\newdimen\rh@wd
\newdimen\rh@hta
\newdimen\rh@htb
\newbox\rh@box
\def\rh@measure#1{\setbox\rh@box=\hbox{$#1$}\rh@wd=\wd\rh@box \rh@hta=\ht\rh@box}
\def\widecheck#1{\rh@measure{#1}%
  \setbox\rh@box=\hbox{$\widehat{\vrule height \rh@hta width\z@ \kern\rh@wd}$}%
  \rh@htb=\ht\rh@box \advance\rh@htb\rh@hta \advance\rh@htb\p@
  \ooalign{$\vrule height \ht\rh@box width\z@ #1$\cr
           \raise\rh@htb\hbox{\scalebox{1}[-1]{\box\rh@box}}\cr}}
\numberwithin{equation}{section}  
\numberwithin{equation}{section}
        \newtheorem{theorem}{Theorem}[section]
        \newtheorem{proposition}[theorem]{Proposition}
        \newtheorem{lemma}[theorem]{Lemma}
        \newtheorem{corollary}[theorem]{Corollary}
        \newtheorem{remark}[theorem]{Remark}  
\let\oldmarginpar\marginpar
\renewcommand\marginpar[1]{\-\oldmarginpar[\raggedleft\footnotesize #1]
{\raggedright\footnotesize #1}}
\newcommand \bei {\begin{itemize}}
\newcommand \eei {\end{itemize}} 
\newcommand \Hcal {\mathcal H}
\newcommand  \rd {{\rm d}}
\newcommand \del \partial
\newcommand \underdel {\underline \partial}
\newcommand \be {\begin{equation}}
\newcommand \bel {\be\label}
\newcommand \ee {\end{equation}}
\newcommand \la \langle
\newcommand \ra \rangle 
\newcommand	\Kcal 	{\mathcal K}
\newcommand	\RR 		{\mathbb R}  
\newcommand  \CC        {\mathbb C}
\newcommand \eps \epsilon
\newcommand \Ecal {\mathcal E}
\begin{document}

\title{Global solution to the cubic Dirac equation in  two space dimensions} 

\author{Shijie Dong$^{\dag}$,\ \ Kuijie Li$^{\ddag,}$\footnote{K. Li is the corresponding author. 
Email: shijiedong1991@hotmail.com, kuijiel@nankai.edu.cn
} \\
\footnotesize \it $^\dag$School of Mathematical Sciences, Fudan University, Shanghai, 200433, China \\
\footnotesize \it $^\ddag$School of Mathematical Sciences, Nankai University, Tianjin, 300071, China
}

\date{\today}

\maketitle

\begin{abstract} 

We are interested in the cubic Dirac equation with mass $m \in [0, 1]$ in two space dimensions, which is also known as the Soler model. We conduct a thorough study on this model with initial data sufficiently small in high regularity Sobolev spaces. First, we show the global existence of the model, which is uniform-in-mass. In addition, we derive a unified pointwise decay result valid for all $m \in [0, 1]$. Last but not least, we prove the cubic Dirac equations scatter linearly with an explicit scattering speed. When the mass $m=0$, we can show an improved pointwise decay result. 
\\

{\bf Keywords:} cubic Dirac equation, global existence and scattering, unified  pointwise decay, hyperboloidal foliation method. \\

{\bf MSC 2010:}  35L05.

\end{abstract} 
\maketitle 

\tableofcontents


\

\section{Introduction}
\label{sec:1}

\paragraph{Model problem}

We are interested in the following nonlinear Dirac equation in $\RR^{1+2}$ 
\begin{equation} 
i\gamma^\mu \del_\mu \psi + m \psi = F(\psi), 
\ee
with prescribed initial data on $t=t_0=2$
\bel{eq:ID-Dirac}
\psi(t_0,x) = \psi_0(x).
\end{equation}
In the above, the spinor $\psi(t,x): \RR^{1+2} \to \mathbb{C}^2$,  the Dirac operator $i\gamma^\mu \del_\mu = i\gamma^0 \del_t + i\gamma^1 \del_1 + i\gamma^2 \del_2$ with $\gamma^\mu$ the Gamma matrices satisfying ($I_2$ the $2\times 2$ identity matrix)
\be  \label{gammamatrice}
\gamma^\mu \gamma^\nu + \gamma^\nu \gamma^\mu = -2 g^{\mu\nu} I_2, 
\qquad (\gamma^\mu)^* = -g_{\mu\nu} \gamma^\nu,
\ee
and $g = \text{diag}(-1, 1, 1)$ is used to denote the Minkowski metric in $\RR^{1+2}$. The mass parameter $m$ is assumed to lie in $[0, 1]$. Here  $\mu, \nu, \cdots \in \{0, 1, 2\}$ represent the space-time indices, and the Einstein summation convention is adopted. 
We will consider  a special case of \eqref{eq:model-Dirac} where $F$ is cubic and has some additional structure. More precisely, we set 
\begin{equation} \label{eq:nonlinearity}
F(\psi) = \begin{cases}
(\psi^* \gamma^0 \psi) \psi, \\
(\psi^*\gamma^0\gamma^{\mu} \psi)\gamma_{\mu} \psi, 
\end{cases}
\end{equation}
which are known as the Soler model \cite{So70} and the Thirring model \cite{Th58}, respectively. Recall that $\psi^*$ represents the complex conjugate transpose of the vector $\psi$ and $\gamma_{\mu} = g_{\mu\nu}\gamma^{\nu}$. In many literature, $\bar{\psi}:=\psi^* \gamma^0$ is used to denote the Dirac adjoint. There is an interesting phenomenon, as observed by the authors in \cite{BoCa16},  that the above two types of nonlinearities in \eqref{eq:nonlinearity} coincide in two space dimensions. {For this reason}, we will focus on the study of the  Dirac equation of Soler type, i.e. 
\begin{align} \label{eq:model-Dirac}
i\gamma^\mu \del_\mu \psi + m \psi  = (\psi^* \gamma^0\psi)\psi, \ \ \   \psi(t_0,x) = \psi_0(x).
\end{align}

The nonlinear Dirac equation has essential applications in relativistic quantum mechanics and models the self-interaction of Dirac fermions. For more information of its physical background, one refers to \cite{Th92}. 

Recall that the following relation holds
\be 
\big( i\gamma^\mu \del_\mu  \big) \big( i\gamma^\nu \del_\nu  \big) = \Box,  \nonumber
\ee
in which $\Box = g^{\alpha\beta} \del_\alpha\del_\beta = -\del_t \del_t + \Delta$ is the wave operator. Applying the Dirac operator $i\gamma^{\nu}\partial_{\nu}$ to both sides of \eqref{eq:model-Dirac}, one can easily find  $\psi$ also solves the following Klein-Gordon(or wave, if $m=0$) equation
\begin{align*}
-\Box \psi + m^2 \psi = m F(\psi) - i\gamma^{\mu}\partial_{\mu} F(\psi).
\end{align*}
Many properties of wave equations such as finite propagation speed and local existence results are also enjoyed by the Dirac equations. On the other hand, in terms of the decay rate, the linear wave solution $u$ and linear Klein-Gordon solution $v$ decay relatively slowly in two space dimensions, indeed, 
\[
|u(t,x)| \lesssim (1+t+|x|)^{-\frac{1}{2}} (1+|t-|x||)^{-\frac{1}{2}}, \qquad |v(t,x)| \lesssim (1+t+|x|)^{-1}.
\]
This indicates that the Dirac solution $\psi$ should decay like $u$ ($m=0$) or  $v$ ($m\neq 0$).

In case of $m=0$, the nonlinear Dirac equation with cubic nonlinearity enjoys a good scaling structure. Particularly, if $\psi$ solves \eqref{eq:model-Dirac}, so does $\psi_{\lambda}(t,x):= \lambda^{\frac{1}{2}} \psi(\lambda t,\lambda x)$. A function space $X$ is called a critical space for Dirac equation provided $ \|\psi_{\lambda}(t_0,x)\|_{X} = \|\psi(t_0,x)\|_{X}$ For instance, the space $\dot{H}^{\frac{d-1}{2}}(\RR^d)$ is a critical space for the $d$-dimensional Dirac equation. Empirically, we expect the Dirac equations to be well-posed(at least locally in time) for data in $H^s$ with $s\geq \frac{d-1}{2}$ and ill-posed if $s<\frac{d-1}{2}$. 

There is an extensive literature on the study of the well-posedness of the Cauchy problem to the Dirac equation. Concerning the low regularity setting,  global well-posedness is showed for small $H^s(\RR^3)$ data with $s>1$ and  positive mass $m>0$ and cubic nonlinearity in \cite{MaNaOz03}.  This result was improved for Dirac equations with critical $H^1(\RR^3)$ radial initial data or data with some additional angular regularity, see  \cite{MaNaNaOz05}. Recently, global well-posedness and scattering are obtained for solutions to the massive Soler model with small data in the critical space $H^1(\RR^3)$ by Bejenaru-Herr \cite{BeHe15}. In the low dimensional case, Pecher \cite{Pe14} proved the local well-posedness for initial data in almost critical space $H^s(\RR^2)$ with $s>1/2$, this result was further extended by Bournaveas and Candy \cite{BoCa16}, where the authors  proved the global existence, uniqueness and scattering for both Soler and Thirring model   with $m=0$ and initial data in critical space $\dot{H}^{\frac{d-1}{2}}(\RR^d)$, where $d=2,3$. 

We are interested in the global existence and long time asymptotic behavior for the solution to the Dirac equation with regular small initial data in two space dimensions. In this direction, the work of
 Escobedo and Vega \cite{EsVe97} proved the well-posedness of $\RR^{1+3}$ Dirac equation with homogeneous nonlinearity $F(\psi)$ of degree $p\geq 3$, in particular, local well-posedness holds for the cubic Dirac equation with data in $H^s(\RR^3), \,s>1$. Further extension was made by Tzvetkov \cite{Tz98}, where he showed the global existence result for $\RR^{1+3}$ massless Dirac equation with small regular Sobolev data and growth condition $|F(\psi)| \lesssim |\psi|^p,\, p>2$. {We also recall the relevant results in \cite{Yin,CaLeMa18,He2020}.} We will work on the $\RR^{1+2}$ case, and our main results are as follows.
 
\paragraph{Main results}

\begin{theorem}[Global existence] \label{thm:mainresult}
Let $N \geq 2 $ be an integer, and consider the Dirac equation \eqref{eq:model-Dirac} with initial data $\psi_0$ on the time slice $t_0 = 2$ supported in the unit ball $\{  x\in \mathbb{R}^2 : |x| \leq 1  \}$. 
Then there exists $\epsilon_0 > 0$, which  in particular is independent of the mass parameter $m\in [0, 1]$, such that, for all $0<\eps \leq \eps_0$ and all initial data satisfying
\be 
\| \psi_0 \|_{H^{N+1}(\RR^2)}
\leq \eps,   \nonumber
\ee
the Cauchy problem \eqref{eq:model-Dirac}--\eqref{eq:ID-Dirac} admits a unique global-in-time solution $\psi$, which
 satisfies the unified decay result
\bel{eq:solution-decay}
|\psi(t, x)| \lesssim {\epsilon \over t^{1/2}(t-r)^{1/2} + mt},
\ee
where $r=|x|$ and $ t\geq t_0.$
\end{theorem}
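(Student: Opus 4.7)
The strategy combines the hyperboloidal foliation method with the first-order Dirac structure and the second-order Klein--Gordon (or wave, when $m=0$) structure it implies. Since $\psi_0$ is supported in $\{|x|\le 1\}$ at $t_0=2$, finite propagation confines the solution to the interior of the forward cone $\{t-r\ge 1\}$, which is foliated by the hyperboloids $\Hcal_s:=\{(t,x):t^2-|x|^2=s^2,\ t>0\}$ with $s\ge s_0:=\sqrt{3}$. I would work on these leaves throughout, in the Klainerman--Sobolev-on-hyperboloids spirit.

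The first step is to observe that $\psi$ simultaneously satisfies the Dirac system \eqref{eq:model-Dirac} and, by composing with $i\gamma^\nu\del_\nu$, the second-order equation
\[
-\Box\psi + m^2\psi = mF(\psi) - i\gamma^\mu\del_\mu F(\psi),
\]
both with cubic right-hand side. I would employ the Lorentz boosts $L_a = x_a\del_t + t\del_a$ together with the translations $\del_\alpha$, which commute with both operators (modulo controllable commutators), and define higher-order energies $E_{N,m}(s,\psi):=\sum_{|I|\le N}E_m(s,Z^I\psi)$ combining the mass-weighted Klein--Gordon energy on $\Hcal_s$ and a Dirac-type flux $\int_{\Hcal_s}\psi^*\bigl(\gamma^0 - (x^a/t)\gamma^a\bigr)\psi\,dx$ that does not degenerate as $m\to 0$.

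The main argument is a bootstrap on an interval $[s_0,s_1]$: assume
\[
E_{N,m}(s,\psi)^{1/2}\le C_1\eps(s/s_0)^{\delta},\qquad E_{N-2,m}(s,\psi)^{1/2}\le C_1\eps,
\]
and improve both constants to $C_1/2$. The pointwise decay \eqref{eq:solution-decay} would follow from two Klainerman--Sobolev-on-hyperboloids inequalities: the wave-type version yields $t^{1/2}(t-r)^{1/2}|\psi|\lesssim E_{N-2,m}^{1/2}$, while the Klein--Gordon-type version yields $mt\,|\psi|\lesssim E_{N-2,m}^{1/2}$; taking the maximum gives the unified bound $\bigl(t^{1/2}(t-r)^{1/2}+mt\bigr)|\psi|\lesssim\eps$. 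The cubic structure then closes the loop: $\|\psi\|_{L^\infty}^2$ decays like $1/(t(t-r)+m^2t^2)\sim 1/(s^2+m^2t^2)$ on $\Hcal_s$, which is $L^1_s$-integrable, so the cubic source injects at most a logarithmic growth into the top-order energy and is harmless at lower order.

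The main obstacle I expect is the uniform-in-$m$ nature of these estimates. When $m\to 0$ the Klein--Gordon energy collapses to the wave energy and loses the $m^2\|\psi\|_{L^2}^2$ control, so the Klein--Gordon pointwise bound degenerates; conversely, the wave bound is weak where $t-r$ is small. The resolution is that the first-order Dirac energy provides a mass-independent $L^2$ flux that, combined with the second-order Klein--Gordon estimates, yields both bounds simultaneously. The algebraic gain from the Soler nonlinearity---namely that $\psi^*\gamma^0\psi$ is a real scalar whose derivatives can be re-expressed via the Dirac equation itself---prevents derivative loss when passing between the first- and second-order formulations. Once the improved decay and energy bounds are in hand, global existence follows from a standard continuation argument.
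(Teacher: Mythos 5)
Your overall framework (hyperboloidal foliation, boosts plus translations, a bootstrap on energies over $\Hcal_s$, Klainerman--Sobolev inequalities on hyperboloids, and a Dirac-type flux to get mass-independent control of $\|(s/t)\psi\|_{L^2}$) coincides with the paper's. The genuine gap is in the step you treat most casually: the estimate of the cubic Soler nonlinearity. Your accounting ``$\|\psi\|_{L^\infty}^2\sim 1/(s^2+m^2t^2)$ is $L^1_s$-integrable, hence at most logarithmic growth'' ignores the weight mismatch built into the hyperboloidal energies. The $L^2$ quantity the energy controls on $\Hcal_s$ is $\|(s/t)\,\partial^I\widehat{L}^J\psi\|_{L^2_f}$, not the unweighted norm, so when you bound $\|(\psi^*\gamma^0\psi)\,\partial^I\widehat{L}^J\psi\|_{L^2_f(\Hcal_\tau)}$ you must control $\|(t/\tau)\,\psi^*\gamma^0\psi\|_{L^\infty(\Hcal_\tau)}$, and since $t/\tau\le\tau$ inside the cone $\Kcal$, the naive pointwise bound $|\psi|\lesssim\epsilon/\tau$ only yields $\epsilon^2\tau^{-1}$, i.e.\ a borderline non-integrable source. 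For a cubic nonlinearity this does not self-improve: if the top-order energy (which also feeds every pointwise bound through the two-boost Sobolev inequality) is allowed to grow like $(\ln s)^k$ or $s^\delta$, rerunning the estimate produces growth of order $(\ln s)^{3k+1}$, respectively $s^{3\delta}$, so no bootstrap of the shape you propose closes. For $m$ bounded away from zero one could trade to the $1/(mt)$ decay, but that destroys exactly the uniformity in $m\in[0,1]$ that the theorem asserts.

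What is missing is the hidden structure of $\psi^*\gamma^0\psi$, which is the crux of the paper's proof (Lemmas \ref{lem:struc} and \ref{modifiedlorentz}): with $(\Psi)_\pm=\Psi\pm(x_a/t)\gamma^0\gamma^a\Psi$ one has $(\Psi)_+^*\gamma^0(\Phi)_+=(s^2/t^2)\,\Psi^*\gamma^0\Phi$, while the Dirac energy controls $\|(\partial^I\widehat{L}^J\psi)_-\|_{L^2_f}$ with no $(s/t)$ weight and the Sobolev argument upgrades this to $|(\partial^I\widehat{L}^J\psi)_-|\lesssim\epsilon\,t^{-1}$. Together these give $\|(t/s)\,\partial^IL^J(\psi^*\gamma^0\psi)\|_{L^\infty(\Hcal_s)}\lesssim\epsilon^2 s^{-2}$ and $\|\partial^IL^J(\psi^*\gamma^0\psi)\|_{L^2_f(\Hcal_s)}\lesssim\epsilon^2 s^{-1}$, hence an integrable $\tau^{-2}$ source, uniformly bounded energies, and a closed bootstrap with constants independent of $m$; your remark that the Soler structure merely ``prevents derivative loss'' misidentifies its role, which is to gain decay. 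Two secondary points: the ordinary boosts $L_a$ do not commute with $i\gamma^\mu\partial_\mu$, so you need Bachelot's modified boosts $\widehat{L}_a=L_a-\tfrac12\gamma^0\gamma^a$ and the interplay of Lemma \ref{modifiedlorentz} between $L^J$ acting on the scalar $\psi^*\gamma^0\psi$ and $\widehat{L}^J$ acting on $\psi$; and the $mt$ part of the unified decay is obtained in the paper directly from the first-order equation in the semi-hyperboloidal frame, solving for $m\psi$ in terms of $(\partial_t\psi)_-$, $t^{-1}L_a\psi$ and the cubic term, which avoids your detour through a mass-weighted Klein--Gordon energy for the squared equation — that route is not unreasonable, but it would still require the structural estimates above to remain uniform as $m\to0$.
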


\begin{remark}
Compared with the previous work, we can obtain the global existence result for \eqref{eq:model-Dirac} which is uniform in terms of the mass parameter $m \in [0, 1]$, and we can also get the decay result \eqref{eq:solution-decay} for the solution which unifies all the cases of $m \in [0, 1]$.
\end{remark}

 From Theorem \ref{thm:mainresult}, we know that the solution to the cubic Dirac equation in two space dimensions \eqref{eq:model-Dirac} exists globally for small localized initial data, and the pointwise decay result holds. On the other hand, as the initial data belongs to the Sobolev space, it is also natural to measure the solution in the same space, especially for its long time behavior. Our following result shows that the solution to the cubic Dirac equation scatters (linearly) as time tends to infinity with a scattering speed.  

\begin{theorem}[Scattering result] \label{thm:scatterresult}
 Let the assumptions in Theorem \ref{thm:mainresult} hold,  then there exists some  $\epsilon_1\leq \epsilon_0$, such that the  global solution $\psi$ associated with $\psi_0$ scatters as $t\to +\infty$, provided  
 \[
 \|\psi_0\|_{H^{N+1}} \leq \epsilon \leq \epsilon_1.
 \]  
 More precisely, there exists some $\psi^+ \in H^{N+1}$ and constant $C$, such that 
 \begin{align} \label{eq:scatterhighorder}
 \|\psi(t) - S(t-t_0) \psi^+\|_{H^{N+1}} \leq Ct^{-1/2}, \ \ \forall\ t \geq t_0.	
 \end{align}
 Furthermore, there exists some $C(t)>0$ with $\lim_{t\to +\infty} C(t) =0$, satisfying 
 \begin{align} \label{eq:scatterloworder}
 	\|\psi(t) - S(t-t_0) \psi^+\|_{H^{N-1}} \leq C(t) t^{-1/2}, \ \ \forall\ t \geq t_0.
 \end{align}
 where $S(t):= e^{it(i\gamma^0\gamma^a \partial_a + m\gamma^0)}$ is the propagator for the linear Dirac equation.
 
  \end{theorem}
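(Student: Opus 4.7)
The plan is to derive scattering from the standard Duhamel representation combined with the pointwise decay \eqref{eq:solution-decay} already established in Theorem \ref{thm:mainresult}. First I would recast \eqref{eq:model-Dirac} in evolutionary form by left-multiplying with $-i\gamma^0$, obtaining $\partial_t\psi = (i\gamma^0\gamma^a\partial_a + m\gamma^0)\psi - i\gamma^0 F(\psi)$ with $F(\psi)=(\psi^*\gamma^0\psi)\psi$, so that Duhamel's formula reads
\[
\psi(t) \;=\; S(t-t_0)\psi_0 \;-\; i\int_{t_0}^{t} S(t-s)\,\gamma^{0}F(\psi(s))\,ds .
\]
The candidate asymptotic state is then $\psi^{+} := \psi_0 - i\int_{t_0}^{\infty} S(t_0-s)\gamma^{0}F(\psi(s))\,ds$, and the remainder becomes $\psi(t)-S(t-t_0)\psi^{+} = i\int_{t}^{\infty} S(t-s)\gamma^{0}F(\psi(s))\,ds$.

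Because the linear Dirac propagator $S(\tau)$ is generated by a self-adjoint operator that commutes with spatial derivatives, it is an isometry on every $H^k(\mathbb{R}^2)$, hence
\[
\bigl\|\psi(t) - S(t-t_0)\psi^{+}\bigr\|_{H^k} \;\leq\; \int_{t}^{\infty}\|F(\psi(s))\|_{H^k}\,ds .
\]
To prove \eqref{eq:scatterhighorder} I would apply a Moser-type product estimate to the cubic $F$ to get $\|F(\psi)\|_{H^{N+1}}\lesssim \|\psi\|_{L^\infty}^2\|\psi\|_{H^{N+1}}$. By the unified pointwise decay \eqref{eq:solution-decay} we have $\|\psi(s)\|_{L^\infty}^2\lesssim \epsilon^2 s^{-1}$ uniformly in $m\in[0,1]$, while the top-order Sobolev norm $\|\psi(s)\|_{H^{N+1}}$ is, by the energy estimates underlying Theorem \ref{thm:mainresult}, at worst slowly growing in $s$ (tamed by the smallness of $\epsilon$). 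This produces an integrand of order $s^{-3/2}$ up to a factor controlled by $\epsilon$, whose tail integral from $t$ to $\infty$ is precisely of order $t^{-1/2}$, yielding \eqref{eq:scatterhighorder} together with the convergence of $\psi^{+}$ in $H^{N+1}$.

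For the improved bound \eqref{eq:scatterloworder} in the lower norm $H^{N-1}$, I would exploit the fact that losing two derivatives in the energy hierarchy removes the slow growth factor, so $\|\psi(s)\|_{H^{N-1}}$ is uniformly bounded in $s$. Combining this with the trilinear estimate $\|F(\psi)\|_{H^{N-1}}\lesssim \|\psi\|_{L^\infty}^2\|\psi\|_{H^{N-1}}$ and the pointwise bound gives a uniform integrable tail. To extract the extra factor $C(t)\to 0$, I would split the integration interval $[t,\infty)$ into $[t,2t]$ and $[2t,\infty)$ and use that on the far tail the factor $\|\psi\|_{L^\infty}^2$ decays strictly faster than $s^{-1}$ on average (or, equivalently, apply dominated convergence to the integral representation of the remainder after interpolation with \eqref{eq:scatterhighorder}).

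The main obstacle is the critical $t^{-1/2}$ pointwise decay rate in two space dimensions: the trilinear bound is borderline, so any logarithmic growth of the top-order energy must be absorbed by the two $L^\infty$ factors, and the hyperboloidal foliation estimates from the proof of Theorem \ref{thm:mainresult} must be translated back to flat time slices without losing the sharp rate. Ensuring that this translation preserves integrability uniformly in the mass parameter $m\in[0,1]$ — so that the massless and massive cases are handled simultaneously — is the delicate point of the argument.
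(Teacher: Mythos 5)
Your overall skeleton (Duhamel formula, unitarity of $S(t)$ on $H^k$, defining $\psi^+$ by an improper integral, and reducing scattering to $\int_t^\infty\|F(\psi)(\tau)\|_{H^s}\,\rd\tau$) is exactly the paper's Theorem \ref{thm:diracscatter}. The genuine gap is in the quantitative step: from $\|F(\psi)\|_{H^{N+1}}\lesssim\|\psi\|_{L^\infty}^2\|\psi\|_{H^{N+1}}$ and $\|\psi(s)\|_{L^\infty}^2\lesssim\epsilon^2 s^{-1}$ you obtain an integrand of size $s^{-1}\|\psi(s)\|_{H^{N+1}}$, not $s^{-3/2}$; even if the top-order norm were uniformly bounded, $\int_t^\infty s^{-1}\,\rd s$ diverges, and no smallness of $\epsilon$ repairs a divergent tail. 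The unified decay \eqref{eq:solution-decay} only yields $s^{-1/2}$ per factor near the light cone (where $t-r\sim 1$), uniformly in $m\in[0,1]$, so the crude trilinear bound is genuinely insufficient — this is precisely the borderline you flag at the end, but flagging it does not supply the mechanism that beats it. A second, related gap: the uniform flat-slice bounds on $\|\psi(t)\|_{H^{N+1}}$ (and $H^{N-1}$) that you invoke are not consequences of Theorem \ref{thm:mainresult}, whose energies live on hyperboloids; they must themselves be proved on constant-time slices, and doing so runs into the same borderline $\tau^{-1}$ problem.

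The paper closes both gaps with two structural inputs. First, the hidden structure of the quadratic factor: by Lemma \ref{lem:struc} and Lemma \ref{lem:linftyestimate} one has the flat-slice bound \eqref{nullterm1}, $|\partial^I L^J(\psi^*\gamma^0\psi)|\lesssim(C_1\epsilon)^2\tau^{-3/2}(\tau-r)^{-1/2}$ for $|I|+|J|\le N-1$, a full half power better than $\|\psi\|_{L^\infty}^2$; this handles every term in which the quadratic factor carries at most $N-1$ derivatives and produces the integrable $\tau^{-3/2}$ you wanted. Second, when $N+1$ derivatives fall on the quadratic factor one cannot use $L^\infty$ on it; here the paper combines Alinhac's ghost weight energy (Proposition \ref{prop:ghostenergybound}) with the algebraic identity $[\Psi]_+^*\gamma^0[\Phi]_+=0$ of Lemma \ref{lem:ghostdecomp}, so the highest-order factor always appears as $[\partial^{I_3}\psi]_-$ and is controlled through the spacetime quantity $\int_{t_0}^t\big\|[\partial^{I_3}\psi]_-/\langle\tau-r\rangle\big\|_{L_x^2}^2\,\rd\tau$; Cauchy--Schwarz in $\tau$ then converts the remaining $\tau^{-1}$ prefactor into the stated $t^{-1/2}$ tail. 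The same ghost-weight bootstrap (Proposition \ref{prop:ghostenergy}) is what delivers the uniform flat-slice energy $\Ecal(t)\lesssim\epsilon^2$ you assumed. Finally, the $H^{N-1}$ refinement in the paper is not an interpolation or a splitting of $[t,2t]\cup[2t,\infty)$: at low order every term can be organized so that only the ghost-weight factor appears, and $C(t)$ is exactly the tail of the convergent spacetime integral, which tends to zero. Without the null structure and the ghost-weight spacetime norm, your argument cannot produce a convergent tail, so as written the proof does not go through.
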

  
  \begin{remark}
  	In contrast with the scattering result in \cite{BoCa16}, we obtain additionally the explicit scattering speed, and our result applies to both massive and massless  Dirac equations.
  \end{remark}
In the case $m=0$, as observed by Bournaveas in \cite{Bo01}, the Dirac solution $\psi$ can be reformulated as the Dirac operator acting on some linear wave solution $\Psi$ by setting $i\gamma^{\mu}\partial_{\mu} \Psi = \psi$. One advantage for treating the wave solution $\Psi$ in our analysis is that a conformal energy estimate is available so that we can get a better decay result for the differentiated wave solution $\partial \Psi$ and hence $\psi$. Such observation motivates us to improve the $(t-r)$-decay for the Dirac solution, and this is indeed the case, at the cost of some logarithmic growth in time $t$. We remark that the following result improves the one in Theorem \ref{thm:mainresult} significantly when $r\leq t/2$ for instance.

\begin{theorem} \label{thm:improveddecay}
Let $m=0$ and let the assumptions in Theorem \ref{thm:mainresult} hold. Suppose that $\psi$ is the global solution to Dirac equation \eqref{eq:model-Dirac} given by Theorem \ref{thm:mainresult}, then
 	\begin{align*}
		|\psi(t,x)| \lesssim  \min \Big\{\frac{1}{t^{1/2}(t-r)^{1/2}}, \, \frac{(\ln t)^2 }{t^{1/2}(t-r)^{3/2}} \Big\} \epsilon.
	\end{align*}  
\end{theorem}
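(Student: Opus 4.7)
The plan is to exploit Bournaveas's observation that in the massless case the Dirac spinor can be written as a Dirac derivative of an auxiliary wave field, and then to deploy the conformal (Morawetz) energy identity to improve the $(t-r)$-decay beyond what Theorem \ref{thm:mainresult} provides. Concretely, I would introduce a spinor-valued wave field $\Psi$ solving
\[
\Box \Psi = F(\psi), \qquad \Psi(t_0,\cdot) = 0,
\]
with $\partial_t \Psi(t_0,\cdot)$ chosen as the appropriate constant-matrix multiple of $\psi_0$ so that $(i\gamma^\mu \partial_\mu \Psi)(t_0,\cdot) = \psi_0$. Using $(i\gamma^\mu\partial_\mu)^2 = \Box$ together with $m = 0$, the difference $\chi := i\gamma^\mu \partial_\mu \Psi - \psi$ satisfies the homogeneous Dirac equation with vanishing initial data, hence $\chi \equiv 0$ and $\psi = i\gamma^\mu\partial_\mu \Psi$. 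Improving the pointwise decay of $\psi$ thus reduces to improving that of $\partial \Psi$.

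Next, I would run the standard 2D conformal energy identity for the wave equation on $\Psi$ and on its Klainerman vector-field derivatives $\Gamma^I \Psi$, where $\Gamma$ ranges over translations $\partial_\mu$, the rotation $\Omega = x^1 \partial_2 - x^2 \partial_1$, the Lorentz boosts $L_a = x^a \partial_t + t \partial_a$, and the scaling $S = t \partial_t + x^a \partial_a$. Using the Morawetz multiplier $K_0 = (t^2 + r^2)\partial_t + 2tr \partial_r$ applied to $\Gamma^I \Psi$ produces, after a Gr\"onwall step, a weighted $L^2$ bound whose growth is controlled by the spacetime integral $\int_{t_0}^t \|(s+r)\Gamma^I F(\psi)(s)\|_{L^2}\, \rd s$. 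Putting two factors of $\psi$ in $L^\infty$ via the decay $|\psi| \lesssim \epsilon/[t^{1/2}(t-r)^{1/2}]$ of Theorem \ref{thm:mainresult} and the remaining factor of $\Gamma^I \psi$ in $L^2$ via the energy control established in its proof, one expects the source integral to be bounded by $\epsilon^3 \ln t$, so that the conformal energy of $\Psi$ grows at most logarithmically.

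Finally, a Klainerman--Sobolev inequality adapted to the conformal weight converts this weighted $L^2$ bound into the pointwise estimate
\[
|\partial \Psi(t,x)| \lesssim \frac{C(t)\,\epsilon}{t^{1/2}(t-r)^{3/2}},
\]
with $C(t) \lesssim (\ln t)^2$ accounting both for the logarithmic growth of the conformal energy and for the additional logarithmic loss intrinsic to the Sobolev step in two space dimensions. Since $\psi$ is a linear combination of components of $\partial \Psi$, the same bound transfers to $|\psi|$, and minimizing with the estimate of Theorem \ref{thm:mainresult} yields the claim. I expect the main obstacle to be producing the $\ln t$ source bound uniformly in the conformal weight: this requires carefully splitting $\RR^2$ into the interior region $\{r < t/2\}$, where the sharp $(t-r)$-decay from Theorem \ref{thm:mainresult} is directly available, and a neighborhood of the light cone $\{r \sim t\}$, where one must trade pointwise decay against the $L^2$ control of the differentiated spinor in order to recover an integrable $s^{-1}$ factor even after the weight $(s+r)$ has been absorbed.
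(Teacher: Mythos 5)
Your first reduction is exactly the paper's: setting $\Box\Psi=(\psi^*\gamma^0\psi)\psi$ with data chosen so that $i\gamma^\mu\partial_\mu\Psi=\psi$, and aiming for a logarithmically growing conformal energy of $\Psi$ that a weighted Sobolev step converts into the extra $(t-r)^{-1}$ decay. The gap is in the middle step. You run the conformal/Morawetz estimate on constant-time slices with the multiplier $K_0=(t^2+r^2)\partial_t+2tr\partial_r$ and claim the source integral $\int_{t_0}^{t}\|(\tau+r)\,\Gamma^I F(\psi)(\tau)\|_{L^2}\,\rd\tau$ is $O(\epsilon^3\ln t)$. The estimates actually available from Theorem \ref{thm:mainresult} and its proof do not give this near the light cone. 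On a flat slice the weight is $(\tau+r)\sim\tau$, and even after exploiting the hidden structure the best pointwise bound on the quadratic factor is $|\partial^IL^J(\psi^*\gamma^0\psi)|\lesssim\epsilon^2\tau^{-3/2}\langle\tau-r\rangle^{-1/2}$ (this is \eqref{nullterm1}); near the cone the factor $\langle\tau-r\rangle^{-1/2}$ buys nothing, so putting the top-order spinor in $L^2$ yields an integrand of size $\epsilon^3\tau^{-1/2}$, hence growth $t^{1/2}$. Trading instead against the ghost-weight norm, the top-order contribution is $\epsilon^2\big\|[\partial^IL^J\psi]_-/\langle\tau-r\rangle\big\|_{L^2}$ after the weight $\tau$ is absorbed, and since one only controls $\int\|\cdot\|_{L^2}^2\,\rd\tau\lesssim\epsilon^2$, Cauchy--Schwarz again gives $O(\epsilon^3 t^{1/2})$, not $\ln t$. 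Your proposed splitting handles only the interior $r<t/2$; the exterior is precisely where the scheme fails, so the claimed logarithmic bound on the flat conformal energy is unsupported, and a $t^{1/2}$ growth would not yield the stated $(t-r)^{-3/2}$ decay. A secondary problem: commuting the scaling field $S$ puts $L_0\psi$ inside $\Gamma^I F(\psi)$, and no estimates for $L_0\psi$ were ever established — the framework of Theorem \ref{thm:mainresult} is built to avoid $L_0$ altogether.

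The paper avoids both issues by staying on hyperboloids. There the cubic term obeys $\|\partial^IL^J[(\psi^*\gamma^0\psi)\psi]\|_{L_f^2(\Hcal_\tau)}\lesssim\epsilon^3\tau^{-2}$ (the $s/t$ weights built into the hyperboloidal energy substitute for the missing $(t-r)$-decay near the cone), so the weighted source in the Ma--Huang conformal inequality (Proposition \ref{prop:conformalenergy}) is $\epsilon^3\tau^{-1}$ and $E_{con}^{1/2}(s,\partial^IL^J\Psi)\lesssim\epsilon\ln s$. Moreover, $L_0\Psi$ is never obtained by commutation: it is read off from the conformal energy itself, writing $L_0=s\partial_s+x^a\overline{\partial}_a$ and using the multiplier field $K=s\partial_s+2x^a\overline{\partial}_a$ (Corollary \ref{cor:l2estimate}), after which the $(t-r)$ gain follows from $|t-r|\,|\partial\Psi|\lesssim|L_0\Psi|+\sum_a|L_a\Psi|+|\Omega_{12}\Psi|$ together with $s\sim t^{1/2}(t-r)^{1/2}$. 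If you want to keep the flat-slice conformal multiplier, you would need a genuinely new mechanism to handle the region $r\sim t$; as written, the central $\ln t$ claim does not follow from the available bounds.
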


We note that the main contribution of Theorem \ref{thm:improveddecay} is that we get a pointwise decay for the massless cubic Dirac equation with an improved homogeneity (the total power of the decay rate), i.e., from $-1$ in Theorem \ref{thm:mainresult} to almost $-2$ in Theorem \ref{thm:improveddecay}. There exist analogous studies on improving the homogeneity of the pointwise decay of the Dirac equation, which are also an interesting and challenging subject; see, for instance, the work \cite{MaZh20} and the references therein.


\paragraph{Strategy of proof}

We now very briefly demonstrate the key ideas used to show the main theorems above, and one finds more details in the proofs.

We will rely on the hyperboloidal foliation of the space-time to prove the existence of a global solution and the unified pointwise time-decay result.
The advantage is that the scaling vector field $L_0 = t\del_t + x^a\del_a$ can be avoided in the analysis, and thus we can treat all the cases $m\in [0, 1]$ in one framework. Furthermore we can get a mass-dependent decay result as well as a wave decay result with the implicit constant independent of the mass.

In general, cubic nonlinearities cannot guarantee the small data global existence result for two-dimensional wave equations. However, we will explore some special structures of the nonlinearities so that we can get enough decay to close the bootstrap argument.

Concerning the long time scattering result, we come back to the constant time foliation and make use of the classical ghost weight energy method introduced by Alinhac in \cite{Alinhac1}. In order to close the estimate, the pointwise decay result derived using the hyperboloidal foliation method is fully used.

\paragraph{Outline} The rest of the paper is organized as follows. In Section \ref{sec:intro}, we mainly present some preliminaries on the hyperboloidal foliation method, recall the energy estimates of the Dirac equation on hyperboloids and show the hidden structure of the nonlinear term.  Section \ref{sec:conclusion} is devoted to proving the global existence and the pointwise decay. We demonstrate the scattering result in Section \ref{ch:scattering}.  Finally, the proof of Theorem \ref{thm:improveddecay} is put in the last section.

\section{Preliminaries} \label{sec:intro}

\subsection{Notations}

We adopt the hyperboloidal foliation method \cite{LeMa14,LeMa16} by LeFloch-Ma, which takes its root in \cite{Kl85} by Klainerman and \cite{Ho97} by H\"ormander, to prove the main result. We first briefly recall some notations from \cite{LeMa14,LeMa16}.  We denote a space-time point in $\RR^{1+2}$ by $(t,x)=(x_0, x_1, x_2)$ with $t=x_0, \, x=(x_1,x_2)$, $x^a=x_a,\, a=1,2$, and its spacial radius is denoted by $r:=|x|=\sqrt{x_1^2 + x_2^2}$. Following the notations in \cite{Klainerman80}, we use
$$
\partial_{\alpha}:=\partial_{x_{\alpha}}, \ \ L_a:=t\partial_a + x_a\partial_t, \ \  \Omega_{12}:=x_1\partial_2-x_2\partial_1,\ \ \textrm{and}\ \ L_0:= t\partial_t + x_a \partial_a 
$$
to represent the translations, the Lorentz boosts, the rotation and the scaling vector field respectively. Einstein summation convention over repeated indices is adopted, e.g. $x^a \partial_a = x^1\partial_1+x^2\partial_2$. In the sequel, Roman letters indices $a, b \in \{1,2\}$, Greek letters indices $\alpha, \beta \in \{0, 1, 2\}$, if there is no otherwise indications. As usual, given a vector or a scalar $w$ we use Japanese bracket to denote $\langle w \rangle := (1+|w|^2)^{1/2}$.

We remark that the Lorentz boosts $L_a$ do not directly commute with the Dirac operator $i\gamma^{\mu} \partial_{\mu}$. Following \cite{Ba88} by Bachelot, we introduce the modified Lorentz boosts:
\begin{align*}
\widehat{L}_a := L_a -\frac{1}{2} \gamma^0\gamma^a.
\end{align*}
The key feature of $\widehat{L}_a$ is that it commutes with the Dirac operator, i.e.
\[
[\widehat{L}_a, i\gamma^{\mu}\partial_{\mu}] =0,
\]
where we use the standard notation for the commutator $[A,B]:= AB-BA$.

From now on, we consider functions supported in  the interior of the  light cone $\Kcal:= \{(t,x):t\geq 2, |x|<t-1 \}$. A hyperboloid $\Hcal_s$ with hyperbolic time $s\geq s_0 =2$ is defined as 
\[
\Hcal_s:= \{(t,x): t^2 = |x|^2 + s^2\}. 
\]
It is not hard to find that for any point $(t,x)\in \Kcal \cap \Hcal_s$ with $s\geq 2$, we have 
\[
|x| \leq t, \ \ \ \ s\leq t\leq s^2.
\]
Also we use $\Kcal_{[s_0,s_1]}: = \bigcup_{s_0\leq s\leq s_1} \Hcal_{s}\cap \Kcal$ to denote the space-time region between two hyperboloids $\Hcal_{s_0}, \Hcal_{s_1}$.
For a (vector-valued) function $f$ defined in $\RR^{1+2}$, we denote 
\[
\|f\|^p_{L^p_f(\Hcal_s)} =\int_{\Hcal_s} |f|^p\, {\rm d}x := \int_{\RR^2} |f(\sqrt{s^2+|x|^2},x)|^p\,{\rm d}x, \qquad  1\leq p<\infty.
\]
Following the work of LeFloch and Ma \cite{LeMa14}, we introduce the semi-hyperboloidal frame
\[
\underline{\partial}_0 : =\partial_t, \ \ \quad \underline{\partial}_a: = \frac{L_{a}}{t}=\frac{x_a}{t} \partial_t + \partial_a.
\]
It is worth mentioning that $\{\underline{\partial}_1,\underline{\partial}_2\}$ generates the tangent space to the hyperboloid $\Hcal_s$.

Throughout this paper, $C$ denotes a generic constant, which may be different from line to line. As usual, $A \lesssim B$ means that $A \leq C B$ for some constant $C$, while $A\sim B$ means $A \lesssim B$ and $B\lesssim A$. We use $\mathcal{F}$ and $\mathcal{F}^{-1}$ to denote the Fourier transform and inverse Fourier transform respectively. 
We also denote the following  ordered set by
$$
\{\Gamma_i\}_{i=1}^7:= \big\{\partial_0,\partial_1,\partial_2, L_1, L_2,\widehat{L}_1, \widehat{L}_2 \big\}.
$$

For an arbitrary multi-index $I=(\alpha_1,\alpha_2,\alpha_3)$ of length $|I|=\alpha_1 + \alpha_2 + \alpha_3=n$, we use $\partial^I:=\Gamma_1^{\alpha_1} \Gamma_2^{\alpha_2} \Gamma_3^{\alpha_3}$ to denote the $n$-th order derivatives. Similarly we represent $L^{J}:= \Gamma_4^{\beta_1} \Gamma_5^{\beta_2}$ as well as $\widehat{L}^{J}:= \Gamma_6^{\beta_1} \Gamma_7^{\beta_2}$, where $J = (\beta_1, \beta_2)$.


\subsection{Energy estimates for Dirac field on hyperboloids}

Let us recall the energy estimates for the Dirac field on hyperboloids $\Hcal_s$, which was first introduced in \cite{DoLeWy21} (see Section 2.3). Given a complex-valued function $\psi(t,x):\RR^{1+2} \to \mathbb{C}^2$ on $\Hcal_s$, we define the following energy functional
\begin{align*}
\Ecal^{D}(s,\psi):&= \int_{\Hcal_s} \Big(\psi^*\psi-\frac{x_a}{t} \psi^* \gamma^0\gamma^a \psi \Big)\, \rd x,  \\
\Ecal^{+}(s,\psi):&= \int_{\Hcal_s} \Big(\psi- \frac{x_a}{t} \gamma^0\gamma^a \psi \Big)^* \Big(\psi- \frac{x_a}{t} \gamma^0 \gamma^a \psi  \Big)\,\rd x.
\end{align*}
An important relation  between $\Ecal^D(s,\psi)$ and $\Ecal^+(s,\psi)$ was found in \cite{DoLeWy21} (see Proposition 2.3),  which reads
\begin{align*}
\Ecal^D(s, \psi)= \frac{1}{2} \int_{\Hcal_s} \frac{s^2}{t^2} \psi^* \psi\, \rd x + \frac{1}{2} \Ecal^+(s,\psi).
\end{align*} 
Hence, the positivity of the energy functional $\Ecal^{D}(s,\psi)$ follows as a direct result and 
\begin{align} \label{energybound}
\Big\|\frac{s}{t} \psi \Big\|_{L_f^2(\Hcal_s)}+ \Big\|\Big( I_2- \frac{x_a}{t}\gamma^0\gamma^a\Big) \psi \Big\|_{L_f^2(\Hcal_s)} \leq 4\, \Ecal^D(s,\psi)^{\frac{1}{2}}.
\end{align}
Finally, let us illustrate the energy estimates (see \cite{DoLeWy21}, Proposition 2.4), which shall play a crucial role in our proof.
\begin{proposition} \label{boundsofenergy}
Let $\psi(t, x): \RR^{1+2} \to \mathbb{C}^2$ be a sufficiently regular function with support in the region $\Kcal_{[s_0,s_1]}$ and $m\in \RR$ be arbitrary. Then for all $s\in [s_0,s_1]$, we have 
\begin{align*}
\Ecal^{D}(s,\psi)^{1/2} \leq \Ecal^D(s_0,\psi)^{1/2} + \int_{s_0}^s \|i\gamma^{\mu}\partial_{\mu} \psi + m\psi\|_{L_f^2(\Hcal_{\tau})} \,\rd \tau.
\end{align*}
\end{proposition}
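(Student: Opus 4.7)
The plan is to run a standard multiplier/divergence-theorem argument adapted to the hyperboloidal foliation. The key auxiliary object is the current $J^\mu := \psi^* \gamma^0 \gamma^\mu \psi$: its $\mu{=}0$ component equals $\psi^*\psi$, and its spatial components combine with the weight $x_a/t$ to reproduce exactly the integrand of $\Ecal^D(s,\psi)$. I will proceed in three stages: derive a pointwise identity for $\del_\mu J^\mu$, integrate it over the slab $\Kcal_{[s_0,s]}$ via the divergence theorem, and convert the resulting bulk term into a differential inequality for $\Ecal^D(s,\psi)^{1/2}$.

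Setting $F := i\gamma^\mu\del_\mu\psi + m\psi$, I will take the adjoint of the Dirac equation and use $(\gamma^\mu)^* = -g_{\mu\nu}\gamma^\nu$ together with the anticommutation $\gamma^a \gamma^0 = -\gamma^0 \gamma^a$; multiplying the adjoint equation on the right by $\gamma^0$ gives $(\del_\mu\psi)^*\gamma^0\gamma^\mu = iF^*\gamma^0 - im\psi^*\gamma^0$, while the equation itself gives $\psi^*\gamma^0\gamma^\mu\del_\mu\psi = -i\psi^*\gamma^0 F + im\psi^*\gamma^0\psi$. Summing, the mass terms cancel exactly and one arrives at the mass-independent identity
\[
\del_\mu J^\mu \;=\; i\bigl(F^*\gamma^0\psi - \psi^*\gamma^0 F\bigr) \;=\; 2\,\mathrm{Im}\bigl(\psi^*\gamma^0 F\bigr).
\]
This cancellation of $m$ is precisely why the estimate will hold uniformly in $m\in[0,1]$.

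Next, I will apply the Euclidean divergence theorem on $\Kcal_{[s_0,s]}$. The lateral boundary $\{|x|=t-1\}$ contributes nothing by the support assumption. Parametrizing $\Hcal_\tau$ by $x$ and computing the outward Euclidean conormal produces exactly $\int_{\Hcal_\tau}\bigl(J^0 - (x_a/t)J^a\bigr)\,\rd x = \Ecal^D(\tau,\psi)$ as the flux across the upper slice, with the negative of the corresponding expression on $\Hcal_{s_0}$. Converting the bulk integral to hyperboloidal coordinates via $\rd t\,\rd x = (\tau/t)\,\rd\tau\,\rd x$ and differentiating in $s$ yields
\[
\frac{\rd}{\rd s}\Ecal^D(s,\psi) \;=\; 2\int_{\Hcal_s}\tfrac{s}{t}\,\mathrm{Im}\bigl(\psi^*\gamma^0 F\bigr)\,\rd x.
\]

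To close the argument, I will bound the right-hand side using $|\psi^*\gamma^0 F|\le|\psi||F|$ and Cauchy--Schwarz, invoking \eqref{energybound} to dominate $\|(s/t)\psi\|_{L^2_f(\Hcal_s)}$ by $\Ecal^D(s,\psi)^{1/2}$; this produces $\tfrac{\rd}{\rd s}\Ecal^D(s,\psi)^{1/2}\lesssim \|F\|_{L^2_f(\Hcal_s)}$, which integrates from $s_0$ to $s$ to the claimed inequality. The main bookkeeping subtlety I anticipate is sharpening the constant to $1$: the naive Cauchy--Schwarz loses a factor of $\sqrt{2}$ because $\tfrac{1}{2}\|(s/t)\psi\|^2_{L^2_f}\le\Ecal^D$, and recovering the sharp version requires applying Cauchy--Schwarz directly inside the identity of Proposition~2.3, using the complementary null component $\|(I_2-(x_a/t)\gamma^0\gamma^a)\psi\|_{L^2_f(\Hcal_s)}$ to absorb the remaining part of $\psi^*\gamma^0 F$.
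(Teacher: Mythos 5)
Your argument is essentially the standard (and surely the intended) proof: the paper itself does not prove this proposition but quotes it from \cite{DoLeWy21} (Proposition 2.4), and the route you take --- the current $J^\mu=\psi^*\gamma^0\gamma^\mu\psi$, the identity $\del_\mu J^\mu=2\,\mathrm{Im}(\psi^*\gamma^0F)$ with the exact cancellation of the mass terms (this is precisely the ``delicate cancellation'' alluded to in the paper's remark), the vanishing lateral flux, the hyperboloidal flux $\int_{\Hcal_\tau}\bigl(J^0-(x_a/t)J^a\bigr)\rd x=\Ecal^D(\tau,\psi)$, and the Jacobian $\rd t\,\rd x=(\tau/t)\rd\tau\,\rd x$ --- is all correct and yields the stated estimate up to a harmless constant, which is all that is ever used downstream (every application is through $\lesssim$).

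The one point to flag is your final sharpening step. The weighted Cauchy--Schwarz ``inside the identity of Proposition 2.3'' does not actually recover the constant $1$: writing $A:=I_2-\tfrac{x_a}{t}\gamma^0\gamma^a$ (so $\Ecal^D=\int\psi^*A\psi\,\rd x$, $A$ Hermitian with eigenvalues $1\pm r/t$), one has
\begin{equation*}
\frac{s}{t}\bigl|\psi^*\gamma^0F\bigr|\le |A^{1/2}\psi|\cdot\frac{s}{t}\bigl|A^{-1/2}\gamma^0F\bigr|,\qquad \frac{s^2}{t^2}\,\gamma^0A^{-1}\gamma^0=A,
\end{equation*}
so the second factor is $\bigl(F^*AF\bigr)^{1/2}\le\sqrt{1+r/t}\,|F|$, and near the light cone this only gives $\sqrt{2}$, the same loss as the naive estimate; indeed $\|\tfrac{s}{t}\psi\|_{L^2_f}\le\sqrt{2}\,\Ecal^D(s,\psi)^{1/2}$ is saturated by spinors aligned with the $+r/t$ eigenvector of $\tfrac{x_a}{t}\gamma^0\gamma^a$ concentrated near $r\approx t$. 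So, as written, your proof delivers
\begin{equation*}
\Ecal^{D}(s,\psi)^{1/2} \leq \Ecal^D(s_0,\psi)^{1/2} + \sqrt{2}\int_{s_0}^s \|i\gamma^{\mu}\partial_{\mu} \psi + m\psi\|_{L_f^2(\Hcal_{\tau})} \,\rd \tau,
\end{equation*}
not the literal constant $1$ of the statement; if you want to insist on the sharp constant you would need a different mechanism (exploiting that $F$ is tied to $\psi$, not an arbitrary source), but for the purposes of this paper the constant is irrelevant. Also, when converting $\tfrac{\rd}{\rd s}\Ecal^D\le 2C\,\Ecal^{D,1/2}\|F\|$ into a bound on $\tfrac{\rd}{\rd s}\Ecal^{D,1/2}$, guard against $\Ecal^D=0$ (e.g.\ work with $(\Ecal^D+\delta)^{1/2}$ and let $\delta\to0$); this is routine but worth a line.
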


\begin{remark}
It should be noted that $'m'$ does not appear in the formulation of $\Ecal^D(s,\psi)$. Indeed, there is a delicate cancellation in the derivation of this energy estimates.
\end{remark}

\subsection{Estimates for commutators and Sobolev inequalities}
The following commutator estimates will be frequently used and the proof is straightforward, for instance one can refer to \cite{LeMa14}, Chapter 3.
\begin{lemma} \label{lem:commu}
Let $\Phi, \phi$ be a sufficiently regular $\CC^2$-valued (resp. $\RR$-valued) function supported in the region $\mathcal{K}$. Then, for any multi-indices $I,J$, there exist generic constants $C=C(|I|, |J|)>0$ such that
$$
\aligned
&\big| [\del_\alpha, L_a] \Phi \big| + \big| [\del_\alpha, \widehat{L}_a] \Phi \big|
\leq C |\del \Phi|,
\\
&\big| [L_a, L_b] \Phi  \big| + \big| [\widehat{L}_a, \widehat{L}_b] \Phi  \big|
\leq C \sum_c |L_c \Phi|,
\\
& \big| [\del^I L^J, \del_\alpha] \phi \big| 
\leq 
C \sum_{|J'|<|J|} \sum_\beta \big|\del_\beta \del^I L^{J'} \phi \big|,
\\
& \big| [\del^I L^J, \underdel_a] \phi \big| 
\leq 
C \Big( \sum_{| I' |<| I |, | J' |< | J |} \sum_b \big|\underdel_b \del^{I'} L^{J'} \phi \big| + t^{-1} \sum_{| I' |\leq | I |, |J'|\leq |J|} \big| \del^{I'} L^{J'} \phi \big| \Big).
\endaligned
$$
Recall that Greek indices $\alpha, \beta \in \{0,1,2\}$ and Roman indices $a,b \in \{1,2\}$. 
\end{lemma}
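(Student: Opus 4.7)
The four inequalities will follow from direct calculations of the base commutators, extended to higher order by an induction on the length of the multi-indices. Since every vector field in the list $\{\partial_\alpha, L_a, \widehat L_a\}$ commutes with multiplication by the constant matrices $\gamma^0\gamma^a$, I will first dispose of the $\widehat L_a$ versions by reducing them to the $L_a$ versions up to terms of the form $C\,|\Phi|$, which are themselves absorbed into the stated right-hand sides on the support inside $\Kcal$ (using either $|\Phi|\lesssim |L_c\Phi|$ up to a constant matrix shift, or the second part of bound 4 where a $\Phi$-term is already present).

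For the first two estimates I plan to compute the commutators explicitly. The Leibniz rule gives
\[
[\partial_\alpha,L_a] \Phi = (\partial_\alpha t)\,\partial_a \Phi + (\partial_\alpha x_a)\,\partial_t \Phi = \delta_\alpha^{\,0}\partial_a\Phi + \delta_\alpha^{\,a}\partial_t\Phi,
\]
which is clearly bounded by $C|\partial\Phi|$, and the same answer persists for $\widehat L_a$ because $\widehat L_a-L_a=-\tfrac12\gamma^0\gamma^a$ is a constant matrix. For the boost commutators a direct expansion yields $[L_a,L_b]\Phi = x_a\partial_b\Phi - x_b\partial_a\Phi$, and then the algebraic identity $x_a L_b - x_b L_a = t(x_a\partial_b-x_b\partial_a)$ (a simple rewriting) allows me to replace the bare $x_a\partial_b$ by $(x_a L_b - x_b L_a)/t$. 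On the support $\Kcal$ we have $|x_a|/t\le 1$, so $|[L_a,L_b]\Phi| \lesssim \sum_c |L_c\Phi|$. The $\widehat L$-version generates extra constant-matrix terms coming from $\widehat L_a\widehat L_b-\widehat L_b\widehat L_a$, which are absorbed as noted above.

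For the third estimate I plan an induction on $|J|$, writing $L^J = L_{a_1}\cdots L_{a_{|J|}}$ and using the telescoping identity
\[
[\partial^I L^J, \partial_\alpha] = \sum_{j=1}^{|J|} \partial^I L_{a_1}\cdots L_{a_{j-1}}\,[L_{a_j},\partial_\alpha]\,L_{a_{j+1}}\cdots L_{a_{|J|}}
\]
(the $\partial^I$ block contributes nothing since partial derivatives commute among themselves). Inserting the first inequality, each summand is of the form $\pm\partial^I L_{a_1}\cdots L_{a_{j-1}}\partial_\beta L_{a_{j+1}}\cdots L_{a_{|J|}} \phi$; pushing the $\partial_\beta$ past the remaining $L$'s by a second inductive pass on shorter $J$ produces exactly terms $\partial_\beta \partial^I L^{J'}\phi$ with $|J'|<|J|$, which is the claimed bound.

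For the fourth estimate (the main technical point), I will proceed by substituting $\underdel_a = L_a/t$ and tracking the two sources of error: commuting the vector fields past each other (handled by the previous step) and commuting them past the scalar factor $1/t$. The key identities are $\partial_\alpha(1/t) = -\delta_\alpha^{\,0}/t^2$ and $L_a(1/t) = -x_a/t^2$, both of which generate an extra $t^{-1}$ relative to the function being differentiated; iterated use of these produces the second sum $t^{-1}\sum_{|I'|\le|I|,\,|J'|\le |J|}|\partial^{I'} L^{J'}\phi|$, while the inductive commutation steps not involving $1/t$ produce the $\underdel_b\partial^{I'}L^{J'}\phi$ terms. The main obstacle is the bookkeeping here: I will need to verify carefully that each time a derivative or boost lands on the factor $1/t$, the resulting $t^{-1}$ weight can be factored out in front, so that only one overall $t^{-1}$ survives in the final estimate rather than higher negative powers of $t$ that would spoil the statement.
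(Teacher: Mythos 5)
Your computations for the first and third inequalities are correct and are the standard route (the paper itself gives no proof, deferring to LeFloch--Ma, Chapter 3, where only the unhatted fields appear). The genuine gap is in your treatment of the hatted commutator in the second inequality. Since $L_a$ commutes with constant matrices, one has exactly
\begin{equation*}
[\widehat{L}_a,\widehat{L}_b]\Phi \;=\; [L_a,L_b]\Phi \;+\; \tfrac14\,[\gamma^0\gamma^a,\gamma^0\gamma^b]\,\Phi ,
\qquad \tfrac14[\gamma^0\gamma^a,\gamma^0\gamma^b]=-\tfrac12\gamma^a\gamma^b\neq 0 \ (a\neq b),
\end{equation*}
so a genuinely zeroth-order term of size $|\Phi|$ appears. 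Your plan absorbs it via the claim ``$|\Phi|\lesssim|L_c\Phi|$ up to a constant matrix shift,'' but this pointwise inequality is false: for $\Phi$ locally constant and nonzero near a point of $\Kcal$ one has $L_c\Phi=0$ there while $\Phi\neq 0$. No constant-matrix shift repairs this, and your alternative absorption (``the second part of bound 4'') is not available for the second inequality, whose right-hand side contains no undifferentiated $|\Phi|$. What your computation actually proves is the bound $|[\widehat{L}_a,\widehat{L}_b]\Phi|\leq C\big(\sum_c|L_c\Phi|+|\Phi|\big)$, which is how such estimates are stated elsewhere (e.g.\ in the Dirac literature the commutator of modified boosts is the modified rotation, zeroth-order part included) and which suffices for every application in this paper; but the literal inequality as displayed cannot be reached by your absorption step, so you should either prove that weaker (correct) version and say so, or supply a different argument — none exists pointwise.

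Two smaller points. For the fourth inequality you only present a plan and flag the bookkeeping yourself; note that the difficulty you single out (accumulating negative powers of $t$) is harmless, since $t\geq 2$ on $\Kcal$ so $t^{-k}\leq t^{-1}$. The real care is elsewhere: when you Leibniz-expand $\del^I L^J\big(t^{-1}L_a\phi\big)$ (or $\del^I L^J(\tfrac{x_a}{t}\del_t\phi+\del_a\phi)$), boosts hitting the coefficient produce factors like $L_b(x_a/t)=\delta_{ab}-x_ax_b/t^2$, which are bounded but \emph{not} $O(t^{-1})$, and the companion factor can carry up to $|J|$ boosts plus the extra $L_a$; to land in the stated index ranges you must convert $t^{-1}L_a$ back into $\underdel_a$ and run an induction on $|I|+|J|$, not a single Leibniz pass. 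Carrying this out is exactly the content of the cited LeFloch--Ma argument, so your proposal is incomplete there rather than wrong in conception.
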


Finally, let us state the well-known Sobolev inequality on hyperboloids, which is due to Klainerman \cite{Kl85}, then refined by H\"ormander \cite[Lemma 7.6.1]{Ho97} and simplified by LeFloch-Ma \cite{LeMa14}. The following version is taken from \cite{LeMa14}, Chapter 5. 
\begin{proposition} \label{prop:sob}
Let $ \phi(t,x):\RR^{1+2} \to \mathbb{C}$ be a sufficiently smooth function supported in the region $\Kcal$. Then for all $s\geq 2$, we have
\begin{align*}
  \sup_{\Hcal_s} |t \phi(t,x)| \leq C \sum_{|J|\leq 2} \|L^J \phi\|_{L_f^2(\Hcal_s)}.
\end{align*}
\end{proposition}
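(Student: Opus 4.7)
The plan is to reduce this hyperboloidal Sobolev estimate to the classical embedding $H^2(\RR^2)\hookrightarrow L^\infty(\RR^2)$ by introducing coordinates adapted to $\Hcal_s$, and then to convert flat Cartesian derivatives into Lorentz boosts, which is precisely the mechanism producing the weight $t$ on the left-hand side.

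First, I would parameterize $\Hcal_s$ by $x\in\RR^2$ via $x\mapsto(\sqrt{s^2+|x|^2},x)$ and set $\widetilde\phi(x):=\phi(\sqrt{s^2+|x|^2},x)$. A direct chain-rule computation yields
\[
\partial_{x^a}\widetilde\phi(x) \;=\; \underdel_a\phi(t,x) \;=\; t^{-1}(L_a\phi)(t,x), \qquad t=\sqrt{s^2+|x|^2},
\]
so that $\|L_a\phi\|_{L^2_f(\Hcal_s)} = \|t\,\partial_{x^a}\widetilde\phi\|_{L^2(\RR^2)}$, and iteratively $\|L^J\phi\|_{L^2_f(\Hcal_s)}$ controls (up to lower-order commutator terms that are already present in the sum over $|J|\leq 2$) iterated $t\partial_x$-derivatives of $\widetilde\phi$ in $L^2$. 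The inequality is therefore equivalent to the weighted flat statement $\sup_x t|\widetilde\phi(x)| \lesssim \sum_{|\alpha|\leq 2}\|(t\partial_x)^\alpha\widetilde\phi\|_{L^2(\RR^2)}$.

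Next, for an arbitrary point $x_0\in\RR^2$ set $t_0:=\sqrt{s^2+|x_0|^2}$ and rescale by $y=(x-x_0)/(\eps t_0)$ for a small fixed $\eps\in(0,1)$. Applying the classical Sobolev embedding $H^2(B_1)\hookrightarrow L^\infty(B_1)$ to $y\mapsto\widetilde\phi(x_0+\eps t_0 y)$ and undoing the rescaling yields
\[
t_0^2\,|\widetilde\phi(x_0)|^2 \;\lesssim\; \sum_{|\alpha|\leq 2}\int_{B_{\eps t_0}(x_0)}\bigl|(t_0\partial_x)^\alpha\widetilde\phi(x)\bigr|^2\,\rd x.
\]
An elementary case distinction ($|x_0|\geq\eps t_0$, versus $|x_0|<\eps t_0$ where one necessarily has $t_0\sim s\leq t$) shows that $t\sim t_0$ uniformly on $B_{\eps t_0}(x_0)$ once $\eps$ is fixed small enough. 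Consequently each factor $t_0\partial_{x^a}$ can be exchanged with $t\partial_{x^a}=L_a$ at the cost of lower-order terms that are absorbed into the sum, and then taking the supremum over $x_0$ while enlarging each local integral to the full $L^2_f(\Hcal_s)$-norm closes the estimate.

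The main technical obstacle is the uniform comparison $t\sim t_0$ on the rescaled ball, especially when $x_0$ is near the boundary of the light cone, where $|x_0|$ is comparable to $t_0-1$ and the ratio $s/t_0$ is small; the support assumption $\phi \in \Kcal$ together with the case distinction above handles this, and fixing $\eps$ independently of $s$ and $x_0$ ensures the constant $C$ is universal. A minor additional check is that the lower-order commutators produced in replacing $t\partial_x$ by $L_a$ (recorded in Lemma \ref{lem:commu}) are of the same structural form as the terms already in $\sum_{|J|\leq 2}\|L^J\phi\|_{L^2_f(\Hcal_s)}$, so they can be absorbed without increasing the order of the estimate.
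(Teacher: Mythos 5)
Your argument is correct, and it is essentially the standard proof of this inequality that the paper simply cites (Klainerman, H\"ormander, LeFloch--Ma \cite{LeMa14}): restrict to $\Hcal_s$ so that $\partial_{x^a}\widetilde\phi = t^{-1}L_a\phi$, rescale around $x_0$ by the factor $t_0=\sqrt{s^2+|x_0|^2}$, apply the classical two-dimensional Sobolev embedding on the unit ball, and use the case distinction on $|x_0|$ versus $\eps t_0$ to get the uniform comparison $t\sim t_0$ on the rescaled ball. Since this matches the cited proof in both structure and the key technical points (the weight exchange $t_0\partial_x \leftrightarrow L_a$ with harmless lower-order terms, and the uniform choice of $\eps$), there is nothing to add.
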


\begin{corollary} \label{cor:weightl2estimate}
Let $ \phi(t,x):\RR^{1+2} \to \mathbb{C}$ be a sufficiently smooth function supported in the region $\Kcal$. Then for all $s\geq 2$, we have 
\begin{align*}
  \sup_{\Hcal_s} |s \phi(t,x)| \leq C \sum_{|J|\leq 2} \Big\|\frac{s}{t}L^J \phi \Big\|_{L_f^2(\Hcal_s)}.
\end{align*}
\end{corollary}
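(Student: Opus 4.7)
The plan is to deduce the estimate from Proposition \ref{prop:sob} applied to the auxiliary function
\[
\widetilde\phi(t,x) := \frac{s(t,x)}{t}\,\phi(t,x), \qquad s(t,x) := \sqrt{t^2-|x|^2},
\]
which is smooth on $\Kcal$ (where $t>|x|$) and inherits the support of $\phi$. Since on the hyperboloid $\Hcal_s$ one has $t\widetilde\phi = s\phi$, Proposition \ref{prop:sob} immediately yields
\[
\sup_{\Hcal_s}|s\,\phi(t,x)| \;=\; \sup_{\Hcal_s}|t\,\widetilde\phi(t,x)| \;\leq\; C\sum_{|J|\leq 2}\|L^J\widetilde\phi\|_{L_f^2(\Hcal_s)},
\]
so the task is reduced to bounding $L^J\widetilde\phi$ pointwise by $(s/t)\sum_{|J'|\leq|J|}|L^{J'}\phi|$.

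The key observation is that the Lorentz boosts annihilate $s$: from $\partial_a(t^2-|x|^2)=-2x_a$ and $\partial_t(t^2-|x|^2)=2t$ one gets $L_a(t^2-|x|^2)=t(-2x_a)+x_a(2t)=0$, so $L_a s=0$. Combined with $L_a(1/t)=-x_a/t^2$, this gives $L_a(s/t)=-(x_a/t)\cdot(s/t)$. I would then prove by induction on $|J|$ that
\[
L^J\!\Big(\frac{s}{t}\Big) \;=\; h_J(t,x)\cdot\frac{s}{t},
\]
where $h_J$ is a polynomial in $\{x_b/t\}_{b=1,2}$ and hence uniformly bounded on $\Kcal$ by a constant $C_{|J|}$. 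The inductive step is algebraic: the class of such polynomials is stable under every $L_c$, because $L_c x_b = t\delta_{cb}$ and $L_c(1/t)=-x_c/t^2$, so $L_c(x_b/t)=\delta_{cb}-x_bx_c/t^2$, again a bounded polynomial in $x_\alpha/t$. Iterating the Leibniz rule on $L_c(h_J\cdot s/t)=(L_c h_J)(s/t)+h_J L_c(s/t)$ then closes the induction.

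With the pointwise bound $|L^J(s/t)|\leq C_{|J|}\,(s/t)$ in hand, the Leibniz rule applied once more to $\widetilde\phi=(s/t)\phi$ gives
\[
|L^J\widetilde\phi| \;\leq\; C\!\!\!\sum_{|J_1|+|J_2|=|J|}\!\!\!|L^{J_1}(s/t)|\,|L^{J_2}\phi| \;\leq\; C\,\frac{s}{t}\sum_{|J'|\leq|J|}|L^{J'}\phi|,
\]
and substituting this back into the Sobolev bound above completes the proof. The only substantive step is the inductive control of $L^J(s/t)$, and even this is purely algebraic thanks to $L_a s=0$; there is no genuine analytic obstacle, which is why the statement is presented as a corollary rather than an independent proposition.
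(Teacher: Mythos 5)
Your proposal is correct and follows essentially the same route as the paper: both define $\widetilde\phi=(s/t)\phi$, apply Proposition \ref{prop:sob} to it, and use the relation $L_a(s/t)=-(x_a/t)(s/t)$ (stated in the paper as the commutators $[L_a,s/t]$ and $[L_bL_a,s/t]$, and in your write-up via $L_a s=0$ plus Leibniz/induction) to bound $L^J\widetilde\phi$ by $(s/t)\sum_{|J'|\leq|J|}|L^{J'}\phi|$ with coefficients bounded on $\Kcal$. The only cosmetic difference is that the paper records the two needed commutators explicitly rather than running a general induction.
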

\begin{proof}
Set $\tilde{\phi}(t,x)= \frac{s}{t}\phi(t,x)$. By a direct calculation, one can find  
\begin{align*}
\Big[L_a, \frac{s}{t} \Big] &= -\frac{x^a}{t} \frac{s}{t}, \\
\Big[L_bL_a, \frac{s}{t} \Big] & = \Big(2 \frac{x^ax^b}{t^2}- \delta_a^b \Big) \frac{s}{t} -\frac{x^b}{t} \frac{s}{t}L_a,
\end{align*}
where $\delta_{ab}$ denotes the Kronecker symbol, $\delta_{aa} =1$ and $\delta_{ab} =0$ if $b\neq a$.
Applying the above commutator estimates and Proposition \ref{prop:sob} to $\tilde{\phi}$, one can obtain the desired result.
\end{proof}

Noticing the difference between $\widehat{L}_a$ and $L_a$ is a constant matrix for each $a=1,2$, we also have the following.
\begin{corollary} \label{cor:sobolev}
Let $ \psi(t,x):\RR^{1+2} \to \mathbb{C}^2$ be a sufficiently smooth vector valued function supported in the region $\Kcal$. Then for all $s\geq 2$, we have 
\begin{align*}
  \sup_{\Hcal_s} |t \psi(t,x)| \leq C \sum_{|J|\leq 2} \|\widehat{L}^J \psi\|_{L_f^2(\Hcal_s)},
\end{align*}
and 
\begin{align*}
  \sup_{\Hcal_s} |s \psi(t,x)| \leq C \sum_{|J|\leq 2} \Big\|\frac{s}{t}\widehat{L}^J \psi \Big\|_{L_f^2(\Hcal_s)}.
\end{align*}
\end{corollary}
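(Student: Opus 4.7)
The plan is to reduce both inequalities to their already established scalar counterparts (Proposition \ref{prop:sob} and Corollary \ref{cor:weightl2estimate}) and then convert the resulting $L^{J}$-norms on the right-hand sides into $\widehat L^{J}$-norms by exploiting the crucial structural fact that $\widehat L_a - L_a = -\tfrac12 \gamma^0\gamma^a$ is a \emph{constant} matrix for each $a=1,2$.

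First I would write $\psi = (\psi_1, \psi_2)^\top$ and apply Proposition \ref{prop:sob} to each complex-valued component $\psi_i$, which is legitimate because each $L_a$ is a scalar differential operator acting diagonally on the two components. Summing the two componentwise bounds (and using the equivalence of the $\ell^1$ and $\ell^2$ norms on $\mathbb C^2$) yields
\[
\sup_{\Hcal_s} |t \psi(t,x)| \;\leq\; C \sum_{|J|\leq 2} \|L^J \psi\|_{L^2_f(\Hcal_s)}.
\]
The analogous reduction using Corollary \ref{cor:weightl2estimate} gives the $s$-weighted version with $\|\tfrac{s}{t} L^J \psi\|_{L^2_f(\Hcal_s)}$ on the right.

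Next I would show the pointwise estimate
\begin{equation}\label{eq:LtohatL}
|L^J \psi| \;\leq\; C \sum_{|J'| \leq |J|} |\widehat L^{J'} \psi|
\end{equation}
for any multi-index $J$ with $|J|\leq 2$. The idea is to substitute $L_a = \widehat L_a + \tfrac12 \gamma^0\gamma^a$ into the product $L^J$ and expand into a finite linear combination of terms, each being an alternating product of the constant matrices $\tfrac12\gamma^0\gamma^a$ and the operators $\widehat L_b$. Since these matrices do not in general commute with $\widehat L_b$, one has to push them past the operators using the identity $\widehat L_b (M\phi) = M \widehat L_b \phi - \tfrac12 [\gamma^0\gamma^b, M]\phi$ for any constant matrix $M$; each such commutation produces a further constant-matrix term with strictly fewer $\widehat L_b$ factors. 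After finitely many steps (only two levels for $|J|\leq 2$), every term has all matrix factors collected on the left and an expression of the form $\widehat L^{J'}\psi$ with $|J'|\leq |J|$ on the right, giving \eqref{eq:LtohatL}.

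Finally, combining \eqref{eq:LtohatL} with the scalar Sobolev inequalities yields both inequalities in the statement; for the $s$-weighted bound one simply multiplies \eqref{eq:LtohatL} by $s/t$ before taking the $L^2_f$ norm. The only potentially delicate point is the bookkeeping for the matrix commutators, but for $|J|\leq 2$ this involves at most the single non-trivial commutator $[\gamma^0\gamma^a,\gamma^0\gamma^b]$, which is itself a constant $2\times 2$ matrix with an explicit bound, so the constants are harmless.
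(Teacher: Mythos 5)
Your proposal is correct and follows essentially the same route as the paper, whose entire justification is the remark that $\widehat{L}_a - L_a = -\tfrac12\gamma^0\gamma^a$ is a constant matrix, so the componentwise Sobolev bounds of Proposition \ref{prop:sob} and Corollary \ref{cor:weightl2estimate} transfer to $\widehat{L}^J$-norms; you simply spell out the commutation bookkeeping that the paper leaves implicit.
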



  \subsection{Hidden structure for the nonlinear term } \label{sec:hiddenstruc}
To perform the  energy estimate for the Dirac equation, we will need to treat terms like 
$\|\psi^* \gamma^0 \psi\|_{L_f^2(\Hcal_s)}$ and $\|\frac{t}{s} \psi^* \gamma^0 \psi\|_{L^{\infty} (\Hcal_s)}$.  If we ignore $\gamma^0$, a direct estimate gives us
\[
\|\psi^* \gamma^0 \psi\|_{L_f^2(\Hcal_s)} \leq  \Big\|\frac{t}{s} \psi  \Big\|_{L^{\infty}(\Hcal_s)}  \Big\|\frac{s}{t} \psi \Big\|_{L_f^2(\Hcal_s)}.
\]
However, a favorable bound for  $\big\|(t/s) \psi  \big\|_{L^{\infty}(\Hcal_s)}$ is not available. Take $\gamma^0$ into consideration, and fortunately, we find the nonlinear term $\psi^* \gamma^0 \psi$ possesses a good structure,  which was exploited in \cite{DoWy21}. Let us denote 
\begin{align*}
(\Psi)_+ = \Psi + \frac{x_a}{t} \gamma^0 \gamma^a \Psi, \ \quad \ (\Psi)_{-}= \Psi - \frac{x_a}{t} \gamma^0 \gamma^a \Psi.
\end{align*}  
Now we can state the lemma proved in \cite{DoWy21}, and to make the paper more self-contained, the proof will also be demonstrated. 
\begin{lemma} \label{lem:struc}
Let $\Psi, \Phi$ be two $\mathbb{C}^2$-valued functions, then 
\begin{align*}
\Psi^*\gamma^0\Phi = \frac{1}{4} \Big((\Psi)_{-}^* \gamma^0 (\Phi)_{-} + (\Psi)_{-}^* \gamma^0 (\Phi)_+ + (\Psi)_+^* \gamma^0 (\Phi)_{-} + \frac{s^2}{t^2} \Psi^*\gamma^0 \Phi  \Big).
\end{align*}
\end{lemma}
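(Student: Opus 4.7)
The plan is to reduce the identity to direct algebraic manipulation with the matrix $A := \tfrac{x_a}{t}\gamma^0\gamma^a$, so that $(\Psi)_\pm = (I_2 \pm A)\Psi$. I would begin by establishing three structural properties of $A$ (valid pointwise on a hyperboloid $\Hcal_s$, where $r^2 = t^2 - s^2$):
\begin{itemize}
\item[(i)] $A^\ast = A$;
\item[(ii)] $A\gamma^0 = -\gamma^0 A$;
\item[(iii)] $A^2 = \frac{r^2}{t^2} I_2 = \bigl(1-\frac{s^2}{t^2}\bigr) I_2$.
\end{itemize}
Property (i) follows from $(\gamma^0)^\ast = \gamma^0$, $(\gamma^a)^\ast = -\gamma^a$ in \eqref{gammamatrice}, together with $\gamma^a\gamma^0 = -\gamma^0\gamma^a$. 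Property (ii) is a direct consequence of the anticommutation $\gamma^0\gamma^a = -\gamma^a\gamma^0$ combined with $(\gamma^0)^2 = I_2$. For property (iii), $A^2 = -\tfrac{x_a x_b}{t^2}\gamma^a\gamma^b$; symmetrizing in $a,b$ via $\gamma^a\gamma^b + \gamma^b\gamma^a = -2\delta^{ab}I_2$ gives $A^2 = \tfrac{|x|^2}{t^2}I_2$, and $|x|^2/t^2 = 1 - s^2/t^2$ on $\Hcal_s$.

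With these in hand, the identity becomes a one-line computation. Using (i) I rewrite the three cross-terms on the right-hand side as
\[
M := (I-A)\gamma^0(I-A) + (I-A)\gamma^0(I+A) + (I+A)\gamma^0(I-A).
\]
The first two terms collapse to $2(I-A)\gamma^0$. For the third, property (ii) gives $\gamma^0(I-A) = (I+A)\gamma^0$, so $(I+A)\gamma^0(I-A) = (I+A)^2\gamma^0$, and by (iii) we have $(I+A)^2 = \bigl(2-\tfrac{s^2}{t^2}\bigr)I_2 + 2A$. Adding,
\[
M = 2(I-A)\gamma^0 + \Bigl(2-\tfrac{s^2}{t^2}\Bigr)\gamma^0 + 2A\gamma^0 = \Bigl(4-\tfrac{s^2}{t^2}\Bigr)\gamma^0,
\]
the $A\gamma^0$ terms cancelling cleanly. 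Sandwiching between $\Psi^\ast$ and $\Phi$ and dividing by $4$ yields exactly the stated identity, with the $\tfrac{s^2}{t^2}\Psi^\ast\gamma^0\Phi$ correction absorbing the deficit.

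There is no real obstacle beyond careful sign-tracking in verifying (i)--(iii); once those three properties are established, the algebraic identity is immediate and the $s^2/t^2$ correction appears automatically as the residue of $A^2$ on the hyperboloid.
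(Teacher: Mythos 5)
Your proof is correct and follows essentially the same route as the paper: the paper expands $\Psi^*\gamma^0\Phi$ into the four $\pm$ terms and shows $(\Psi)_+^*\gamma^0(\Phi)_+ = \frac{s^2}{t^2}\Psi^*\gamma^0\Phi$ by the same gamma-matrix relations, which is exactly equivalent to your computation that $M = \bigl(4-\tfrac{s^2}{t^2}\bigr)\gamma^0$ via the properties of $A=\tfrac{x_a}{t}\gamma^0\gamma^a$. Your packaging through (i)--(iii) is a clean reorganization, but the underlying algebra is identical.
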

\begin{proof}
It is obvious that $\Psi = \frac 12\big( (\Psi)_+ +(\Psi)_{-} \big)$ and $\Phi = \frac 12\big( (\Phi)_{+} +(\Phi)_{-} \big)$. Hence, we see
\begin{equation} \label{equality:1}
\begin{split} 
\Psi^*\gamma^0\Phi = \frac{1}{4} \Big((\Psi)_{-}^* \gamma^0 (\Phi)_{-} + (\Psi)_{-}^* \gamma^0 (\Phi)_+ + (\Psi)_+^* \gamma^0 (\Phi)_{-} + (\Psi)_{+}^*\gamma^0 (\Phi)_{+}  \Big).
\end{split}
\end{equation}
On the other hand, in view of \eqref{gammamatrice}, one can find 
\begin{align*}
(\gamma^0)^2 =I_2, \ \ & (\gamma^a)^2 =-I_2, \ \  (\gamma^0)^* = \gamma^0, \ \ (\gamma^a)^* = - \gamma^a, \ \ a\in \{1,2\}, \\
& \gamma^{\mu} \gamma^{\nu} + \gamma^{\nu}\gamma^{\mu} = 0, \ \ \mu \neq \nu, \ \mu, \nu \in \{0, 1, 2 \}.
\end{align*}
Now we can calculate $(\Psi)_{+}^*\gamma^0 (\Phi)_{+}$, 
\begin{align*}
&(\Psi)_{+}^*\gamma^0 (\Phi)_{+} \\
& = \Big(\Psi^* + \frac{x_a}{t} \Psi^* (\gamma^a)^* (\gamma^0)^*\Big) \gamma^0 \Big( \Phi + \frac{x_b}{t} \gamma^0 \gamma^b \Phi \Big) \\
& = \Psi^* \gamma^0 \Phi + \frac{x_b}{t}\Psi^* \gamma^0 \gamma^0\gamma^b \Phi + \frac{x_a}{t} \Psi^*(\gamma^a)^*(\gamma^0)^* \gamma^0 \Phi + \frac{x_ax_b}{t^2}\Psi^* (\gamma^a)^* (\gamma^0)^*\gamma^0\gamma^0\gamma^b \Phi \\
& = \Psi^* \gamma^0 \Phi + \frac{x_b}{t}\Psi^* \gamma^b \Phi-\frac{x_a}{t} \Psi^*\gamma^a \Phi +\frac{x_ax_b}{t^2}\Psi^* \gamma^0\gamma^a\gamma^b \Phi  \\
& = \Psi^* \gamma^0 \Phi + \frac{x_a^2}{t^2}\Psi^* \gamma^0\gamma^a\gamma^a \Phi + \frac{x_1x_2}{t^2}\Psi^* \gamma^0(\gamma^1\gamma^2 + \gamma^2\gamma^1)\Phi \\
&= \frac{s^2}{t^2} \Psi^* \gamma^0 \Phi.
\end{align*}
 Inserting this equality into \eqref{equality:1}, we get the desired result. The proof is complete.
\end{proof}

The following lemma will also be used in the sequel, whose proof is essentially contained in \cite{DoWy21}. 

\begin{lemma} \label{modifiedlorentz}
For arbitrary multi-indices $I, J$, there exists a constant $C=C(I, J)$, such that 
\begin{align*}
|\partial^IL^{J} (\psi^* \gamma^0 \psi)| \leq C \sum_{\substack{|I_1|+|I_2|\leq |I| \\|J_1|+|J_2|\leq |J|} } |(\partial^{I_1}\widehat{L}^{J_1}\psi)^*\gamma^0 \partial^{I_2}\widehat{L}^{J_2}\psi|.
\end{align*}
\end{lemma}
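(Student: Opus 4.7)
The plan is to prove the identity at the level of one derivative first, then iterate by induction on $|I|+|J|$. The key observation is that although $L_a$ does not commute with the Dirac operator (which is why $\widehat{L}_a$ was introduced), when $L_a$ is applied to the scalar quantity $\Psi^*\gamma^0\Phi$ the ``correction'' matrices $-\tfrac12\gamma^0\gamma^a$ in the definition $\widehat{L}_a = L_a - \tfrac12\gamma^0\gamma^a$ cancel exactly because of the algebraic relations $(\gamma^a)^*=-\gamma^a$ and $(\gamma^0)^2=I_2$. Concretely, since $L_a$ is a real vector field and $\gamma^0$ is constant,
\[
L_a(\Psi^*\gamma^0\Phi) = (L_a\Psi)^*\gamma^0\Phi + \Psi^*\gamma^0(L_a\Phi),
\]
and substituting $L_a = \widehat{L}_a + \tfrac12\gamma^0\gamma^a$ gives two correction terms $\mp\tfrac12\Psi^*\gamma^a\Phi$ (after using $(\gamma^a)^*(\gamma^0)^* = -\gamma^a\gamma^0$ and $(\gamma^0)^2 = I_2$), which cancel. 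This yields the clean identity
\[
L_a(\Psi^*\gamma^0\Phi) = (\widehat{L}_a\Psi)^*\gamma^0\Phi + \Psi^*\gamma^0(\widehat{L}_a\Phi),
\]
valid for any two $\mathbb{C}^2$-valued functions $\Psi,\Phi$. For the translations, the Leibniz rule is immediate: $\partial_\alpha(\Psi^*\gamma^0\Phi) = (\partial_\alpha\Psi)^*\gamma^0\Phi + \Psi^*\gamma^0(\partial_\alpha\Phi)$.

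Next I would iterate on the total order $n = |I|+|J|$. The two identities above show that one application of any vector field in $\{\partial_0,\partial_1,\partial_2,L_1,L_2\}$ to a bilinear form $(\widehat{L}^{J_1}\partial^{I_1}\psi)^*\gamma^0(\widehat{L}^{J_2}\partial^{I_2}\psi)$ produces a sum of bilinear forms of the same shape with one additional derivative ($\partial_\alpha$ or $\widehat{L}_a$) distributed onto one of the two factors. Since $\gamma^0\gamma^a$ is a constant matrix, the commutator $[\partial_\alpha,\widehat{L}_a]$ equals $[\partial_\alpha,L_a]$, which is itself a translation $\partial_\beta$; likewise $[\widehat{L}_a,\widehat{L}_b]$ differs from $[L_a,L_b]$ only by a constant matrix, so after rearranging derivatives into the canonical order $\partial^{I}\widehat{L}^J$, one only picks up terms of equal or smaller total order, and these already fit inside the right-hand side of the claimed inequality.

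The induction step is therefore: assuming the bound holds for all multi-indices of total order $< n$, applying one more $\partial_\alpha$ or $L_a$ and using the Leibniz identities above produces exactly a sum of terms of the form $|(\partial^{I_1}\widehat{L}^{J_1}\psi)^*\gamma^0(\partial^{I_2}\widehat{L}^{J_2}\psi)|$ with $|I_1|+|I_2|\le |I|$ and $|J_1|+|J_2|\le |J|$, possibly after reordering the $\widehat{L}$'s and $\partial$'s via the elementary commutator identities. Taking absolute values and collecting constants finishes the proof.

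The main obstacle is purely bookkeeping: one has to verify that the noncommutativity between $\widehat{L}_a$ and $\partial_\alpha$, and among $\widehat{L}_a$'s themselves, never produces a term that escapes the admissible family on the right-hand side. This is where the fact that the extra pieces coming from $[\widehat{L}_a,\widehat{L}_b] - [L_a,L_b]$ are constant matrices (hence absorbable into the $\gamma^0$-sandwich) is crucial; otherwise spurious lower-order matrix factors could appear. Once the one-step identity is in hand, the induction is straightforward and the constant $C(I,J)$ is combinatorial, depending only on $|I|$ and $|J|$.
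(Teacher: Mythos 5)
Your core argument is the same as the paper's: the clean identity $L_a(\Phi^*\gamma^0\Psi)=(\widehat{L}_a\Phi)^*\gamma^0\Psi+\Phi^*\gamma^0\widehat{L}_a\Psi$, obtained from $L_a=\widehat{L}_a+\tfrac12\gamma^0\gamma^a$ and the cancellation forced by $(\gamma^a)^*=-\gamma^a$, $(\gamma^0)^2=I_2$, followed by the ordinary Leibniz rule for $\partial_\alpha$ and iteration. That part is correct and is exactly how the paper proves the lemma.

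However, the commutator bookkeeping you add is both unnecessary and, as justified, not sound. It is unnecessary because the left-hand side is the \emph{ordered} operator $\partial^I L^J$ acting on the scalar $\psi^*\gamma^0\psi$: distributing the boosts first and the translations afterwards (i.e.\ in the order they are actually applied) produces factors that are automatically in the canonical order $\partial^{I_1}\widehat{L}^{J_1}\psi$, with $|I_1|+|I_2|=|I|$, $|J_1|+|J_2|=|J|$, so no reordering ever occurs; at most one needs $[\partial_\alpha,\widehat{L}_a]=[\partial_\alpha,L_a]$, which is a scalar translation and hence harmless. It is not sound in the form you state it because the claim that the constant-matrix discrepancy in $[\widehat{L}_a,\widehat{L}_b]$ is ``absorbable into the $\gamma^0$-sandwich'' fails in two space dimensions: one computes $[\widehat{L}_1,\widehat{L}_2]=\Omega_{12}-\tfrac12\gamma^1\gamma^2$, and since $\gamma^0\gamma^1\gamma^2=\pm i\,I_2$ (the volume element is central in the $2\times 2$ representation), one has $\gamma^1\gamma^2=\pm i\,\gamma^0$; a correction term of the form $(\gamma^1\gamma^2\Phi)^*\gamma^0\Psi$ therefore reduces to $\mp i\,\Phi^*\Psi$, a pairing \emph{without} $\gamma^0$, which is precisely the structure the lemma (and the subsequent use of Lemma \ref{lem:struc}) must avoid and which the right-hand side of the stated inequality does not control. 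So if your induction genuinely required commuting $\widehat{L}_1$ past $\widehat{L}_2$ inside a factor, the argument would break; the repair is simply to note, as above, that the natural distribution already lands in canonical order, so this step never arises.
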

\begin{proof}
We repeat the proof for completeness. For any $\mathbb{C}^2$ valued functions $\Phi, \Psi $, one can see via a direct computation that
\[
L_a(\Phi^*\gamma^0\Psi) = (L_a \Phi)^* \gamma^0 \Psi+ \Phi^* \gamma^0 L_a \Psi.
\]
As an operator acting on $\mathbb{C}^2$-valued function, we have  $L_a = \widehat{L}_a + \frac 12 \gamma^0 \gamma^a$. Hence 
\begin{align*}
L_a(\Phi^*\gamma^0\Psi) & = (\widehat{L}_a \Phi)^* \gamma^0 \Psi+  \frac 12 \Phi^* (\gamma^a)^* (\gamma^0)^* \gamma^0 \Psi +  \Phi^* \gamma^0 \widehat{L}_a \Psi  + \frac  12 \Phi^*\gamma^0 \gamma^0 \gamma^a \Psi \\
& =  (\widehat{L}_a \Phi)^* \gamma^0 \Psi+ \Phi^* \gamma^0 \widehat{L}_a \Psi,
\end{align*}
where we used \eqref{gammamatrice} in the last equality. Thus 
\[
L_a(\psi^* \gamma^0 \psi) = (\widehat{L}_a \psi)^* \gamma^0 \psi + \psi^* \gamma^0 \widehat{L}_a \psi.
\]
In the same way, the Leibniz rule for the derivative operator $\partial$yields the desired result. The proof is done.
\end{proof}

\section{Proof of Theorem \ref{thm:mainresult}} \label{sec:conclusion}

\subsection{Bootstrap assumptions and some direct estimates}
Let $N\in \mathbb{N}$ be an integer ($N\geq 2$ will suffice for our argument). Following the local well-posedness theory in \cite{LeMa14}, Chapter 11, a solution $\psi$ evolving from spacial localized initial data $\psi_0$ can exist up to the initial hyperboloid $\{s=s_0 \}$ with the smallness conserved, which means there exists $C_0>0$ such that 
\begin{equation} \label{initialcondition}
\Ecal^D(s_0, \partial^I \widehat{L}^J \psi)^{1/2} \leq C_0 \epsilon, \ \ \   \forall\  |I|+|J| \leq N+1.
\end{equation}
Next we assume  the following bootstrap assumptions hold for all $s\in [s_0, s_1):$
\begin{equation} \label{bootassuptions}
\Ecal^D(s, \partial^I \widehat{L}^J \psi)^{1/2} \leq C_1 \epsilon, \quad  \ |I| + |J| \leq N+1,
\end{equation}
where $C_1> C_0$ is a constant to be specified later, and $\epsilon \ll 1$ measures the size of the initial data. The hyperbolic time $s_1$ is defined as 
\begin{equation*}
s_1:= \sup\{s: s>s_0, \ \eqref{bootassuptions}\ \textrm{holds} \}.
\end{equation*}
Using the bootstrap assumption and the energy bounds, we can easily get the following $L^2$ and $L^{\infty}$ estimates. 
\begin{proposition} \label{l2linftyestimate}
Suppose that the estimates in \eqref{bootassuptions} hold, then for all $s\in [s_0, s_1)$,  we have
\begin{align}
\big\|(s/t) \partial^I\widehat{L}^J \psi \big\|_{L_f^2(\Hcal_s)} + \big\|(\partial^I \widehat{L}^J \psi)_{-} \big\|_{L_f^2(\Hcal_s)}  & \lesssim C_1 \epsilon, \ \quad |I|+|J| \leq N+1,  \label{l2estimate} \\
\sup_{(t,x)\in \Hcal_s} \big(s \big|\partial^I \widehat{L}^J \psi \big| + t\big|(\partial^I \widehat{L}^J \psi)_{-} \big| \big) &\lesssim C_1 \epsilon, \ \quad  |I|+|J| \leq N-1. \label{linftyestimate}
\end{align}
\end{proposition}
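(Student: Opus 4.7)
The plan is to split the argument into two stages: first establish the $L^2$ bounds by directly invoking the coercivity of the energy functional, then use the hyperboloidal Sobolev inequality (Corollary~\ref{cor:sobolev}) to extract pointwise bounds from them. The drop in differentiability from $N+1$ in \eqref{l2estimate} to $N-1$ in \eqref{linftyestimate} reflects the two derivatives needed for Sobolev embedding in two spatial dimensions. For the first stage I would apply \eqref{energybound} to $\partial^I \widehat{L}^J \psi$ for each $|I|+|J|\le N+1$ and combine with the bootstrap \eqref{bootassuptions} to obtain \eqref{l2estimate} immediately.

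For the pointwise bound on $s|\partial^I \widehat{L}^J \psi|$ (with $|I|+|J|\le N-1$), I would apply the weighted form of Corollary~\ref{cor:sobolev} to $\partial^I \widehat{L}^J \psi$:
\[
\sup_{\Hcal_s}\bigl|s\,\partial^I \widehat{L}^J \psi\bigr| \;\lesssim\; \sum_{|K|\le 2}\bigl\|(s/t)\widehat{L}^K \partial^I \widehat{L}^J \psi\bigr\|_{L^2_f(\Hcal_s)}.
\]
From here I would use the commutator estimates of Lemma~\ref{lem:commu} to move $\widehat{L}^K$ across $\partial^I$, producing a sum of $\|(s/t) \partial^{I'} \widehat{L}^{J'} \psi\|_{L^2_f}$ with $|I'|+|J'|\le N+1$, which are covered by the first stage.

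For the pointwise bound on $t|(\partial^I \widehat{L}^J\psi)_{-}|$, I set $\phi = \partial^I \widehat{L}^J \psi$ and apply the unweighted form of Corollary~\ref{cor:sobolev} to the vector-valued function $(\phi)_{-}$, giving $\sup|t(\phi)_{-}| \lesssim \sum_{|K|\le 2}\|\widehat{L}^K (\phi)_{-}\|_{L^2_f}$. The crucial step will be an algebraic commutation identity allowing $\widehat{L}^K(\phi)_{-}$ to be expressed as a linear combination of $(\widehat{L}^{K'}\phi)_{-}$, $|K'|\le|K|$, with uniformly bounded matrix coefficients on $\Kcal \cap \Hcal_s$. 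For $|K|=1$, writing $A = (x_a/t)\gamma^0\gamma^a$, the identities $L_b(x_a/t) = \delta_{ab} - x_ax_b/t^2$ and the Clifford anticommutator $\{A,\gamma^0\gamma^b\} = 2(x_b/t)I_2$ (a consequence of \eqref{gammamatrice}) should combine to yield
\[
\widehat{L}_b (\phi)_{-} \;=\; (\widehat{L}_b \phi)_{-} - \bigl(\gamma^0\gamma^b + (x_b/t)I_2\bigr)(\phi)_{-},
\]
and iteration then produces the claimed structure. Once this is in place, each $\|(\widehat{L}^{K'}\phi)_{-}\|_{L^2_f}$ is controlled by $\Ecal^D(s,\widehat{L}^{K'}\phi)^{1/2}$ via \eqref{energybound} and reduced to \eqref{bootassuptions} by a further commutation.

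The main obstacle is securing the identity above. A naive Leibniz expansion leaves a residual $M_b \phi$ with $M_b$ a bounded but generic matrix; bounding $\|M_b \phi\|_{L^2_f}$ would then need a bound on $\|\phi\|_{L^2_f}$, which the energy $\Ecal^D$ does not supply (it only controls $(s/t)\phi$ and $(\phi)_{-}$). The cancellation that recasts $M_b \phi$ as a bounded matrix acting on $(\phi)_{-}$ hinges on the precise matching between the matrix part $-\tfrac12\gamma^0\gamma^b$ of the modified boost $\widehat{L}_b$ and the Clifford structure encoded in $A$, and is the heart of the argument.
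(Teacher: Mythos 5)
Your proposal is correct and follows essentially the same route as the paper: coercivity of $\Ecal^D$ plus the bootstrap for \eqref{l2estimate}, the hyperboloidal Sobolev inequality (Corollary \ref{cor:sobolev}) with commutators for the $s|\partial^I\widehat{L}^J\psi|$ bound, and for the $t|(\cdot)_-|$ bound the identity $\widehat{L}_b(\phi)_- = (\widehat{L}_b\phi)_- - (\gamma^0\gamma^b + (x_b/t)I_2)(\phi)_-$, which is exactly the paper's commutator formula $[\widehat{L}_b, I_2-(x_a/t)\gamma^0\gamma^a]\Phi = -(\gamma^0\gamma^b+(x_b/t))(\Phi)_-$ (and its second-order iterate). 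The cancellation you flag as the heart of the matter is precisely what the paper computes, so no gap remains.
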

\begin{proof}
For any function $\Phi$,  recall that $(\Phi)_{-} = (I_2 - (x_a/t) \gamma^0\gamma^a ) \Phi$,   the estimates in \eqref{l2estimate} then follows directly from the definition of the energy functional $\Ecal^D(s, \partial^I \widehat{L}^J \psi)$ and inequality \eqref{energybound}. 

As to the $L^{\infty}$ estimate, applying the commutator estimates in Lemma \ref{lem:commu} and Corollary \ref{cor:sobolev}, one can easily obtain 
\[
\sup_{(t,x)\in \Hcal_s} s \big|\partial^I\widehat{L}^J \psi \big| \lesssim C_1 \epsilon, \ \ \ \ |I|+|J| \leq N-1.
\]
On the other hand,  thanks to Corollary \ref{cor:sobolev}, we have 
\begin{equation} \label{decayestimate}
\begin{split}
&\sup_{(t,x)\in \Hcal_s} t \big|(\partial^I \widehat{L}^J \psi)_{-} \big| \\
&  \leq C \sum_{|K|\leq 2} \big \|\widehat{L}^K (I_2-(x_a/t) \gamma^0 \gamma^a) \partial^I \widehat{L}^J \psi \big\|_{L_f^2(\Hcal_s)}  \\
& \leq C   \big\| (\partial^I \widehat{L}^J \psi)_{-} \big\|_{L_f^2(\Hcal_s)}+  C\sum_{1\leq |K|\leq 2} \big\|(I_2-(x_a/t) \gamma^0 \gamma^a)  \widehat{L}^K \partial^I \widehat{L}^J \psi \big\|_{L_f^2(\Hcal_s)}  \\
&  + C \sum_{1\leq |K|\leq 2} \big\|\big[\widehat{L}^K , (I_2-(x_a/t) \gamma^0 \gamma^a)\big] \partial^I \widehat{L}^J \psi \big\|_{L_f^2(\Hcal_s)}.
\end{split}
\end{equation}
For any function $\Phi$,   by a direct calculation, one can find  
\begin{equation*}
\begin{split}
\big[\widehat{L}_b, I_2 - (x_a/t)\gamma^0 \gamma^a \big] \Phi= -(\gamma^0 \gamma^b + (x_b/t)) (\Phi)_{-}.
\end{split}
\end{equation*}
Additionally, 
\begin{equation*}
\begin{split}
[\widehat{L}_c \widehat{L}_b, I_2 - (x_a/t) \gamma^0 \gamma^a] \Phi &= -(\gamma^0 \gamma^c + (x_c/t)) (\widehat{L}_b \Phi)_{-} - (\gamma^0 \gamma^b + (x^b/t)) (\widehat{L}_c \Phi)_{-}  \\
& \quad + [(x_b/t) \gamma^0 \gamma^c + (x_c/t) \gamma^0 \gamma^b + 2(x_b x_c)/t^2] (\Phi)_{-}.
\end{split}
\end{equation*}
One can also refer to \cite{DoWy21} for the  calculations of the commutators.  Noticing that $|x|/t \leq 1$ in the cone $\Kcal$, inserting the above estimates into \eqref{decayestimate} and using Lemma  \ref{lem:commu}, one can show 
\[
\sup_{(t,x)\in \Hcal_s} t \big|(\partial^I \widehat{L}^J \psi)_{-} \big|  \leq C \sum_{|I_1|+|J_1| \leq N+1}  \big\|\big(\partial^{I_1} \widehat{L}^{J_1} \psi \big )_{-} \big\|_{L_f^2(\Hcal_s)} \lesssim C_1 \epsilon, \ \ \  |I|+|J| \leq N-1.
\]
This yields the desired result.
\end{proof}

\subsection{Refined estimates and proof of Theorem \ref{thm:mainresult}}

This part is devoted to obtaining better estimates for the Dirac field so as to close the bootstrap argument. First, we present two lemmas concerning the $L^2$ and $L^{\infty}$ estimates of the nonlinear term.

\begin{lemma} \label{lem:l2estimate}
Suppose the estimates in \eqref{bootassuptions} hold, then for $s\in [s_0, s_1)$, we have 
\begin{align*}
\big\|\partial^I {L}^J (\psi^*\gamma^0 \psi) \big\|_{L_f^2(\Hcal_s)} \lesssim (C_1 \epsilon)^2 s^{-1}, \quad  \ \ |I|+ |J| \leq N+1.
\end{align*}
\end{lemma}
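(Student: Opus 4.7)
The plan is to combine three ingredients already in place: Lemma~\ref{modifiedlorentz}, which converts $\partial^I L^J$ applied to $\psi^*\gamma^0\psi$ into a sum of bilinear expressions in the modified quantities $\partial^{I_i}\widehat{L}^{J_i}\psi$; the hidden-structure identity of Lemma~\ref{lem:struc}, which provides a favorable splitting of $\Phi^*\gamma^0\Psi$; and the pointwise and $L^2$ bootstrap consequences in Proposition~\ref{l2linftyestimate}.

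First, I would apply Lemma~\ref{modifiedlorentz} to reduce matters to bounding the quantity $\|(\partial^{I_1}\widehat{L}^{J_1}\psi)^*\gamma^0\, \partial^{I_2}\widehat{L}^{J_2}\psi\|_{L^2_f(\Hcal_s)}$ over all index pairs with $|I_1|+|I_2|+|J_1|+|J_2|\leq N+1$. By a pigeonhole and symmetry argument I may assume $|I_1|+|J_1|\leq \lfloor (N+1)/2\rfloor \leq N-1$ (which holds for all $N\geq 2$), so that, writing $\Phi:=\partial^{I_1}\widehat{L}^{J_1}\psi$ and $\Psi:=\partial^{I_2}\widehat{L}^{J_2}\psi$, the low-order factor $\Phi$ enjoys the pointwise decay \eqref{linftyestimate}, while the high-order factor $\Psi$ still obeys the $L^2$ bounds \eqref{l2estimate}.

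Next, Lemma~\ref{lem:struc} yields
\[
\Phi^*\gamma^0\Psi \,=\, \tfrac14\Bigl((\Phi)_-^*\gamma^0(\Psi)_- + (\Phi)_-^*\gamma^0(\Psi)_+ + (\Phi)_+^*\gamma^0(\Psi)_- + \tfrac{s^2}{t^2}\Phi^*\gamma^0\Psi\Bigr),
\]
and I would estimate the four pieces in $L^2_f(\Hcal_s)$ separately. For the first, pair $\|t(\Phi)_-\|_{L^\infty}\lesssim C_1\epsilon$ with $\|(\Psi)_-\|_{L^2_f}\lesssim C_1\epsilon$ and use $1/t\leq 1/s$ inside $\Kcal$. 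For the second, use $\|t(\Phi)_-\|_{L^\infty}\lesssim C_1\epsilon$ together with $\|(\Psi)_+/t\|_{L^2_f}\lesssim \|\Psi/t\|_{L^2_f}=s^{-1}\|(s/t)\Psi\|_{L^2_f}\lesssim C_1\epsilon/s$, where the first inequality exploits $|(\Psi)_+|\lesssim|\Psi|$ in $\Kcal$ since $|x|/t\leq 1$, and the equality uses that $s$ is constant on $\Hcal_s$. The third piece is handled symmetrically using $\|(\Phi)_+\|_{L^\infty}\lesssim\|\Phi\|_{L^\infty}\lesssim C_1\epsilon/s$ and $\|(\Psi)_-\|_{L^2_f}\lesssim C_1\epsilon$. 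For the conformal term, pair $\|(s/t)\Phi\|_{L^\infty}\lesssim\|\Phi\|_{L^\infty}\lesssim C_1\epsilon/s$ with $\|(s/t)\Psi\|_{L^2_f}\lesssim C_1\epsilon$.

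Each of the four contributions is therefore $\lesssim (C_1\epsilon)^2/s$, and summing over the finitely many admissible index splittings gives the claim. The only real conceptual point, as opposed to bookkeeping, is that a naive H\"older estimate would require $\|\Psi\|_{L^2_f}$, which is not provided by the bootstrap; the hidden-structure identity is precisely what rescues us, since every piece of $\Phi^*\gamma^0\Psi$ comes equipped with either a $(\cdot)_-$ factor (giving extra $t$-decay pointwise, and an $L^2_f$ bound on its own) or the conformal weight $s^2/t^2$ that pairs naturally with the energy-bounded quantity $\|(s/t)\Psi\|_{L^2_f}$.
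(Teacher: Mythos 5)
Your proof is correct and follows essentially the same route as the paper's: Lemma \ref{modifiedlorentz}, the hidden-structure splitting of Lemma \ref{lem:struc}, and the bootstrap bounds of Proposition \ref{l2linftyestimate}, combined by H\"older estimates with the weights $s/t$ and $1/t$. The only cosmetic difference is your use of the conjugation symmetry $|\Phi^*\gamma^0\Psi|=|\Psi^*\gamma^0\Phi|$ to put the low-order factor first, which replaces the paper's two-case treatment of the mixed terms but changes nothing substantive.
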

\begin{proof}
According to Lemma \ref{modifiedlorentz},  we have 
\[
\big|\partial^I {L}^J(\psi^* \gamma^0 \psi)\big| \leq  \sum_{ \substack{|I_1|+|I_2| \leq |I|\\ |J_1|+|J_2| \leq |J|}} \big| (\partial^{I_1}\widehat{L}^{J_1}\psi)^*\gamma^0 (\partial^{I_2}\widehat{L}^{J_2} \psi)\big|.
\]
Thanks to Lemma \ref{lem:struc}, we can further get
\begin{equation} \label{kgstructure}
\begin{split}
&\big|\partial^I {L}^J(\psi^* \gamma^0 \psi)\big| 
\\
& \leq \sum_{ \substack{|I_1|+|I_2| \leq |I|\\ |J_1|+|J_2| \leq |J|}} \bigg( \big| (\partial^{I_1}\widehat{L}^{J_1}\psi)_{-}^*\gamma^0 (\partial^{I_2}\widehat{L}^{J_2} \psi)_{-}\big|  
  + \big| (\partial^{I_1}\widehat{L}^{J_1}\psi)_{-}^*\gamma^0 (\partial^{I_2}\widehat{L}^{J_2} \psi)_{+}\big|
\\
& \hskip2.1cm  + \big| (\partial^{I_1}\widehat{L}^{J_1}\psi)_{+}^*\gamma^0 (\partial^{I_2}\widehat{L}^{J_2} \psi)_{-}\big|  
 +\frac{s^2}{t^2} \big| (\partial^{I_1}\widehat{L}^{J_1}\psi)^*\gamma^0 (\partial^{I_2}\widehat{L}^{J_2} \psi)\big|   \bigg).
\end{split}
\end{equation}
Next we treat the above four terms in turn. Firstly, by Proposition \ref{l2linftyestimate}, it follows
\begin{equation*}
\begin{split}
& \sum_{\substack {|I_1| + |I_2| + |J_1| \\ +|J_2| \leq N+1}}
\big\| (\partial^{I_1}\widehat{L}^{J_1}\psi)_{-}^*\gamma^0 (\partial^{I_2}\widehat{L}^{J_2} \psi)_{-} \big\|_{L_f^2(\Hcal_s)} \\
& \lesssim \sum_{\substack{|I_1| + |J_1|\leq N-1 \\ |I_2|+|J_2| \leq N+1}}  \big\|(\partial^{I_1}\widehat{L}^{J_1}\psi)_{-}  \big\|_{L^{\infty}(\Hcal_s)} \big\|(\partial^{I_2}\widehat{L}^{J_2} \psi)_{-} \big\|_{L_f^2(\Hcal_s)} \\
& \lesssim (C_1 \epsilon)^2 s^{-1},
\end{split}
\end{equation*} 
where $N\geq 2,\, s\leq t$ is used in the first inequality.  Concerning the second term,  recall that $(\partial^I \widehat{L}^J \psi)_+$ shares the same estimate as $\partial^I \widehat{L}^J \psi$, we have 
\begin{equation*}
\begin{split}
& \sum_{\substack {|I_1| + |I_2| + |J_1| \\ +|J_2| \leq N+1}}
\big\| (\partial^{I_1}\widehat{L}^{J_1}\psi)_{-}^*\gamma^0 (\partial^{I_2}\widehat{L}^{J_2} \psi)_{+} \big\|_{L_f^2(\Hcal_s)} \\
& \lesssim \sum_{\substack{|I_1| + |J_1|\leq N-1 \\ |I_2|+|J_2| \leq N+1}}  \big\|(t/s)(\partial^{I_1}\widehat{L}^{J_1}\psi)_{-} \big\|_{L^{\infty}(\Hcal_s)} \big\|(s/t)(\partial^{I_2}\widehat{L}^{J_2} \psi)_{+} \big\|_{L_f^2(\Hcal_s)} \\ 
& + \sum_{\substack{|I_1| + |J_1|\leq N+1 \\ |I_2|+|J_2| \leq N-1}} \big \|(\partial^{I_1}\widehat{L}^{J_1}\psi)_{-} \big\|_{L_f^{2}(\Hcal_s)} \big\|(\partial^{I_2}\widehat{L}^{J_2} \psi)_{+} \big\|_{L^{\infty}(\Hcal_s)} \\
& \lesssim (C_1 \epsilon)^2 s^{-1}.
\end{split}
\end{equation*}
The estimate of the  third term  can be showed in a similar way as the second term, so we also have 
\[
\sum_{\substack {|I_1| + |I_2| + |J_1| \\ +|J_2| \leq N+1}}
\big\| (\partial^{I_1}\widehat{L}^{J_1}\psi)_{+}^*\gamma^0 (\partial^{I_2}\widehat{L}^{J_2} \psi)_{-} \big\|_{L_f^2(\Hcal_s)} \lesssim (C_1 \epsilon)^2 s^{-1}.
\]
Now let us bound the last term, it follows
\begin{equation*}
\begin{split}
&\sum_{\substack {|I_1| + |I_2| + |J_1| \\ +|J_2| \leq N+1}}
\big\|( s^2/t^2)(\partial^{I_1}\widehat{L}^{J_1}\psi)^*\gamma^0 (\partial^{I_2}\widehat{L}^{J_2} \psi)\big\|_{L_f^2(\Hcal_s)} \\
& \lesssim \sum_{\substack{|I_1| + |J_1|\leq N-1 \\ |I_2|+|J_2| \leq N+1}} \big \|(s/t)(\partial^{I_1}\widehat{L}^{J_1}\psi) \big\|_{L^{\infty}(\Hcal_s)} \big\|(s/t)(\partial^{I_2}\widehat{L}^{J_2} \psi) \big\|_{L_f^2(\Hcal_s)} \\
& \lesssim (C_1\epsilon)^2 s^{-1}.
\end{split}
\end{equation*}
Gathering the above four estimates, we get 
\[
\big\|\partial^I {L}^J (\psi^*\gamma^0 \psi) \big\|_{L_f^2(\Hcal_s)} \lesssim (C_1 \epsilon)^2 s^{-1}, \quad  \ \ |I|+ |J| \leq N+1.
\]
The proof is complete.
\end{proof}

\begin{lemma} \label{lem:linftyestimate}
Suppose the estimates in \eqref{bootassuptions} hold, then for $s\in [s_0, s_1)$, we have 
\begin{align*}
\big\|(t/s)\partial^{I} {L}^J (\psi^*\gamma^0 \psi)\big\|_{L^{\infty}(\Hcal_s)} \lesssim (C_1\epsilon)^2 s^{-2}, \ \quad |I|+|J| \leq N-1.
\end{align*}
\end{lemma}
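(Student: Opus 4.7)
The plan is to mirror the strategy used in Lemma \ref{lem:l2estimate}, but replace $L^2$-based estimates with pointwise ones throughout. The starting point is the same algebraic reduction: by Lemma \ref{modifiedlorentz} we bound $|\partial^I L^J(\psi^*\gamma^0\psi)|$ by a sum of products of the form $|(\partial^{I_1}\widehat L^{J_1}\psi)^*\gamma^0(\partial^{I_2}\widehat L^{J_2}\psi)|$ with $|I_1|+|I_2|\leq|I|$ and $|J_1|+|J_2|\leq|J|$. Then Lemma \ref{lem:struc} splits each such quadratic expression into four pieces: two $(-)(-)$ contributions, one mixed $(-)(+)$, one mixed $(+)(-)$, and the weighted diagonal $(s^2/t^2)\psi_1^*\gamma^0\psi_2$. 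Multiplying by the extra factor $t/s$ appearing on the left-hand side, it suffices to bound each of these four pieces in $L^\infty(\Hcal_s)$ by $(C_1\eps)^2 s^{-3}t$, which after collecting the $t/s$ becomes $(C_1\eps)^2 s^{-2}$.

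Since we are assuming $|I|+|J|\le N-1$, both subindices $|I_k|+|J_k|\le N-1$ fall inside the range where the pointwise bounds of Proposition \ref{l2linftyestimate} apply: $|\partial^{I_k}\widehat L^{J_k}\psi|\lesssim C_1\eps/s$ and $|(\partial^{I_k}\widehat L^{J_k}\psi)_{-}|\lesssim C_1\eps/t$. With these in hand the four contributions are checked by direct multiplication. The $(-)(-)$ term gives $(C_1\eps)^2/t^2$, and using $t\ge s$ the factor $t/s$ yields $(C_1\eps)^2/(ts)\lesssim (C_1\eps)^2 s^{-2}$. Each mixed $(-)(+)$ or $(+)(-)$ term is bounded by $(C_1\eps/t)(C_1\eps/s)=(C_1\eps)^2/(ts)$, so after the $t/s$ factor we again obtain $(C_1\eps)^2 s^{-2}$. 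Finally, the weighted diagonal term is $(s^2/t^2)(C_1\eps/s)^2 = (C_1\eps)^2/t^2$, producing $(C_1\eps)^2/(ts)\lesssim (C_1\eps)^2 s^{-2}$ after multiplication by $t/s$.

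There is no real obstacle; the only point requiring care is bookkeeping the index ranges so that every subindex remains below $N-1$, and remembering that $|(\partial^I\widehat L^J\psi)_{+}|\leq |\partial^I\widehat L^J\psi|$, so the $(+)$ factors inherit the $1/s$ pointwise decay rather than the sharper $1/t$ decay enjoyed by $(-)$ factors. Summing the four bounds and the finitely many multi-index splittings yields the claimed pointwise estimate.
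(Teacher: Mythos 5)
Your proposal is correct and follows essentially the same route as the paper: reduce via Lemma \ref{modifiedlorentz}, decompose with Lemma \ref{lem:struc}, and since all sub-indices stay at or below $N-1$, estimate every factor pointwise using Proposition \ref{l2linftyestimate}, with $(\cdot)_-$ giving $t^{-1}$ decay and $(\cdot)_+$ inheriting the $s^{-1}$ decay of $\psi$ itself. Two harmless slips worth noting: the decomposition produces one $(-)(-)$ term (not two), and your stated sufficiency threshold $(C_1\epsilon)^2 s^{-3}t$ per piece is actually too weak (the correct threshold before the $t/s$ factor is $(C_1\epsilon)^2 s^{-1}t^{-1}$), but your subsequent direct computations do not use that claim and are correct as written.
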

\begin{proof}
The argument is similar to that of Lemma \ref{l2estimate}. Indeed, following \eqref{kgstructure}, we have 
\begin{equation*}
\begin{split}
&(t/s)\big|\partial^I {L}^J(\psi^* \gamma^0 \psi)\big| 
\\
& \leq  \frac{t}{s} \sum_{ \substack{|I_1|+|I_2| \leq |I|\\ |J_1|+|J_2| \leq |J|}} \bigg( \big| (\partial^{I_1}\widehat{L}^{J_1}\psi)_{-}^*\gamma^0 (\partial^{I_2}\widehat{L}^{J_2} \psi)_{-}\big| 
  + \big| (\partial^{I_1}\widehat{L}^{J_1}\psi)_{-}^*\gamma^0 (\partial^{I_2}\widehat{L}^{J_2} \psi)_{+}\big|
  \\
& \hskip2.5cm + \big| (\partial^{I_1}\widehat{L}^{J_1}\psi)_{+}^*\gamma^0 (\partial^{I_2}\widehat{L}^{J_2} \psi)_{-}\big|  
 +\frac{s^2}{t^2} \big| (\partial^{I_1}\widehat{L}^{J_1}\psi)^*\gamma^0 (\partial^{I_2}\widehat{L}^{J_2} \psi)\big|   \bigg).
\end{split}
\end{equation*}
Concerning the first term in the above bracket, one can see
\begin{equation*}
\begin{split}
& \sum_{\substack {|I_1| + |I_2| + |J_1| \\ +|J_2| \leq N-1}}
\big\|(t/s) (\partial^{I_1}\widehat{L}^{J_1}\psi)_{-}^*\gamma^0 (\partial^{I_2}\widehat{L}^{J_2} \psi)_{-} \big\|_{L^{\infty}(\Hcal_s)} \\
& \lesssim \sum_{\substack {|I_1| + |I_2| + |J_1| \\ +|J_2| \leq N-1}} \big\|(t/s)(\partial^{I_1}\widehat{L}^{J_1}\psi)_{-} \big\|_{L^{\infty}(\Hcal_s)} \big\|(\partial^{I_2}\widehat{L}^{J_2} \psi)_{-} \big\|_{L^{\infty}(\Hcal_s)} \\
& \lesssim (C_1 \epsilon)^2 s^{-2}.
\end{split}
\end{equation*} 
As to the second term, one can have 
\begin{equation*}
\begin{split}
& \sum_{\substack {|I_1| + |I_2| + |J_1| \\ +|J_2| \leq N-1}}
\big\|(t/s) (\partial^{I_1}\widehat{L}^{J_1}\psi)_{-}^*\gamma^0 (\partial^{I_2}\widehat{L}^{J_2} \psi)_{+} \big\|_{L^{\infty}(\Hcal_s)} \\
& \lesssim \sum_{\substack {|I_1| + |I_2| + |J_1| \\ +|J_2| \leq N-1}} \big\|(t/s)(\partial^{I_1}\widehat{L}^{J_1}\psi)_{-} \big\|_{L^{\infty}(\Hcal_s)} \big\|(\partial^{I_2}\widehat{L}^{J_2} \psi)_{+} \big\|_{L^{\infty}(\Hcal_s)} \\
& \lesssim (C_1 \epsilon)^2 s^{-2}.
\end{split}
\end{equation*} 
Similarly,
\begin{equation*}
\begin{split}
& \sum_{\substack {|I_1| + |I_2| + |J_1| \\ +|J_2| \leq N-1}}
\big\|(t/s)(\partial^{I_1}\widehat{L}^{J_1}\psi)_{+}^*\gamma^0 (\partial^{I_2}\widehat{L}^{J_2} \psi)_{-} \big\|_{L^{\infty}(\Hcal_s)} \\
& \lesssim \sum_{\substack {|I_1| + |I_2| + |J_1| \\ +|J_2| \leq N-1}} \big\|(\partial^{I_1}\widehat{L}^{J_1}\psi)_{+} \big\|_{L^{\infty}(\Hcal_s)} \big\|(t/s)(\partial^{I_2}\widehat{L}^{J_2} \psi)_{-} \big\|_{L^{\infty}(\Hcal_s)} \\
& \lesssim (C_1 \epsilon)^2 s^{-2}.
\end{split}
\end{equation*} 
Concerning the last term, one can see
\begin{equation*}
\begin{split}
& \sum_{\substack {|I_1| + |I_2| + |J_1| \\ +|J_2| \leq N-1}}
\big\|(t/s)(s^2/t^2)(\partial^{I_1}\widehat{L}^{J_1}\psi)^*\gamma^0 (\partial^{I_2}\widehat{L}^{J_2} \psi)\big\|_{L^{\infty}(\Hcal_s)} \\
& \lesssim \sum_{\substack {|I_1| + |I_2| + |J_1| \\ +|J_2| \leq N-1}} \big\|(s/t)(\partial^{I_1}\widehat{L}^{J_1}\psi) \big\|_{L^{\infty}(\Hcal_s)} \big\|(\partial^{I_2}\widehat{L}^{J_2} \psi) \big\|_{L^{\infty}(\Hcal_s)} \\
& \lesssim (C_1 \epsilon)^2 s^{-2}.
\end{split}
\end{equation*}
Combining the above estimates, the desired result then follows. 
\end{proof}

Now we can show the improved bounds for the Dirac field. 
\begin{proposition} \label{improvebounds}
Let the estimates in \eqref{bootassuptions} hold, then there exists some constant $C$ depending on $N$ only, such that for all $s\in [s_0, s_1)$, we have 
\begin{align*}
\Ecal^D(s, \partial^I \widehat{L}^J \psi)^{1/2} \leq C_0\epsilon + C(C_1 \epsilon)^3, \ \ \quad  |I|+|J| \leq N+1.
\end{align*}
\end{proposition}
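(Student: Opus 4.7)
\medskip
\noindent\textbf{Proof proposal.}
The plan is to apply the hyperboloidal energy inequality of Proposition~\ref{boundsofenergy} to the differentiated spinor $\partial^I \widehat L^J \psi$, and then to bound the resulting source term in $L^2_f(\Hcal_\tau)$ by $(C_1\epsilon)^3\tau^{-2}$, which is integrable from $s_0$ to $s$. To that end, I first use the crucial commutation properties $[\del_\alpha, i\gamma^\mu\del_\mu]=0$ and $[\widehat L_a, i\gamma^\mu\del_\mu]=0$ (and the scalar nature of $m$) to obtain the commuted equation
\[
(i\gamma^\mu\del_\mu + m)\bigl(\del^I \widehat L^J \psi\bigr) \;=\; \del^I \widehat L^J\bigl((\psi^*\gamma^0\psi)\psi\bigr).
\]
Proposition~\ref{boundsofenergy} and the initial energy bound \eqref{initialcondition} then give
\[
\Ecal^D(s,\del^I \widehat L^J\psi)^{1/2} \;\leq\; C_0\epsilon + \int_{s_0}^{s} \bigl\|\del^I \widehat L^J\bigl((\psi^*\gamma^0\psi)\psi\bigr)\bigr\|_{L^2_f(\Hcal_\tau)}\,\rd\tau.
\]

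Next I expand the nonlinearity by the Leibniz rule. Since $\psi^*\gamma^0\psi$ is scalar and $\widehat L_a(f\Phi)=L_a(f)\Phi + f\widehat L_a\Phi$ for scalar $f$ and spinor $\Phi$, the derivative $\del^I \widehat L^J$ acting on $(\psi^*\gamma^0\psi)\psi$ distributes as a finite sum of terms of the form
\[
\del^{I_1} L^{J_1}\bigl(\psi^*\gamma^0\psi\bigr)\cdot \del^{I_2}\widehat L^{J_2}\psi,\qquad |I_1|+|I_2|\leq |I|,\quad |J_1|+|J_2|\leq |J|.
\]
I split each such product according to which factor has high order. When $|I_1|+|J_1|\leq N-1$ and $|I_2|+|J_2|\leq N+1$, I bound in $L^\infty$ by Lemma~\ref{lem:linftyestimate} and in $L^2$ by Proposition~\ref{l2linftyestimate}:
\[
\bigl\|\tfrac{t}{s}\del^{I_1}L^{J_1}(\psi^*\gamma^0\psi)\bigr\|_{L^\infty(\Hcal_\tau)}\bigl\|\tfrac{s}{t}\del^{I_2}\widehat L^{J_2}\psi\bigr\|_{L^2_f(\Hcal_\tau)} \;\lesssim\; (C_1\epsilon)^2\tau^{-2}\cdot C_1\epsilon.
\]
When $|I_1|+|J_1|\leq N+1$ and $|I_2|+|J_2|\leq N-1$, I switch roles and use Lemma~\ref{lem:l2estimate} together with the $L^\infty$ bound \eqref{linftyestimate}:
\[
\bigl\|\del^{I_1}L^{J_1}(\psi^*\gamma^0\psi)\bigr\|_{L^2_f(\Hcal_\tau)}\bigl\|\del^{I_2}\widehat L^{J_2}\psi\bigr\|_{L^\infty(\Hcal_\tau)} \;\lesssim\; (C_1\epsilon)^2\tau^{-1}\cdot C_1\epsilon\,\tau^{-1}.
\]
Both cases produce the same integrand $(C_1\epsilon)^3\tau^{-2}$. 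The condition $N\geq 2$ guarantees that in any splitting of $|I|+|J|\leq N+1$ the smaller of $|I_1|+|J_1|, |I_2|+|J_2|$ is at most $N-1$, so one of the two cases always applies.

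Finally, integrating, $\int_{s_0}^{s}\tau^{-2}\,\rd\tau\leq s_0^{-1}$, which yields
\[
\Ecal^D(s,\del^I \widehat L^J\psi)^{1/2}\leq C_0\epsilon + C(C_1\epsilon)^3
\]
for some $C=C(N)$, as claimed. The main conceptual obstacle in this argument is not the bookkeeping above but what is already packaged in Lemmas~\ref{lem:l2estimate} and \ref{lem:linftyestimate}: a naive estimate of $(\psi^*\gamma^0\psi)\psi$ would yield only $\tau^{-1}$ and fail to close, and the integrable $\tau^{-2}$ rate relies essentially on the hidden null-type structure of Lemma~\ref{lem:struc}, which converts one factor of $\psi$ into the better-decaying $(\psi)_-$ (or pays a $s^2/t^2$ weight), gaining one power of $\tau$ and thereby closing the bootstrap.
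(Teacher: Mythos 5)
Your proposal is correct and follows essentially the same route as the paper: commuting $\partial^I\widehat L^J$ through the Dirac operator, applying the hyperboloidal energy inequality of Proposition~\ref{boundsofenergy}, expanding the nonlinearity via the Leibniz rule into products $\partial^{I_1}L^{J_1}(\psi^*\gamma^0\psi)\,\partial^{I_2}\widehat L^{J_2}\psi$, and closing each high/low split with exactly the pairings from Lemma~\ref{lem:l2estimate}, Lemma~\ref{lem:linftyestimate}, and Proposition~\ref{l2linftyestimate} to obtain the integrable $(C_1\epsilon)^3\tau^{-2}$ rate. No gaps.
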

\begin{proof}
Since $\psi$ solves the Dirac equation \eqref{eq:model-Dirac}, one can easily see
\[
i\gamma^{\mu} \partial_{\mu} (\partial^I \widehat{L}^J \psi) + m (\partial^I \widehat{L}^J \psi) =  \partial^I \widehat{L}^J  \big[(\psi^*\gamma^0 \psi) \psi  \big],
\]
where we used the fact $[i\gamma^{\mu} \partial_{\mu}, \partial^I \widehat{L}^J] = 0$. Following Proposition \ref{boundsofenergy}, we have
\begin{equation*}
\begin{split}
\Ecal^D(s,\partial^I \widehat{L}^J \psi)^{1/2} \leq \Ecal^D(s_0, \partial^I \widehat{L}^J \psi )^{1/2} + \int_{s_0}^s \big\|\partial^I \widehat{L}^J  \big[(\psi^*\gamma^0 \psi) \psi  \big]  \big\|_{L_f^2(\Hcal_{\tau})} {\rm d}\tau\,.
\end{split}
\end{equation*}
By a straightforward calculation, one can get
\[
\widehat{L}_a  \big[(\psi^*\gamma^0 \psi) \psi  \big] = \big(L_a(\psi^*\gamma^0\psi) \big) \psi + (\psi^*\gamma^0\psi)\widehat{L}_a \psi.
\]
Using the Leibniz rule, we can see 
\begin{align*}
\partial^I \widehat{L}^J  \big[(\psi^*\gamma^0 \psi) \psi  \big] = \sum_{\substack{|I_1|+|I_2| = I \\ |J_1|+|J_2|= J}}
\big[ \partial^{I_1}L^{J_1} (\psi^*\gamma^0 \psi) \big] \partial^{I_2} \widehat{L}^{J_2} \psi.
\end{align*}
In view of Lemma \ref{lem:l2estimate} and Lemma \ref{lem:linftyestimate}, we conclude 
\begin{equation*}
\begin{split}
& \big\|\partial^I \widehat{L}^J  \big[(\psi^*\gamma^0 \psi) \psi  \big]  \big\|_{L_f^2(\Hcal_{\tau})} \\
&\leq \sum_{\substack{|I_1|+|I_2|+|J_1| \\+|J_2| \leq N+1}} \big\|\big[ \partial^{I_1}L^{J_1} (\psi^*\gamma^0 \psi) \big] \partial^{I_2} \widehat{L}^{J_2} \psi  \big\|_{L_f^2(\Hcal_{\tau})} \\
&\leq \sum_{\substack{|I_2|+|J_2|\leq N-1 \\ |I_1|+|J_1|\leq N+1}} \big\|\big[ \partial^{I_1}L^{J_1} (\psi^*\gamma^0 \psi) \big] \big\|_{L_f^2(\Hcal_{\tau})} 
\big\|\partial^{I_2} \widehat{L}^{J_2} \psi  \big\|_{L^{\infty}(\Hcal_{\tau})} \\
& + \sum_{\substack{|I_2|+|J_2|\leq N+1 \\ |I_1|+|J_1|\leq N-1}} \big\|(t/\tau)\big[ \partial^{I_1}L^{J_1} (\psi^*\gamma^0 \psi) \big] \big\|_{L^{\infty}(\Hcal_{\tau})} 
\big\|(\tau/t)\partial^{I_2} \widehat{L}^{J_2} \psi  \big\|_{L_f^{2}(\Hcal_{\tau})} \\
& \lesssim (C_1 \epsilon)^3 \tau^{-2},
\end{split}
\end{equation*}
where $N\geq 2$ is used in the second inequality. Noting that $s_0=2$ and inequality \eqref{initialcondition}, one can finally see
\begin{align*}
\Ecal^D(s,\partial^I \widehat{L}^J \psi)^{1/2} & \leq C_0\epsilon + \int_{2}^s C(C_1\epsilon)^3 \tau^{-2}  {\rm d}\tau \\
& \leq C_0\epsilon + C(C_1\epsilon)^3.
\end{align*}
The proof is done.
\end{proof}

Now we are ready to prove Theorem \ref{thm:mainresult}. 
\begin{proof}[Proof of Theorem \ref{thm:mainresult}]
For fixed $N\geq 2$, we can choose $C_1>4C_0$, then set $\epsilon_0$ so small that 
$C(C_1\epsilon_0)^2 \leq 1/4$.  For all $0<\epsilon\leq \epsilon_0$ and  $s\in [s_0,s_1)$, we have from Proposition \ref{improvebounds}
\begin{align} \label{betterbounds}
\Ecal^D(s, \partial^I \widehat{L}^J \psi)^{1/2} \leq \frac{1}{2} C_1 \epsilon_0, \ \ \ |I|+|J| \leq N+1.
\end{align} 
Now we  claim $s_1 =+\infty$, otherwise, i.e. $s_1 <+\infty$, then one of  the inequalities (for some $I, J$) in \eqref{bootassuptions} should be an equality. However, the inequalities in \eqref{betterbounds} implies the corresponding bound in \eqref{bootassuptions} can be refined, and this is a contradiction. Hence, the Dirac equation  \eqref{eq:model-Dirac}  asscociated with small initial data $\psi_0$ admits a global solution $\psi$, and the estimates in  \eqref{bootassuptions} hold for 
all $s\in [s_0, \infty)$ and $\epsilon\leq \epsilon_0$, from which we can deduce the following pointwise estimates (see Proposition \ref{l2linftyestimate})
\begin{align} \label{diraclinftyesti}
\sup_{\Hcal_s}s|\psi(t,x)| \lesssim C_1\epsilon.
\end{align}
Noting that in $\Hcal_s \cap \Kcal$, we have $s\sim t^{1/2}(t-r)^{1/2}$. Thus \eqref{diraclinftyesti} yields 
\begin{align} \label{wavedecay}
|\psi(t,x)| \lesssim \frac{C_1 \epsilon}{t^{1/2}(t-r)^{1/2}}.
\end{align}
 
Next, let us prove the unified pointwise decay with respect to $m \in [0,1]$. Using the semi-hyperboloidal frame, we can rewrite the Dirac equation as (see also \cite{DoLeWy21})
\begin{align*}
i\Big(\gamma^0 - \frac{x_a}{t} \gamma^a \Big)\partial_t \psi + i \gamma^a \underline{\partial}_a \psi + m\psi = (\psi^*\gamma^0\psi)\psi.
\end{align*}
Recall that $(\psi)_{-} = (I_2- (x_a/t)\gamma^0\gamma^a)\psi$ and $\underline{\partial}_a = L_a/t$, one can obtain 
\begin{equation}
\begin{split} \label{kgdecay}
mt|\psi(t,x)| & \leq t\Big|\big(\gamma^0 - \frac{x_a}{t}\gamma^a \big) \partial_t \psi\Big| + t|\gamma^a \underline{\partial}_a \psi| + t |(\psi^*\gamma^0\psi)\psi| \\
& \lesssim t \Big|\big( \gamma^0(I_2 - \frac{x_a}{t}\gamma^0\gamma^a \big) \partial_t \psi\Big| + |L_a \psi| + t|\psi^*\gamma^0\psi| |\psi| \\
& \lesssim t|(\partial \psi)_{-}| + |L_a \psi| + t|\psi^*\gamma^0\psi| |\psi| \\
& \lesssim C_1 \epsilon, 
\end{split}
\end{equation}
where Proposition \ref{l2linftyestimate} and Lemma \ref{lem:linftyestimate} are used in the last inequality.  Combining the estimates \eqref{wavedecay} and \eqref{kgdecay}, we finally obtain
\[
|\psi(t,x)| \lesssim \frac{C_1 \epsilon}{t^{1/2}(t-r)^{1/2}+mt }.
\]
The proof is finished.
\end{proof}

\section{Scattering for the Dirac field} \label{ch:scattering}
In this section, we study the long time behavior of the global solution to the Dirac equation in the Sobolev space. The main ingredient is the pointwise decay result obtained in the previous section and the ghost weight energy estimate due to  Alinhac. 

\subsection{Ghost weight energy estimate}

\begin{proposition} \label{prop:ghostenergybound}
	Let $\psi$ be the solution to the Dirac equation \eqref{eq:model-Dirac}, then for any $t\geq t_0$, we have the following the ghost weight energy estimate:
	\begin{align}
	\|\psi(t)\|_{L_x^2} + \bigg(\int_{t_0}^t \Big\|\frac{\psi-\frac{x_j}{r}\gamma^0\gamma^j\psi}{\langle \tau -r \rangle}  \Big\|_{L_x^2}^2\, \rd\tau  \bigg)^{1/2} \leq C   \|\psi(t_0)\|_{L_x^2} + C\int_{t_0}^t \|F\|_{L_x^2}\, \rd\tau, \nonumber
	\end{align}
where $C$ is an absolute constant and $r=|x|$.
\end{proposition}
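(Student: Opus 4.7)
My plan is to apply Alinhac's ghost weight method to the Dirac system, producing a spacetime divergence identity whose bulk term is precisely the claimed null-component norm. As a first step I left-multiply $i\gamma^\mu\del_\mu \psi + m\psi = F$ by $\gamma^0$ to rewrite the equation in Hamiltonian form $i\del_t\psi + i\gamma^0\gamma^a\del_a\psi + m\gamma^0\psi = \gamma^0 F$, pair with $\psi^*$ from the left, and subtract the Hermitian conjugate. Using $(\gamma^0)^* = \gamma^0$, $(\gamma^0)^2 = I_2$, and $(\gamma^0\gamma^a)^* = \gamma^0\gamma^a$ (all consequences of \eqref{gammamatrice}), the mass term $m\psi^*\gamma^0\psi$ is real and drops out, leaving the pointwise identity
\begin{equation*}
\del_t(\psi^*\psi) + \del_a\big(\psi^*\gamma^0\gamma^a\psi\big) = 2\,\textup{Im}\big(\psi^*\gamma^0 F\big).
\end{equation*}
The absence of the mass here reflects the same cancellation underlying Proposition~\ref{boundsofenergy}.

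Next I would multiply this identity by a bounded ghost weight $e^{q(r-t)}$ with $q(\sigma) := \int_{-\infty}^\sigma \la\eta\ra^{-2}\rd\eta$, so that $q'(\sigma) = \la\sigma\ra^{-2}$ and $e^q$ lies between two positive constants. Since $\del_t e^{q(r-t)} = -q'(r-t)e^{q(r-t)}$ and $\del_a e^{q(r-t)} = (x_a/r)q'(r-t)e^{q(r-t)}$, absorbing the weight into the derivatives produces the bulk correction $q'(r-t)\,e^q\,\psi^*\bigl(I_2 - (x_a/r)\gamma^0\gamma^a\bigr)\psi$. The key Dirac-specific point, and really the only substantive algebraic step, is the identity
\begin{equation*}
\Big(I_2 - \tfrac{x_a}{r}\gamma^0\gamma^a\Big)^{\!2} = 2\Big(I_2 - \tfrac{x_a}{r}\gamma^0\gamma^a\Big),
\end{equation*}
which follows from $(\gamma^a)^2=-I_2$, $\gamma^1\gamma^2 + \gamma^2\gamma^1 = 0$ and $\gamma^0\gamma^a\gamma^0 = -\gamma^a$ after a few lines. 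Hermiticity of the matrix on the left then gives $\psi^*\bigl(I_2-(x_a/r)\gamma^0\gamma^a\bigr)\psi = \tfrac12|(\psi)_-|^2$ with $(\psi)_- := \psi - (x_j/r)\gamma^0\gamma^j\psi$, so the correction is manifestly non-negative and reproduces exactly the integrand appearing in the claim.

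Finally I would integrate over $\mathbb{R}^2$ (the spatial divergence drops by finite propagation speed from the compactly supported data) and then over $\tau\in[t_0,t]$. Using the two-sided bounds on $e^q$, the elementary estimate $|\textup{Im}(\psi^*\gamma^0 F)|\leq |\psi||F|$, and Cauchy--Schwarz in $x$, this yields
\begin{equation*}
\|\psi(t)\|_{L_x^2}^2 + \int_{t_0}^t \Big\|\tfrac{(\psi)_-}{\la\tau-r\ra}\Big\|_{L_x^2}^2 \rd\tau \lesssim \|\psi(t_0)\|_{L_x^2}^2 + \int_{t_0}^t \|\psi(\tau)\|_{L_x^2}\|F(\tau)\|_{L_x^2}\rd\tau.
\end{equation*}
The only remaining obstacle is converting the quadratic right-hand side into a linear one, which is handled by the standard square-root Grönwall trick: letting $Y(t)$ denote the entire left-hand side, the inequality reads $Y(t) \leq Y(t_0) + 2\int_{t_0}^t \sqrt{Y(\tau)}\,\|F(\tau)\|_{L_x^2}\rd\tau$, and differentiating $\sqrt{Y+\delta}$ and sending $\delta\to 0$ (or a direct monotonicity argument) gives $\sqrt{Y(t)} \leq \sqrt{Y(t_0)} + \int_{t_0}^t \|F\|_{L_x^2}\rd\tau$. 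Taking square roots delivers the stated bound, with $C$ absorbing the oscillation of $e^q$.
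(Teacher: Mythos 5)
Your proposal is correct and follows essentially the same route as the paper: your weight $q(\sigma)=\int_{-\infty}^{\sigma}\langle\eta\rangle^{-2}\,\rd\eta$ is (up to an additive constant) the paper's $\arctan(r-t)$, the divergence identity obtained by pairing with $\psi^*\gamma^0$ and subtracting the conjugate, the key algebraic identity $\big(I_2-\tfrac{x_a}{r}\gamma^0\gamma^a\big)^2=2\big(I_2-\tfrac{x_a}{r}\gamma^0\gamma^a\big)$, and the final square-root Gr\"onwall step all match the paper's argument. The only difference is presentational (you derive the unweighted identity before inserting the weight and you spell out the Gr\"onwall step, which the paper leaves implicit).
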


\begin{proof}
	Let $q(x,t)= \arctan (r-t)$, multiplying $-i e^q \psi^* \gamma^0$ to both sides of \eqref{eq:model-Dirac}, and we find 
	\begin{align} \label{eq:weight}
		e^q \psi^* \partial_t \psi + e^q \psi^*\gamma^0\gamma^j\partial_j \psi - im e^q \psi^*\gamma^0\psi = -i e^q \psi^*\gamma^0 F(\psi).
	\end{align}
	Taking the complex conjuagte of \eqref{eq:weight}, one can see 
	\begin{align}
		e^q \partial_t \psi^* \psi + e^q \partial_j \psi^* \gamma^0\gamma^j \psi + ime^q \psi^*\gamma^0\psi= i e^q F(\psi)^*\gamma^0\psi.
	\end{align}
	Summing the above two equalities and using Leibniz rule, we get
	\begin{align} \label{ghostequality}
		\partial_t (e^q \psi^*\psi) + \partial_j (e^q \psi^*\gamma^0\gamma^j\psi) - (\partial_t q) e^q \psi^*\psi& -(\partial_j q)e^q \psi^*\gamma^0\gamma^j \psi  \nonumber \\ 
		&= ie^q[F(\psi)^*\gamma^0\psi-\psi^*\gamma^0F(\psi)].
	\end{align} 
	By a direct computation, we have 
	\begin{align*}
		- (\partial_t q) e^q \psi^*\psi& -(\partial_j q)e^q \psi^*\gamma^0\gamma^j \psi = \frac{1}{1+|r-t|^2} e^q \big[\psi^*\psi -\frac{x_j}{r}\psi^*\gamma^0\gamma^j\psi \big].
	\end{align*}
	On the other hand, we find 
	\begin{align*}
	& \Big(\psi - \frac{x_j}{r}\gamma^0\gamma^j \psi \Big)^*\Big(\psi - \frac{x_k}{r}\gamma^0\gamma^k \psi \Big) \\
	&= \psi^*\psi - 2\frac{x_j}{r}\gamma^0\gamma^j\psi -\frac{x_jx_k}{r^2}\psi^*\gamma^j\gamma^k \psi  \\
	&= 2 \Big( \psi^*\psi - \frac{x_j}{r}\gamma^0\gamma^j \psi\Big).
	\end{align*}
	This immediately yields 
	\[
	-(\partial_t q) e^q \psi^*\psi -(\partial_j q)e^q \psi^*\gamma^0\gamma^j \psi = \frac{1}{2(1+|r-t|^2)} e^q \Big|\psi-\frac{x_j}{r}\gamma^0\gamma^j \psi\Big|^2.
	\]
	Integrating \eqref{ghostequality} over the domain $\mathbb{R}^2 $, we can reach 
	\begin{align}
	\partial_t  \|e^{q/2}\psi\|^2_{L_x^2} +  \frac 12 \Big\| e^{q/2}\frac{\big|\psi- (x_j/r)\gamma^0\gamma^j \psi\big|}{\langle t-r\rangle} \Big\|_{L_x^2}^2 &= i\int e^q\big[F(\psi)^*\gamma^0\psi - \psi^*\gamma^0F(\psi)\,\rd x \big]  \nonumber \\
	& \leq C\|F(\psi)\|_{L_x^2} \|e^{q/2}\psi\|_{L_x^2}, \nonumber
 	\end{align}
	where we use the fact $e^{q/2} \leq e^{\pi/4}$. Thus  we  further obtain 
	\begin{align*}
	&\|e^{q/2}\psi(t)\|_{L_x^2} + \Big(\int_{t_0}^t \Big\| e^{q/2}\frac{\big|\psi- (x_j/r)\gamma^0\gamma^j \psi\big|}{\langle \tau-r\rangle} \Big\|_{L_x^2}^2\,\rd\tau \Big)^{1/2}  \\
	& \leq C\Big( \|e^{q/2} \psi(t_0)\|_{L_x^2}  + \int_{t_0}^t \|F(\psi)(\tau)\|_{L_x^2}\, \rd\tau  \Big).
	\end{align*}
	The desired result then follows, as $e^{q/2} \sim 1$.
\end{proof}

To proceed, we introduce some notations. For any $\mathbb{C}^2$-valued function $\Phi$, let 
\[
[\Phi]_{+} := \Phi + \frac{x_j}{r}\gamma^0\gamma^j \Phi , \qquad \ [\Phi]_{-} := \Phi-\frac{x_j}{r}\gamma^0\gamma^j \Phi.
\]
Note that there is a slight difference between the notations $[\Phi]_-, [\Phi]_+$ and $(\Phi)_{-}, (\Phi)_{+}$, and the latter was defined in Section \ref{sec:hiddenstruc}. Now we present a result which is similar to Lemma \ref{lem:struc}.
\begin{lemma} \label{lem:ghostdecomp}
	Let $\Psi, \Phi$ be two $\mathbb{C}^2$-valued functions, then 
\begin{align*}
\Psi^*\gamma^0\Phi = \frac{1}{4} \Big([\Psi]_{-}^* \gamma^0 [\Phi]_{-} + [\Psi]_{-}^* \gamma^0 [\Phi]_+ + [\Psi]_+^* \gamma^0 [\Phi]_{-}   \Big).
\end{align*}
\end{lemma}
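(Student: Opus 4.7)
The lemma is a close analogue of Lemma \ref{lem:struc}, so the plan is to mimic that proof, but replace the normalization $x_a/t$ by $x_a/r$. The key observation is that while before we had $\sum_a (x_a/t)^2 = r^2/t^2 < 1$ leaving a residual $(s^2/t^2)\Psi^*\gamma^0\Phi$ term, here we have $\sum_a (x_a/r)^2 = 1$, so the analogous residual will vanish. The conclusion is that the ``$+$,$+$'' cross term is zero, leaving exactly the three terms stated.

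Concretely, I would first write $\Psi = \tfrac{1}{2}([\Psi]_+ + [\Psi]_-)$ and $\Phi = \tfrac{1}{2}([\Phi]_+ + [\Phi]_-)$, and expand
\[
\Psi^*\gamma^0\Phi \;=\; \tfrac{1}{4}\bigl([\Psi]_-^*\gamma^0[\Phi]_- + [\Psi]_-^*\gamma^0[\Phi]_+ + [\Psi]_+^*\gamma^0[\Phi]_- + [\Psi]_+^*\gamma^0[\Phi]_+\bigr).
\]
It then suffices to prove $[\Psi]_+^*\gamma^0[\Phi]_+ = 0$. I would compute this by expanding directly, exactly as in the proof of Lemma \ref{lem:struc}: the cross terms $(x_b/r)\Psi^*\gamma^b\Phi$ coming from $\gamma^0\gamma^0\gamma^b = \gamma^b$ and from $(\gamma^a)^*(\gamma^0)^*\gamma^0 = -\gamma^a$ cancel each other, leaving
\[
[\Psi]_+^*\gamma^0[\Phi]_+ \;=\; \Psi^*\gamma^0\Phi \;+\; \tfrac{x_ax_b}{r^2}\Psi^*\gamma^0\gamma^a\gamma^b\Phi.
\]

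The main (and only) substantive step is to simplify $\tfrac{x_ax_b}{r^2}\gamma^0\gamma^a\gamma^b$. Using $\gamma^a\gamma^a = -I_2$ and the anticommutation $\gamma^1\gamma^2 + \gamma^2\gamma^1 = 0$ (so that the off-diagonal $a\neq b$ terms cancel in pairs), this quantity reduces to $-\tfrac{x_1^2 + x_2^2}{r^2}\gamma^0 = -\gamma^0$. Hence $[\Psi]_+^*\gamma^0[\Phi]_+ = \Psi^*\gamma^0\Phi - \Psi^*\gamma^0\Phi = 0$, and the identity follows. There is no real obstacle here beyond bookkeeping; the point of the lemma is simply to record the algebraic cancellation that makes the ghost weight term in Proposition \ref{prop:ghostenergybound} controllable in terms of the three ``$-$''-flavored products, each of which carries the favorable factor $|\psi - (x_j/r)\gamma^0\gamma^j\psi|$ supplied by the ghost weight.
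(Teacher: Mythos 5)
Your proposal is correct and follows essentially the same route as the paper: decompose $\Psi,\Phi$ into the $[\cdot]_\pm$ halves, observe the cross terms $(x_a/r)\Psi^*\gamma^a\Phi$ cancel, and use $(\gamma^a)^2=-I_2$ together with the anticommutation of $\gamma^1,\gamma^2$ and $\sum_a(x_a/r)^2=1$ to conclude $[\Psi]_+^*\gamma^0[\Phi]_+=0$. This matches the paper's proof in all substantive steps.
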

\begin{proof}
	It is easy to see 
	\begin{align*}
	\Psi^*\gamma^0\Phi & = \frac{1}{4} ([\Psi]_{+}^* + [\Psi]_{-}^*)\gamma^0 ([\Phi]_+ + [\Phi]_{-}) \\
	&=\frac{1}{4}\Big([\Psi]_{-}^* \gamma^0 [\Phi]_{-} + [\Psi]_{-}^* \gamma^0 [\Phi]_+ + [\Psi]_+^* \gamma^0 [\Phi]_{-} + [\Psi]_+^* \gamma^0 [\Phi]_{+}   \Big).
	\end{align*}
	By a straightforward computation, we have 
	\begin{align*}
		[\Psi]_+^* \gamma^0 [\Phi]_{+} &= \Psi^*\gamma^0\Phi + \frac{x_k}{r}\Psi^*\gamma^k \Phi -\frac{x_j}{r}\Psi^*\gamma^j \Phi + \frac{x_jx_k}{r^2}\Psi^*\gamma^0\gamma^j\gamma^k\Phi \\
		&=\Psi^*\gamma^0\Phi + \frac{x_j^2}{r^2} \Psi^*\gamma^0(\gamma^j)^2\Phi + \sum_{j<k} \frac{x_jx_k}{r^2}\Psi^*\gamma_0(\gamma^j\gamma^k+\gamma^k\gamma^j)\Phi \\
		&=0.
	\end{align*}
	This yields the desired result.
\end{proof}
We will perform the ghost weight energy estimate for the global Dirac solution obtained in Theorem \ref{eq:model-Dirac}. First we collect  some key  estimates which will be frequently used, according to Lemma \ref{lem:linftyestimate},
   \begin{align} \label{nullterm1}
   	|\partial^{I}L^J(\psi^*\gamma^0\psi)(t,x)|\lesssim (C_1\epsilon)^2 t^{-3/2}(t-r)^{-1/2}, \ \ \  \ |I|+|J|\leq N-1.
   \end{align}
 In addition, for $|I|+|J| \leq N-1$,   Proposition \ref{l2linftyestimate} infers 
 \begin{align}
 	|(\partial^IL^J \psi)_{-}(t,x)|+ |(\partial^I\widehat{L}^J \psi)_{-}(t,x)| & \lesssim C_1\epsilon t^{-1},  \nonumber \\
 	|\partial^IL^J \psi(t,x)| + |\partial^I\widehat{L}^J \psi(t,x)| &\lesssim C_1\epsilon t^{-1/2}(t-r)^{-1/2}.  \label{eq:linftydecaysolution}
 \end{align}
 Noting that 
 \begin{align*}
 	\big|[\partial^IL^J \psi]_{-} - (\partial^IL^J \psi)_{-} \big|\leq \Big| \frac{x_j}{r} \frac{t-r}{t}\gamma^0\gamma^j \partial^IL^J\psi \Big| \lesssim C_1 \epsilon t^{-1}, \ \ \ |I|+|J| \leq N-1,
 \end{align*}
 where we  used \eqref{eq:linftydecaysolution}. Hence 
 \begin{align}
 	|[\partial^IL^J\psi]_{-}| \lesssim C_1\epsilon t^{-1}, \ \ \  \  |I|+|J| \leq N-1. 
 \end{align}
  	
\begin{proposition} \label{prop:ghostenergy}
	Suppose the assumptions in Theorem \ref{thm:mainresult} hold. There exist some small constant $\epsilon_1\leq \epsilon_0$ and constant $C$, such that if $\|\psi_0\|_{H^{N+1}} <\epsilon \leq \epsilon_1$,  then the global solution $\psi$ associated with $\psi_0$ satisfies
	\begin{align}
		\sup_{t\geq t_0}\Ecal(t)^{1/2} \leq C \|\psi_0\|_{H^{N+1}},
    \end{align}
where 
\begin{align}
	\Ecal(t):= \sum_{|I|+|J|\leq N+1} \bigg(\sup_{t_0\leq \tau\leq t} \|\partial^IL^J \psi(\tau)\|^2 + \int_{t_0}^t \Big\|\frac{[\partial^IL^J\psi]_{-}}{\langle \tau-r \rangle} \Big\|_{L_x^2}^2 \rd\tau \bigg).
\end{align}
\end{proposition}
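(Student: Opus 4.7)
The plan is to apply the ghost weight estimate of Proposition \ref{prop:ghostenergybound} to the modified derivative $\partial^I \widehat{L}^J\psi$ for each $|I|+|J|\leq N+1$. Since $\widehat{L}_a$ commutes with $i\gamma^\mu\partial_\mu$, this quantity solves
\[
i\gamma^\mu\partial_\mu(\partial^I\widehat{L}^J\psi) + m\,\partial^I\widehat{L}^J\psi = \partial^I\widehat{L}^J\!\left[(\psi^*\gamma^0\psi)\psi\right].
\]
The initial $L^2_x$-norm at $t=t_0$ is controlled by $\|\psi_0\|_{H^{N+1}}$ by converting time derivatives via the equation. Since $\widehat{L}_a - L_a = -\tfrac{1}{2}\gamma^0\gamma^a$ is a constant matrix, the norms defining $\Ecal(t)$ are equivalent to the analogous ones built from $\partial^I\widehat{L}^J\psi$; so it suffices to close the estimate in the $\widehat{L}$-version and then transfer. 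The whole bootstrap rests on bounding $\int_{t_0}^t \|\partial^I\widehat{L}^J[(\psi^*\gamma^0\psi)\psi]\|_{L^2_x}\,d\tau$ by $C\epsilon^2\,\Ecal(t)^{1/2}$.

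To do this, I apply Leibniz together with Lemma \ref{modifiedlorentz} and then Lemma \ref{lem:ghostdecomp} to write
\[
\partial^I\widehat{L}^J\!\left[(\psi^*\gamma^0\psi)\psi\right] \;=\; \sum \bigl([\partial^{I_a}\widehat{L}^{J_a}\psi]_{\pm}\bigr)^{\!*}\gamma^0\,[\partial^{I_b}\widehat{L}^{J_b}\psi]_{\pm}\,\partial^{I_c}\widehat{L}^{J_c}\psi,
\]
where $|I_a|+|J_a|+|I_b|+|J_b|+|I_c|+|J_c|\leq N+1$ and, crucially, at least one of the first two factors is of the $[\,\cdot\,]_-$ type. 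For $N\geq 2$, at most one of the three factors has strictly more than $N-1$ derivatives; I put that factor in $L^2_x$ and the remaining two in $L^\infty_x$, where the a priori pointwise decay from Theorem \ref{thm:mainresult} and Proposition \ref{l2linftyestimate} gives $|\partial^I\widehat{L}^J\psi|\lesssim \epsilon t^{-1/2}(t-r)^{-1/2}$ and $|[\partial^I\widehat{L}^J\psi]_-|\lesssim \epsilon t^{-1}$.

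Two cases arise. \emph{Case A:} the $L^2_x$ factor is the $[\,\cdot\,]_-$ one. The two $L^\infty_x$ factors contribute $\lesssim \epsilon^2 t^{-1}(t-r)^{-1}$; since the support lies in $\{r\leq t-1\}$ we have $(t-r)^{-1}\lesssim \langle t-r\rangle^{-1}$, and thus
\[
\|\text{term}\|_{L^2_x}\lesssim \epsilon^2 \tau^{-1}\Bigl\|\tfrac{[\partial^{I^*}\widehat{L}^{J^*}\psi]_-}{\langle \tau-r\rangle}\Bigr\|_{L^2_x}.
\]
Cauchy–Schwarz in $\tau$ combined with the integrability of $\tau^{-2}$ on $[t_0,\infty)$ gives $\lesssim \epsilon^2\Ecal(t)^{1/2}$. \emph{Case B:} the $L^2_x$ factor is not $[\,\cdot\,]_-$; then a low-order $[\,\cdot\,]_-$ factor sits in $L^\infty_x$ with bound $\epsilon\tau^{-1}$, the other $L^\infty_x$ factor obeys $\epsilon\tau^{-1/2}(\tau-r)^{-1/2}\leq \epsilon\tau^{-1/2}$ on support, and the $L^2_x$ factor is bounded by $\Ecal(t)^{1/2}$. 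So $\|\text{term}\|_{L^2_x}\lesssim \epsilon^2 \tau^{-3/2}\Ecal(t)^{1/2}$, and $\int \tau^{-3/2}\,d\tau<\infty$ again yields $\lesssim \epsilon^2 \Ecal(t)^{1/2}$. Summing the ghost-weight estimate over $|I|+|J|\leq N+1$ and taking the sup in $t$ produces
\[
\Ecal(t)^{1/2} \leq C\|\psi_0\|_{H^{N+1}} + C\epsilon^2\,\Ecal(t)^{1/2},
\]
and choosing $\epsilon_1$ so that $C\epsilon_1^{2}\leq 1/2$ lets us absorb the last term and conclude.

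The main obstacle is the orchestration of the decomposition: one must ensure that in every term produced by Leibniz, one of the available factors really is of $[\,\cdot\,]_-$ type (this is exactly what Lemma \ref{lem:ghostdecomp} guarantees) and that the ghost-weight gain is used on the top-order factor whenever that factor is the $[\,\cdot\,]_-$ one, while pointwise decay is used on the low-order $[\,\cdot\,]_-$ factor otherwise. The finite-speed support property $t-r\geq 1$ is essential, both to make $(t-r)^{-1/2}$ bounded in $L^\infty_x$ on the support and to pass from $(t-r)^{-1}$ to $\langle t-r\rangle^{-1}$ so that the ghost-weight norm appears.
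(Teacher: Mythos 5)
Your proposal is correct and follows essentially the same route as the paper: apply the ghost-weight estimate of Proposition \ref{prop:ghostenergybound} to $\partial^I\widehat{L}^J\psi$, expand the cubic term via Leibniz and Lemma \ref{modifiedlorentz}, invoke the null decomposition of Lemma \ref{lem:ghostdecomp}, and then use the ghost space-time norm (with Cauchy--Schwarz in $\tau$) when the top-order factor is of $[\,\cdot\,]_-$ type and the integrable $\tau^{-3/2}$ decay otherwise, before absorbing by smallness. The only cosmetic difference is that the paper handles the ``low-order bilinear, top-order third factor'' terms with the precomputed pointwise bound \eqref{nullterm1} rather than re-running the $[\,\cdot\,]_{\pm}$ decomposition there, which changes nothing in substance.
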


\begin{proof}
	Observing that  $\partial^I\widehat{L}^J \psi $ solves the following equation:
	\[
	i\gamma^{\mu}\partial_{\mu} \partial^I \widehat{L}^J \psi + m\, \partial^I \widehat{L}^J \psi = \partial^I \widehat{L}^J [(\psi^*\gamma^0\psi)\psi].
	\] 
	For any $|I|+|J| \leq N+1$ and $t\geq t_0$,  by Proposition \ref{prop:ghostenergybound}, we have 
	\begin{align} \label{eq:l1boundforghost}
		&\sup_{t_0\leq \tau \leq t}\|\partial^I\widehat{L}^J \psi(\tau)\|_{L_x^2} + \Big(\int_{t_0}^t \Big\|\frac{[\partial^I\widehat{L}^J\psi]_{-}}{\langle \tau-r \rangle} \Big\|_{L_x^2}^2 \,\rd\tau\Big)^{1/2} \nonumber \\
		& \leq C \Big(\|\partial^I\widehat{L}^J \psi_0\|_{L^2} + \int_{t_0}^t \big\|\partial^I \widehat{L}^J [(\psi^*\gamma^0\psi)\psi](\tau) \big\|_{L^2}\,\rd\tau \Big).
	\end{align}
	On the other hand, 
	\begin{align*}
\partial^I \widehat{L}^J  \big[(\psi^*\gamma^0 \psi) \psi  \big] &= \sum_{\substack{|I_1|+|I_2| = I \\ |J_1|+|J_2|= J}}
\big[ \partial^{I_1}L^{J_1} (\psi^*\gamma^0 \psi) \big] \partial^{I_2} \widehat{L}^{J_2} \psi.
\end{align*}
Then we find 
\begin{align*}
 \big\|\partial^I \widehat{L}^J [(\psi^*\gamma^0\psi)\psi] \big\|_{L^2} 
&\leq \sum_{\substack{|I_1|+|J_1| \leq  N-1 \\ |I_2|+|J_2|\leq N+1}}
\big\|\big[ \partial^{I_1}L^{J_1} (\psi^*\gamma^0 \psi) \big] \partial^{I_2} \widehat{L}^{J_2} \psi\big\|_{L_x^2} \\
& + \sum_{\substack{|I_1|+|J_1| \leq  N+1 \\ |I_2|+|J_2|\leq N-1}}
\big\|\big[ \partial^{I_1}L^{J_1} (\psi^*\gamma^0 \psi) \big] \partial^{I_2} \widehat{L}^{J_2} \psi\big\|_{L_x^2} \\
&= I + II.
\end{align*}
Following \eqref{nullterm1}, we can see
\begin{align*}
	I &\leq \sum_{\substack{|I_1|+|J_1|\leq  N-1 \\ |I_2|+|J_2|\leq N+1}}
\big\|\big[ \partial^{I_1}L^{J_1} (\psi^*\gamma^0 \psi) \big] \big\|_{L_x^{\infty}} \big\|\partial^{I_2} \widehat{L}^{J_2} \psi \big\|_{L_x^2} \\
&\lesssim (C_1 \epsilon)^2\tau^{-3/2} \sum_{|I_2|+|J_2|\leq N+1}\big\|\partial^{I_2} \widehat{L}^{J_2} \psi (\tau)\big\|_{L_x^2}.
\end{align*}
Owing to Leibniz rule and Lemma \ref{lem:ghostdecomp}, we have  
\begin{align*}
	II\leq &\sum_{\substack{|I_1|+|J_1| \leq  N+1 \\ |I_2|+|J_2|\leq N-1}} \sum_{\substack{|I_3|+|I_4|=|I_1|\\ |J_3|+|J_4|=|J_1|}}
\big\|\big[ (\partial^{I_3}L^{J_3} \psi)^*\gamma^0 \partial^{I_4}L^{J_4} \psi \big] \partial^{I_2} \widehat{L}^{J_2} \psi\big\|_{L_x^2} \\
& \lesssim  \sum_{\substack{|I_3|+|J_3| \leq  N+1 \\ |I_2|+|J_2|+|I_4|+|J_4|\leq N-1}} \Big( \big\|[\partial^{I_3}L^{J_3} \psi]_{-}[\partial^{I_4}L^{J_4} \psi]_{-}\partial^{I_2} \widehat{L}^{J_2} \psi \big\|_{L_x^2} \\
& \qquad \qquad \qquad  + \big\|[\partial^{I_3}L^{J_3} \psi]_{-}[\partial^{I_4}L^{J_4} \psi]_{+}\partial^{I_2} \widehat{L}^{J_2} \psi \big\|_{L_x^2} \\
& \qquad  \qquad  \qquad \qquad \qquad+ \big\|[\partial^{I_3}L^{J_3} \psi]_{+}[\partial^{I_4}L^{J_4} \psi]_{-}\partial^{I_2} \widehat{L}^{J_2} \psi \big\|_{L_x^2} \Big) \\
&\lesssim \sum_{|I_3|+|J_3|\leq N+1}(C_1\epsilon)^2 \Big(\tau^{-1}\Big\|\frac{[\partial^{I_3}L^{J_3} \psi]_{-}}{\langle\tau-r\rangle} \Big\|_{L_x^2}+ \tau^{-3/2} \big\|\partial^{I_3}L^{J_3} \psi \big\|_{L_x^2} \Big).
\end{align*}
The estimates of $I$ and $II$ lead to 
\begin{align*}
	 &\big\|\partial^I \widehat{L}^J [(\psi^*\gamma^0\psi)\psi] \big\|_{L^2} 
	 \\&\lesssim \sum_{|I_3|+|J_3|\leq N+1}(C_1\epsilon)^2 \Big(\tau^{-1}\Big\|\frac{[\partial^{I_3}L^{J_3} \psi]_{-}}{\langle\tau-r\rangle} \Big\|_{L_x^2}+ \tau^{-3/2} \big\|\partial^{I_3}L^{J_3} \psi \big\|_{L_x^2} \Big).
\end{align*}
In view of \eqref{eq:l1boundforghost}, we obtain
\begin{align*}
	&\sup_{t_0\leq \tau \leq t} \big\|\partial^I\widehat{L}^J \psi(\tau) \big\|_{L_x^2} + \Big(\int_{t_0}^t \Big\|\frac{[\partial^I\widehat{L}^J\psi]_{-}}{\langle \tau-r \rangle} \Big\|_{L_x^2}^2 \rd\tau\Big)^{1/2} \\
	&\leq C\big\|\partial^I\widehat{L}^J \psi_0 \big\|_{L^2} + C (C_1\epsilon)^2 \sum_{|I_3|+|J_3|\leq N+1} \bigg(\sup_{t_0\leq \tau \leq t} \big\|\partial^{I_3}L^{J_3} \psi(\tau) \big\|_{L_x^2} \\
	&  \qquad \qquad  + \Big(\int_{t_0}^t \Big\|\frac{[\partial^{I_3}L^{J_3} \psi]_{-}}{\langle\tau-r\rangle} \Big\|^2_{L_x^2} \rd\tau\Big)^{1/2} \bigg).
\end{align*}
This immediately gives rise to the following inequality
\begin{align*}
	\Ecal(t)^{1/2} \leq C \sum_{|I|+|J|\leq N+1}\|\partial^I\widehat{L}^J \psi_0\|_{L^2} + C (C_1\epsilon)^2 \Ecal(t)^{1/2}. 
\end{align*}
Now we take $\epsilon_1\leq \epsilon_0$ so small that $C(C_1\epsilon_1)^2 \leq 1/2$. Thus if $\epsilon<\epsilon_1$, we have   
\begin{align*}
	\Ecal(t)^{1/2} \leq C \sum_{|I|+|J|\leq N+1} \|\partial^I\widehat{L}^J \psi_0\|_{L^2}.
\end{align*}
Given that $\psi_0$ has compact support in space, the desired result follows. 
\end{proof}

\subsection{Long time asymptotic behavior}
In this part, we show the small global Dirac solution scatters as time tends to infinity via the ghost weight energy estimate. Let us first derive the integral formula for a general nonlinear Dirac equation. Suppose $\psi$ solves the following Dirac equation:
\begin{align} \label{NLD}
	i\gamma^{\mu} \partial_{\mu} \psi + m \psi = G, \ \ \  \psi(t_0)=\psi_0,
\end{align}
where $m\in [0,1], t_0=2$, $G$ is the external force and $\psi_0$ is the initial data. Taking Fourier transform with respect to space variable on both sides of \eqref{NLD}, we get
\begin{align}
\partial_t \mathcal{F}{\psi} +i(\xi_a\gamma^0\gamma^a-m\gamma^0)\mathcal{F}{\psi}=-i\gamma^0 \mathcal{F}{G}.
\end{align}
By a routine calculation, one can have
\begin{align} \label{duhamelfor}
	\psi(t,x) = S(t-t_0) \psi(t_0) -i \int_{t_0}^t S(t-\tau)\gamma^0 G\, \rd\tau,
\end{align}
where $S(t):= e^{it(i\gamma^0\gamma^a \partial_a + m\gamma^0)}$ is the matrix group propagator,  and for any vector valued  function $f\in \mathbb{C}^2$, 
\begin{align} \label{groupoperator}
S(t)f = \mathcal{F}^{-1} e^{it(-\gamma^0\gamma^a\xi_a + m\gamma^0)} \mathcal{F}f(\xi).
\end{align}
In \eqref{groupoperator}, we recall that, for any matrix $A$, $e^{A}$ is defined as 
\[
e^{A} = \sum_{n=0}^{\infty} \frac{A^n}{n!}.
\]
Since $\gamma^0\gamma^a$ and $\gamma^0$ are Hermitian matrices, we can claim $B:=i\gamma^0\gamma^a\partial_a+m\gamma^0$ is a self-adjoint operator on complex-valued Hilbert space $L^2\times L^2$, with $D(B)= H^1\times H^1$. This immediately infers that $S(t)=e^{itB}$ forms a unitary group on $L^2\times L^2$, and we shall mainly use the following properties of $S(t)$:
\begin{itemize}
\item[(1)] $S(0)= I_2$, $S(t)S(\tau) = S(t+\tau)$;
\item[(2)] $S(t)$ commutes with the operators $\partial_1,\partial_2$, and $\|S(t)f\|_{L_x^2} = \|f\|_{L_x^2}$.
\end{itemize}
The following theorem asserts that if we can bound the external force in appropriate space, then the nonlinear Dirac solution behaves like a linear Dirac solution as $t\to +\infty$. One can refer to \cite{Katayama17} for a similar result in the context of wave equation.

\begin{theorem} \label{thm:diracscatter}
	Let $\psi_0 \in H^s(\mathbb{R}^2)$, $s\in \mathbb{N}$. Suppose $\psi$ is a global solution to Dirac equation \eqref{NLD} and 
	\begin{align*}
		\int_{t_0}^{+\infty} \|G(\tau)\|_{H^s}\, \rd\tau <+\infty.
	\end{align*}
	Then there exist some function $\psi^+ \in H^s(\mathbb{R}^2)$ and constant $C$, such that 
	\begin{align*}
       \|\psi(t)- S(t-t_0)\psi^+\|_{H^s} \leq C \int_{t}^{+\infty} \|G(\tau)\|_{H^s}\, \rd\tau.
	\end{align*}
	In particular, 
	\begin{align*}
		\lim_{t\to +\infty} \|\psi(t)- S(t-t_0)\psi^+\|_{H^s} = 0.
	\end{align*}
\end{theorem}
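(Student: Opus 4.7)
The plan is to apply the inverse propagator $S(-(t-t_0))$ to the Duhamel formula \eqref{duhamelfor} and show that the resulting function is Cauchy in $H^s$ as $t \to +\infty$. Concretely, set
\[
\Phi(t) := S(-(t-t_0))\psi(t) = \psi_0 - i\int_{t_0}^t S(-(\tau-t_0))\gamma^0 G(\tau)\,\rd\tau,
\]
where the second equality uses properties (1) and (2) of $S(t)$ listed above (in particular $S(-(t-t_0))S(t-\tau) = S(-(\tau-t_0))$).

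For any $t_2 \geq t_1 \geq t_0$, I would estimate, using that $S(\cdot)$ is unitary on $L^2$ and commutes with spatial derivatives (so in particular is an isometry on $H^s$), together with the fact that $\gamma^0$ is a bounded matrix,
\[
\|\Phi(t_2) - \Phi(t_1)\|_{H^s}
\leq \int_{t_1}^{t_2} \|S(-(\tau-t_0))\gamma^0 G(\tau)\|_{H^s}\,\rd\tau
\leq C\int_{t_1}^{t_2} \|G(\tau)\|_{H^s}\,\rd\tau.
\]
By the hypothesis $\int_{t_0}^{+\infty}\|G(\tau)\|_{H^s}\,\rd\tau < \infty$, the tail on the right tends to $0$ as $t_1 \to \infty$, so $\{\Phi(t)\}$ is Cauchy in $H^s$ and admits a limit $\psi^+ \in H^s$. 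Defining $\psi^+ := \lim_{t\to\infty}\Phi(t)$ and sending $t_2 \to \infty$ with $t_1 = t$ gives
\[
\|\Phi(t) - \psi^+\|_{H^s} \leq C\int_t^{+\infty}\|G(\tau)\|_{H^s}\,\rd\tau.
\]
Finally, applying $S(t-t_0)$ (again an $H^s$-isometry) yields the desired
\[
\|\psi(t) - S(t-t_0)\psi^+\|_{H^s} \leq C\int_t^{+\infty}\|G(\tau)\|_{H^s}\,\rd\tau,
\]
and the stated limit follows by the dominated convergence/absolute continuity of the integral.

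The argument is essentially soft: the only non-routine items are verifying that $S(t)$ is an isometry on $H^s$ and that $S(t)$ commutes with $\partial_1,\partial_2$. Both follow directly from the Fourier representation \eqref{groupoperator}, since the symbol $-\gamma^0\gamma^a\xi_a + m\gamma^0$ is Hermitian (making $e^{it(-\gamma^0\gamma^a\xi_a+m\gamma^0)}$ unitary pointwise in $\xi$) and is a Fourier multiplier (which obviously commutes with $\partial_j = \mathcal F^{-1}(i\xi_j)\mathcal F$). I do not anticipate a genuine obstacle; the proof is a direct Cauchy-in-$H^s$ argument using the Duhamel formula and the group properties (1)--(2) of $S(t)$.
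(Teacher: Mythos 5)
Your proposal is correct and follows essentially the same route as the paper: both rest on the Duhamel formula \eqref{duhamelfor}, the group property and $H^s$-isometry of $S(t)$ (unitarity on $L^2$ plus commutation with $\partial_1,\partial_2$), and boundedness of $\gamma^0$, with $\psi^+=\psi(t_0)-i\int_{t_0}^{+\infty}S(t_0-\tau)\gamma^0 G\,\rd\tau$; your Cauchy-sequence formulation of $\Phi(t)=S(-(t-t_0))\psi(t)$ is just a slight repackaging of the paper's direct verification that this improper integral converges absolutely in $H^s$. No gaps.
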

\begin{proof}
	We set 
	\begin{align*}
		\psi^+= \psi(t_0) - i \int_{t_0}^{+\infty} S(t_0-\tau)\gamma^0G\,\rd\tau.
	\end{align*}
	This formula makes sense, since 
	\begin{align*}
		\|\psi^+\|_{H^s} \leq \|\psi(t_0)\|_{H^s} + C\int_{t_0}^{+\infty}\|G(\tau)\|_{H^s} \,\rd \tau < +\infty,
	\end{align*}
	where we have used $\|\partial_x^{\alpha}S(t) f\|_{L_x^2} = \|\partial^{\alpha}_x f\|_{L_x^2}, \ \forall\ \alpha = (\alpha_1,\alpha_2),\, \partial_x^{\alpha}:=\partial_{x_1}^{\alpha_1} \partial_{x_2}^{\alpha_2},\, |\alpha|\leq s$. Owing to \eqref{duhamelfor} and group property of $S(t)$, we can reach 
	\begin{align*}
		\|\psi(t)- S(t-t_0)\psi^+\|_{H^s} &\leq \Big\|\int_{t}^{+\infty}S(t-\tau)\gamma^0G\,\rd \tau \Big\|_{H^s} \\
		 & \leq C \int_{t}^{+\infty}\|G(\tau)\|_{H^s}\, \rd\tau.
	\end{align*}
	The proof is completed.
\end{proof}
\begin{remark}
	We point out that $S(t-t_0)\psi^+$ solves the linear Dirac equation $i\gamma^{\mu}\partial_{\mu} \psi + m \psi =0$ with initial data $\psi^+$ at $t=t_0$.
\end{remark}

\begin{proof}[Proof of Theorem \ref{thm:scatterresult}] 
According to Proposition \ref{prop:ghostenergy}, we have 
\begin{align}
	\sup_{t\geq t_0}\Ecal(t)^{1/2} \leq C \|\psi_0\|_{H^{N+1}} \leq C\epsilon.
\end{align}
In order to show \eqref{eq:scatterhighorder}, by Theorem \ref{thm:diracscatter}, it suffices to bound 
$$\int_{t_0}^{+\infty} \|(\psi^*\gamma^0\psi)\psi(\tau)\|_{H^{N+1}} \,\rd \tau.$$
Indeed,  
\begin{align*}
	&\sum_{|I|\leq N+1}\|\partial^{I}[(\psi^*\gamma^0\psi)\psi]\|_{L_x^2} \\
	& \leq \sum_{\substack{|I_1| \leq N-1\\|I_2|\leq N+1}} \|[\partial^{I_1}(\psi^*\gamma^0 \psi) ]\partial^{I_2}\psi\|_{L_x^2} + \sum_{\substack{|I_1|\leq N+1\\|I_2|\leq N-1}} \|[\partial^{I_1}(\psi^*\gamma^0 \psi) ]\partial^{I_2}\psi\|_{L_x^2} \\
	&= I+II.
\end{align*}
Utilizing  \eqref{nullterm1}, we can derive 
\begin{align} \label{eq:higherorderdec}
	|I| \lesssim (C_1\epsilon)^2 \tau^{-3/2} \sup_{\tau\geq t_0} \Ecal(\tau)^{1/2}.
\end{align}
On the estimate of $II$, once again, we use Lemma \ref{lem:ghostdecomp},
\begin{align} \label{eq:ghostdecay}
	|II| &\leq \sum_{\substack{|I_3|+|I_4|\leq N+1\\|I_2|\leq N-1}} \|[(\partial^{I_3}\psi)^*\gamma^0 \partial^{I_4}\psi]\partial^{I_2}\psi\|_{L_x^2} \nonumber\\
	& \lesssim   \sum_{\substack{|I_3|\leq N+1\\ |I_4|+|I_2|\leq N-1}} \Big(\|[\partial^{I_3}\psi]_{-}[\partial^{I_4}\psi]_{-} \partial^{I_2}\psi\|_{L_x^2} + \|[\partial^{I_3}\psi]_{-}[\partial^{I_4}\psi]_{+} \partial^{I_2}\psi\|_{L_x^2} \nonumber\\
	& \qquad \qquad \qquad + \|[\partial^{I_3}\psi]_{+}[\partial^{I_4}\psi]_{-} \partial^{I_2}\psi\|_{L_x^2}  \Big) \nonumber\\
	&\lesssim (C_1\epsilon)^2 \sum_{|I_3|\leq N+1}\Big( \tau^{-1} \Big\|\frac{[\partial^{I_3}\psi]_{-}}{\langle \tau-r\rangle} \Big\|_{L_x^2} + \tau^{-3/2} \|\partial^{I_3} \psi\|_{L_x^2} \Big).
\end{align}
The estimates of $I$ and $II$ lead to 
\begin{align}
	&\int_{t_0}^{+\infty} \|(\psi^*\gamma^0\psi)\psi(\tau)\|_{H^{N+1}}\,\rd \tau \nonumber\\
	&\leq C (C_1\epsilon)^2 \sup_{\tau\geq t_0} \Ecal(\tau)^{1/2}.
	\end{align}
Consequently, we conclude from Theorem \ref{thm:diracscatter} that there exists some $\psi^+ \in H^{N+1}$, such that 
\begin{align*}
&\|\psi(t) - S(t-t_0) \psi^+\|_{H^{N+1}} \\
&\leq C \int_{t}^{\infty} \|(\psi^*\gamma^0\psi)\psi(\tau)\|_{H^{N+1}}\,\rd \tau   \\
& \leq C(C_1\epsilon)^2 \epsilon t^{-1/2} + C(C_1\epsilon)^2 \Big(\int_t^{\infty}  \Big\|\frac{[\partial^{I_3}\psi]_{-}}{\langle \tau-r\rangle} \Big\|^2_{L_x^2} \rd \tau\Big)^{1/2} t^{-1/2} \\
& \leq C\epsilon(C_1\epsilon)^2 t^{-1/2}.
\end{align*}
The proof of \eqref{eq:scatterhighorder} is done. Regarding the estimate of \eqref{eq:scatterloworder}, we have 
\begin{align}
	\|(\psi^*\gamma^0\psi)\psi\|_{H^{N-1}} &\leq \sum_{|I|\leq N-1}\|\partial^{I}[(\psi^*\gamma^0\psi)\psi]\|_{L_x^2} \nonumber \\
	& \leq \sum_{\substack{|I_3|+|I_4|\\+|I_2|\leq N-1}} \|[(\partial^{I_3}\psi)^*\gamma^0 \partial^{I_4}\psi]\partial^{I_2}\psi\|_{L_x^2}.
\end{align} 
The slight difference with the estimate of $II$ above is that we can always take $L^2$ norm of the term $[\cdot]_{-}$ and $L^{\infty}$ norm of the other two terms, thus 
\begin{align}
\|(\psi^*\gamma^0\psi)\psi\|_{H^{N-1}}  \lesssim (C_1\epsilon)^2 \sum_{|I_3|\leq N+1} \tau^{-1} \Big\|\frac{[\partial^{I_3}\psi]_{-}}{\langle \tau-r\rangle} \Big\|_{L_x^2},
\end{align}	
from which we finally derive
\begin{align}
	\|\psi(t) - S(t-t_0) \psi^+\|_{H^{N-1}} \leq  C(C_1\epsilon)^2\sum_{|I_3|\leq N+1} \Big(\int_t^{+\infty}  \Big\|\frac{[\partial^{I_3}\psi]_{-}}{\langle \tau-r\rangle} \Big\|^2_{L_x^2} \rd \tau\Big)^{1/2} t^{-1/2}.
\end{align}
Setting 
$$
C(t) = C(C_1\epsilon)^2 \sum_{|I_3|\leq N+1}\Big(\int_t^{+\infty}  \Big\|\frac{[\partial^{I_3}\psi]_{-}}{\langle \tau-r\rangle} \Big\|^2_{L_x^2} \rd \tau\Big)^{1/2}.
$$
Obviously, $\lim_{t\to +\infty} C(t) =0$. This completes the proof.
\end{proof}

\section{Improved estimate for the massless Dirac field}
In this Section, we show the massless Dirac field can gain one order decay of $(t-r)^{-1}$ at the expense of some extra logarithmic growth in time $t$. To begin with, we first state some notations and useful results involved.  Following the notation in \cite{LeMa14}, we introduce the hyperboloidal frame over $\Hcal_s$:
\begin{align*}
\partial_s := \frac{s}{t}\partial_t, \qquad \ \ \overline{\partial}_a := \underline{\partial}_a = \frac{x_a}{t} \partial_t + \partial_a, \ \ a=1,2.
\end{align*}
By a simple computation, one can find the scaling vector field $L_0=t \partial_t + x_1\partial_1 +x_2 \partial_2$ can be rewritten as
\begin{align*}
L_0 = s\partial_s + x^a \overline{\partial}_a, \ \ \ \ x^a= x_a.
\end{align*}



We will make use of the conformal energy, which was first derived by Ma and Huang in \cite{MaHu17} in the context of the hyperboloidal foliation method for wave equations in $\RR^{1+3}$. A similar version in $\RR^{1+2}$ is given in \cite{YM0}, and then is widely used in many contexts, see for instance \cite{DoLeLe21}, where the authors showed the boundedness of top-order energy for two dimensional quasilinear wave equations, and an alternative novel proof is given in \cite{Li21}.  
\begin{proposition} \label{prop:conformalenergy}
Let $u$ be a sufficiently regular function defined in $\Kcal_{[s_0,s_1]}$. Then for all $s\in [s_0,s_1]$, we have 
\begin{align*}
E_{con}^{1/2}(s,u) \leq E^{1/2}_{con}(s_0,u) + \int_{s_0}^s \tau \|\Box u\|_{L_f^2(\Hcal_{\tau})}\, {\rm d}\tau,
\end{align*}
here
 \begin{align}
E_{con}(s,u):= \int_{\Hcal_s} \sum_{a} (s\overline{\partial}_a u)^2 + (Ku+u)^2\, {\rm d}x, \nonumber
\end{align}
where $K:= s\partial_s + 2x_a \overline{\partial}_a$.
\end{proposition}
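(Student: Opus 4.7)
The plan is to adapt the classical Morawetz conformal multiplier identity to the hyperboloidal foliation. The key observation is that the standard conformal Killing field of $\RR^{1+2}$,
\[
K_0 := (t^2 + |x|^2)\del_t + 2 t\, x^a\del_a,
\]
when rewritten in hyperboloidal coordinates via $\del_t = (t/s)\del_s$ and $\del_a|_t = \overline\del_a - (x_a/t)\del_t$, satisfies $K_0 = t\, K$ with $K = s\del_s + 2 x^a\overline\del_a$ the operator from the statement. The classical conformal multiplier identity for $\Box$ in two spatial dimensions reads
\[
2\bigl(K_0 u + t u\bigr)\,\Box u \;=\; \del_\alpha Q^\alpha[u],
\]
where $Q^\alpha[u]$ is an explicit quadratic current assembled from the conformally modified energy--momentum tensor. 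Factoring $t$ out of the multiplier produces the working form
\[
\del_\alpha Q^\alpha[u] \;=\; 2 t\,(Ku + u)\,\Box u.
\]

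First I would integrate this identity over the hyperboloidal slab $\Kcal_{[s_0,s]}$ and apply the divergence theorem. The lateral contribution vanishes because $u$ is supported in the interior of the cone $\Kcal$. A (careful but direct) computation of the flux of $Q^\alpha$ through $\Hcal_\tau$, once the overall factor of $t$ inherited from $K_0 = tK$ is absorbed into the Lorentz normalization of the induced surface measure on $\Hcal_\tau$, shows that the boundary flux assembles exactly into
\[
E_{con}(\tau, u) \;=\; \int_{\Hcal_\tau}\Bigl(\textstyle\sum_a (s\overline\del_a u)^2 + (Ku+u)^2 \Bigr)\,\rd x.
\]
Converting the bulk integral to the hyperboloidal slicing via $\rd t\,\rd x = (\tau/t)\,\rd\tau\,\rd x$ cancels the explicit $t$ and leaves a net factor of $\tau$, yielding the identity
\[
E_{con}(s,u) - E_{con}(s_0, u) \;=\; 2\int_{s_0}^{s}\tau\int_{\Hcal_\tau}(Ku+u)\,\Box u\,\rd x\,\rd\tau.
\]

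Second, Cauchy--Schwarz on each slice $\Hcal_\tau$ combined with the immediate bound $\|Ku+u\|_{L_f^2(\Hcal_\tau)} \leq E_{con}^{1/2}(\tau,u)$ (straight from the definition of $E_{con}$) reduces matters to the integral inequality
\[
E_{con}(s,u) \;\leq\; E_{con}(s_0, u) + 2\!\int_{s_0}^{s}\!\tau\,\|\Box u\|_{L_f^2(\Hcal_\tau)}\,E_{con}^{1/2}(\tau,u)\,\rd\tau.
\]
A standard Gronwall-type argument — set $g(s)$ equal to the right-hand side, note $(g^{1/2})'\leq \tau\,\|\Box u\|_{L_f^2(\Hcal_\tau)}$ (after a trivial $\varepsilon$-regularization $g \mapsto g + \varepsilon$ to avoid dividing by zero), and integrate from $s_0$ to $s$ — then produces the stated estimate. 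The main obstacle is the flux computation: one must carefully verify, using $t^2 = s^2 + |x|^2$ repeatedly together with the commutation relations between $\overline\del_a$ and $\del_t$, that the Minkowski flux of $Q^\alpha$ across $\Hcal_\tau$ simplifies to exactly the clean hyperboloidal expression $\sum_a (s\overline\del_a u)^2 + (Ku+u)^2$, with cross terms cancelling and all time derivatives organizing themselves into the combination $Ku + u$.
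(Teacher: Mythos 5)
Your route is sound, and it is in fact the same route the paper implicitly relies on: the paper does not prove Proposition \ref{prop:conformalenergy} at all, but quotes it from Ma--Huang \cite{MaHu17} and Ma \cite{YM0}, whose derivation is precisely the conformal-multiplier argument you outline. The key identifications check out: $K_0=(t^2+|x|^2)\del_t+2tx^a\del_a = tK$ (using $s^2=t^2-|x|^2$), the multiplier identity $\del_\alpha Q^\alpha=2(K_0u+tu)\,\Box u$ holds exactly in $\RR^{1+2}$ with $Q^\alpha=2\big(T^{\alpha\beta}K_{0\beta}+tu\,\del^\alpha u-\tfrac12 u^2\del^\alpha t\big)$, the change of variables ${\rm d}t\,{\rm d}x=(\tau/t)\,{\rm d}\tau\,{\rm d}x$ produces the factor $\tau$, and the Cauchy--Schwarz/Gronwall step at the end is routine. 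One caveat on the step you flag as the main obstacle: the flux of $Q^\alpha$ through $\Hcal_\tau$ is \emph{not} pointwise equal to the integrand $\sum_a(s\overline{\partial}_au)^2+(Ku+u)^2$; a direct computation shows the two differ by the tangential divergence $\overline{\partial}_a(x_a u^2)$ (the mismatch sits in the undifferentiated-$u$ terms $u\del_t u$, $u\,x_a\del_a u$, $u^2$), so the identification with $E_{con}(\tau,u)$ holds only after integrating over the slice and using that $\overline{\partial}_a$ is tangent to $\Hcal_\tau$ together with the support of $u$ in $\Kcal$ (also needed to kill the lateral boundary term, and guaranteed by the paper's standing support assumption). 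With that integration by parts along the slice made explicit, and modulo an inconsequential sign convention in the bulk term (which Cauchy--Schwarz absorbs anyway), your argument gives a complete proof of the stated estimate, whereas the paper itself only supplies a reference.
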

An important role played by the conformal energy is the control of the $L^2$ type norm for the solution with no derivatives, as proved by Y. Ma in \cite[page 8]{YM0}. We have the following.  
\begin{proposition} \label{prop:l2viaconformal}
Let $u$ be a sufficiently regular function defined in $\Kcal_{[s_0,s_1]}$. Then for all $s\in [s_0,s_1]$, we have
\begin{align}
\Big\|\frac{s}{t} u \Big\|_{L_f^2(\Hcal_s)} \leq \Big\|\frac{s_0}{t} u \Big\|_{L_f^2(\Hcal_{s_0})} + C \int_{s_0}^s \frac{E_{con}^{1/2}(\tau,u)}{\tau}\, \rd \tau. \nonumber
\end{align}
\end{proposition}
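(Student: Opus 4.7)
The plan is to differentiate $F(s)^2 := \|(s/t)\,u\|^2_{L_f^2(\Hcal_s)}$ with respect to $s$, reduce the result to a differential inequality in $s$ involving $E_{con}^{1/2}(s,u)$, and then integrate. Writing $F(s)^2 = \int_{\mathbb{R}^2} (s^2/t^2)\,u^2\,dx$ with $t = \sqrt{s^2+|x|^2}$, I differentiate under the integral using $\partial_s t = s/t$ at fixed $x$ and the hyperbolic time derivative $\partial_s u = (s/t)\partial_t u$, obtaining
\[
\frac{d}{ds} F(s)^2 = \int \frac{2 s |x|^2}{t^4}\,u^2\,dx + \int \frac{2 s^2}{t^2}\,u\,\partial_s u\,dx.
\]

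The key step is to substitute $s\,\partial_s u = (Ku + u) - u - 2 x^a\overline{\partial}_a u$, which follows from $K = s\partial_s + 2 x^a\overline{\partial}_a$, and then integrate by parts in $x$ on the $x^a\overline{\partial}_a u$ piece. The algebraic identity $\overline{\partial}_a(x^a/t^2) = 2 s^2/t^4$ (derived from $\overline{\partial}_a t = x_a/t$) produces terms that combine cleanly with the $2s|x|^2/t^4 \cdot u^2$ contribution, collapsing the expression to
\[
\frac{d}{ds} F(s)^2 = 2\int \frac{s^3}{t^4}\,u^2\,dx + 2 \int \frac{s}{t^2}\,u(Ku + u)\,dx.
\]

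Next, I apply Cauchy--Schwarz together with the pointwise bound $1/t \leq 1/s$ valid on $\Hcal_s$. The second integral is controlled via
\[
\Bigl| \int \tfrac{s}{t^2}\,u(Ku+u)\,dx \Bigr| \leq \tfrac{1}{s} \|(s/t)\,u\|_{L_f^2(\Hcal_s)} \, \|Ku + u\|_{L_f^2(\Hcal_s)} \leq \tfrac{F(s)\, E_{con}^{1/2}(s, u)}{s},
\]
while rewriting $s^3/t^4 = (s/t^2)(s/t)^2$ and using $s/t^2 \leq 1/s$ bounds the first by $F(s)^2/s$. This gives the differential inequality $F'(s) \leq F(s)/s + E_{con}^{1/2}(s, u)/s$.

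Finally, this ODE inequality is integrated by a standard Gronwall argument: multiplying by the integrating factor $1/s$ produces $(F/s)'(s) \leq E_{con}^{1/2}(s, u)/s^2$, which integrates from $s_0$ to $s$ to yield the stated bound (with the generic constant $C$ absorbing the resulting numerical and $s/s_0$ factors). The main technical obstacle is executing the integration by parts carefully enough that all tangential derivatives $\overline{\partial}_a u$ cancel out of the pairing, leaving only the conformal combination $Ku + u$ on the right-hand side---this is precisely what allows the estimate to close purely in terms of $E_{con}$, with no separate $\overline{\partial}u$ terms remaining.
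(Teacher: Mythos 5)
Your computation of the evolution identity is correct: for $u$ supported in the cone one indeed gets
$\frac{\rd}{\rd s}F(s)^2=2\int_{\Hcal_s}\frac{s^3}{t^4}u^2\,\rd x+2\int_{\Hcal_s}\frac{s}{t^2}u(Ku+u)\,\rd x$,
and your Cauchy--Schwarz bounds are fine (note the paper itself does not prove this proposition but quotes it from \cite{YM0}). The genuine gap is at the end. Integrating $(F/s)'\leq E_{con}^{1/2}/s^2$ gives
$F(s)\leq \frac{s}{s_0}F(s_0)+s\int_{s_0}^s\tau^{-2}E_{con}^{1/2}(\tau,u)\,\rd\tau$,
and the factors $s/s_0$ and $s/\tau$ are \emph{unbounded} on $[s_0,\infty)$; they cannot be absorbed into a generic constant $C$. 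With $E_{con}^{1/2}$ bounded, your inequality permits linear growth of $F(s)$, whereas the proposition asserts at most the logarithmic-type growth $\int_{s_0}^s\tau^{-1}E_{con}^{1/2}\,\rd\tau$. This matters in the paper: in Corollary \ref{cor:l2estimate} the estimate is applied with $E_{con}^{1/2}\lesssim\epsilon\ln\tau$ for all $s\in[s_0,\infty)$, and your bound would only yield $\|(s/t)u\|_{L_f^2(\Hcal_s)}\lesssim\epsilon s$ instead of $\epsilon(\ln s)^2$, which would destroy Theorem \ref{thm:improveddecay}.

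Moreover the defect is not merely in how you integrated: the differential inequality $F'\leq F/s+E_{con}^{1/2}/s$ is itself too weak, since the homogeneous equation $F'=F/s$ has linearly growing solutions, so no further manipulation of it can produce the stated bound. The loss happens when you estimate the positive term $2\int_{\Hcal_s}\frac{s^3}{t^4}u^2\,\rd x=\frac{2}{s}\big\|\frac{s^2}{t^2}u\big\|^2_{L_f^2(\Hcal_s)}$ crudely by $\frac{2}{s}F(s)^2$. This term lives in the interior region $|x|\lesssim s$ of the hyperboloid and must be controlled by the conformal energy itself --- in particular using the tangential part $\sum_a\|s\overline{\partial}_a u\|_{L_f^2(\Hcal_s)}$, which your argument never invokes after the integration by parts --- for instance via a Hardy-type inequality on hyperboloids, or via the argument of Ma in \cite{YM0} that the paper cites. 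Without such an extra ingredient the proof does not close. (A minor additional point: the integration by parts with no boundary terms uses that $u|_{\Hcal_s}$ is compactly supported, which holds for functions supported in $\Kcal$ but should be said.)
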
 
To obtain extra $(t-r)$-decay, we will utilize a classical inequality which shall be formulated as a lemma, and one refers to \cite[page 39]{Sogge} for the proof. 
\begin{lemma}\label{lem:trdecay}
Let $u$ be a sufficiently smooth function, then we have 
\begin{align}
|t-|x|| \sum_{\alpha=0}^2|\partial_{\alpha} u(t,x)| \leq |L_0u(t,x)|+\sum_{a}|L_au(t,x)| + |(\Omega_{1,2}u)(t,x)|. \nonumber
\end{align} 
\end{lemma}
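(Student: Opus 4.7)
The plan is to prove the pointwise inequality by exhibiting explicit \emph{algebraic} identities that write $(t^2-r^2)\partial_\alpha$ as a linear combination (with coefficients $t$, $x_a$, $\pm 1$) of $L_0$, $L_1$, $L_2$, $\Omega_{12}$, and then dividing by the factor $t+r$. Once these identities are established, the three coefficients $t/(t+r)$, $|x_a|/(t+r)$ and $1$ are each bounded by $1$ for $t \geq 0$, so the inequality reduces to the triangle inequality applied to $u$.

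First I would handle the time derivative. A direct expansion of $tL_0 - x^a L_a$ yields
\begin{equation*}
tL_0 - x^a L_a \;=\; t^2\partial_t + tx^a\partial_a \;-\; tx^a\partial_a - r^2\partial_t \;=\; (t^2-r^2)\,\partial_t,
\end{equation*}
so that $(t-r)\,\partial_t u = (t+r)^{-1}\bigl(tL_0 u - x^a L_a u\bigr)$. Since $0\le t/(t+r)\le 1$ and $|x^a|/(t+r)\le 1$, this gives at once $|t-r|\,|\partial_t u| \le |L_0 u| + \sum_a |L_a u|$.

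Next I would treat the spatial derivatives. Expanding $tL_a - x_a L_0$ produces $t^2\partial_a - x_a x^b\partial_b$, which misses the full $(t^2-r^2)\partial_a$ by the ``cross term'' $-x_a x_{a'}\partial_{a'}$ and a missing contribution $-x_{a'}^2\partial_a$; in two dimensions precisely this deficit is supplied by $\pm x_{a'}\Omega_{12}$. Checking the signs in each case, one finds
\begin{align*}
tL_1 - x_1 L_0 + x_2\,\Omega_{12} &= (t^2-r^2)\,\partial_1,\\
tL_2 - x_2 L_0 - x_1\,\Omega_{12} &= (t^2-r^2)\,\partial_2.
\end{align*}
Dividing by $t+r$, applying the triangle inequality, and using $t/(t+r)\le 1$, $|x_a|/(t+r)\le 1$, $|x_{a'}|/(t+r)\le 1$ yields $|t-r|\,|\partial_a u| \le |L_0 u| + |L_a u| + |\Omega_{12} u|$ for $a=1,2$. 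Summing these two bounds together with the one from the time derivative gives the claimed pointwise estimate (up to a harmless absolute constant).

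There is no real analytic obstacle: the argument reduces to three linear-algebra identities of first-order operators, valid wherever $t+r>0$, which is automatic in the applications where we have $t\ge 2$. The only minor subtlety is keeping the sign of the $\Omega_{12}$ contribution straight in the two cases $a=1$ versus $a=2$, which one verifies by brute-force expansion of $x_{a'}\Omega_{12} = x_{a'}(x_1\partial_2 - x_2\partial_1)$.
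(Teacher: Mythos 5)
Your identities are correct (a direct expansion confirms $tL_0 - x^aL_a=(t^2-r^2)\partial_t$, $tL_1-x_1L_0+x_2\Omega_{12}=(t^2-r^2)\partial_1$, and $tL_2-x_2L_0-x_1\Omega_{12}=(t^2-r^2)\partial_2$), and dividing by $t+r$ and bounding the coefficients gives the stated estimate; this is essentially the classical argument, which the paper does not reproduce but simply cites from Sogge. The only caveat is that summing your three individual bounds yields the inequality with an absolute constant rather than the constant $1$ appearing in the lemma as stated; since the paper only ever invokes the lemma through \eqref{eq:betterdecay}, which already carries a constant $C$, this discrepancy is harmless.
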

It is worth mentioning that 
\[
\Omega_{1,2}u(t,x)=(x_1\partial_2 - x_2\partial_1)u(t,x) =(x_1\overline{\partial}_2 - x_2\overline{\partial}_1)u(t,x).
\]
Hence for a function $u$ defined in $\Kcal$, we have $|\Omega_{1,2}u| \leq \sum_a |L_au|$. Thanks to Lemma \ref{lem:trdecay}, we can conclude 
\begin{align}\label{eq:betterdecay}
|t-|x||\sum_{\alpha=0}^2 |\partial_\alpha u(t,x)| \leq C\Big(|L_0 u(t,x)| + \sum_a |L_a u(t,x)| \Big), \ \ \ (t,x)\in \Kcal.
\end{align}
This is the formula to be used in what follows.

Back to the cubic massless Dirac equation:
\begin{align}\label{eq:masslessdirac}
i\gamma_{\mu} \partial_{\mu} \psi = (\psi^*\gamma^0\psi)\psi, \ \ \psi(t_0,x)=\psi_0(x).
\end{align}
In  Section \ref{sec:conclusion}, we have proved 
\begin{align*}
|\psi(t,x)| \lesssim \frac{\epsilon}{t^{1/2}(t-r)^{1/2}},
\end{align*}
provided the initial data $\|\psi_0\|_{H^{N+1}} < \epsilon \leq \epsilon_0$ with $N\geq 2$. Our aim in this part is to show that we can further arrive at
\begin{align}
|\psi(t,x)| \lesssim \frac{ (\ln t)^2}{t^{1/2}(t-r)^{3/2}} \epsilon. \nonumber
\end{align}
To proceed, we shall adopt an idea due to Bournaveas \cite{Bo01}. Let $\psi$ be the global solution to \eqref{eq:masslessdirac},  $\Psi$  be chosen to  verify
\begin{align} \label{eq:newpsi}
\Box \Psi = i\gamma^{\mu}\partial_{\mu} \psi = (\psi^*\gamma^0\psi)\psi, \ \ \Psi(t_0,x) =0, \ \  \partial_t\Psi(t_0,x)= -i \gamma^0 \psi_0.
\end{align}
Then $i\gamma^{\mu}\partial_{\mu} \Psi = \psi$, since $i\gamma^{\mu}\partial_{\mu} (i\gamma^{\mu}\partial_{\mu}\Psi-\psi) =0$, $(i\gamma^{\mu}\partial_{\mu}\Psi-\psi)(t_0) =0$. To estimate $\psi$, it suffices to bound $\partial \Psi$. 

Now let us perform the conformal energy estimate for $\Psi$. 
\begin{proposition} \label{prop:boundsonconformal}
Let the assumptions in Theorem \ref{thm:mainresult} hold, and suppose $\Psi$ solves \eqref{eq:newpsi}, then for all $s\in [s_0,\infty)$, we have 
\begin{align*}
	E_{con}^{1/2}(s,\partial^IL^J\Psi) \lesssim  \epsilon \ln s,
	\qquad
	|I| + |J| \leq N+1.
\end{align*}
\end{proposition}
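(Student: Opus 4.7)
The plan is to apply the conformal energy inequality of Proposition \ref{prop:conformalenergy} to $u = \partial^I L^J \Psi$. The crucial observation is that both the translations $\partial_\alpha$ and the (unmodified) Lorentz boost $L_a = t\partial_a + x_a\partial_t$ commute with the wave operator $\Box$, so that $[\Box, \partial^I L^J] = 0$ and consequently
\[
\Box (\partial^I L^J \Psi) = \partial^I L^J \big[(\psi^*\gamma^0\psi)\psi\big].
\]
Proposition \ref{prop:conformalenergy} then yields
\[
E_{con}^{1/2}(s, \partial^I L^J \Psi) \leq E_{con}^{1/2}(s_0, \partial^I L^J \Psi) + \int_{s_0}^s \tau \big\| \partial^I L^J [(\psi^*\gamma^0\psi)\psi] \big\|_{L_f^2(\Hcal_\tau)} \,\rd \tau,
\]
so the two quantities to control are the initial conformal energy and the spacetime integral of the cubic source.

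For the source term, I would essentially recycle the work already done in Proposition \ref{improvebounds}. Writing $L_a = \widehat{L}_a + \tfrac{1}{2}\gamma^0\gamma^a$ one sees that $|\partial^{I_2} L^{J_2} \psi| \lesssim \sum_{|J'|\le |J_2|} |\partial^{I_2}\widehat{L}^{J'}\psi|$, so Leibniz's rule gives
\[
\big| \partial^I L^J [(\psi^*\gamma^0 \psi)\psi] \big| \lesssim \sum_{\substack{|I_1|+|I_2|\le|I|\\|J_1|+|J'_2|\le|J|}} \big|\partial^{I_1} L^{J_1}(\psi^*\gamma^0\psi)\big| \cdot \big|\partial^{I_2}\widehat{L}^{J'_2} \psi\big|.
\]
Splitting the sum according to whether $|I_2|+|J'_2|\le N-1$ or $|I_1|+|J_1|\le N-1$ and combining Lemma \ref{lem:l2estimate}, Lemma \ref{lem:linftyestimate} with Proposition \ref{l2linftyestimate}, exactly as in the proof of Proposition \ref{improvebounds}, I would obtain
\[
\big\| \partial^I L^J[(\psi^*\gamma^0\psi)\psi] \big\|_{L_f^2(\Hcal_\tau)} \lesssim \epsilon^3 \tau^{-2},
\qquad |I|+|J| \le N+1.
\]

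For the initial slice, I would invoke local well-posedness for the wave equation \eqref{eq:newpsi}. By finite speed of propagation $\Psi$ remains supported in $\{|x| < t-1\}$, so $\Hcal_{s_0}$ meets this support in a spatially bounded region where $t$ is of order one; combined with the $H^{N+1}$ smallness of $\psi_0$, this yields $E_{con}^{1/2}(s_0, \partial^I L^J \Psi) \lesssim \|\psi_0\|_{H^{N+1}} \lesssim \epsilon$ for all $|I|+|J|\le N+1$. Inserting the two bounds into the conformal inequality one concludes
\[
E_{con}^{1/2}(s, \partial^I L^J \Psi) \lesssim \epsilon + \epsilon^3 \int_{s_0}^s \tau^{-1} \,\rd\tau \lesssim \epsilon \ln s,
\]
as claimed.

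The main technical obstacle I anticipate is not analytical but bookkeeping: one must carefully track the conversion between the plain Lorentz boosts $L_a$, which commute with $\Box$ and so appear naturally in the conformal energy framework, and the modified boosts $\widehat{L}_a$, in terms of which all the $L^2$ and $L^\infty$ bounds on the Dirac spinor are phrased. Fortunately this conversion only costs constant matrix factors and introduces lower-order terms that are absorbed in the summation, so the scheme reduces cleanly to bounds already established in Section \ref{sec:conclusion}.
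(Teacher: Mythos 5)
Your proposal is correct and follows essentially the same route as the paper: apply the conformal energy inequality to $\partial^I L^J\Psi$, bound the cubic source by $\epsilon^3\tau^{-2}$ exactly as in Proposition \ref{improvebounds} (the paper likewise invokes that argument, while you spell out the harmless $L_a\leftrightarrow\widehat{L}_a$ conversion), control the initial slice by local well-posedness and compact support, and integrate $\tau^{-1}$ to get the $\ln s$ growth.
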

\begin{proof}
Recall that we have proved the Dirac solution $\psi$ satisfies
\[
\Ecal^D(s,\partial^I\widehat{L}^J\psi) \leq C_1 \epsilon, \ \ \ \forall\,s\in [s_0,\infty), \ \ \ |I|+|J| \leq N+1,
\]
provided the associated initial data $\|\psi_0\|_{H^{N+1}} <\epsilon<\epsilon_0$. On the other hand, it is easy to see 
\[
\Box \partial^I L^J\Psi = \partial^IL^J\big[(\psi^*\gamma^0\psi)\psi \big].
\]
Thanks to the local well-posedness theory of  linear wave equation, one can assert there exists some $\widetilde{C}_0>0$, such that 
 \[
 E_{con}^{1/2}(s_0, \partial^IL^J \Psi) \leq \widetilde{C}_0 \epsilon, \ \ \ \forall\, |I|+|J| \leq N+1.
 \]	
 Then in view of Proposition \ref{prop:conformalenergy}, for any $s\in [s_0,\infty)$, we have 
 \begin{align}
 	E_{con}^{1/2}(s, \partial^I L^J \Psi) \leq E_{con}^{1/2}(s_0, \partial^IL^J \Psi) + \int_{s_0}^s \tau \big\|\partial^IL^J\big[(\psi^*\gamma^0\psi)\psi \big] \big\|_{L_f^2(\Hcal_{\tau})}\, \rd \tau.  \nonumber
 \end{align}
  A similar argument as in Proposition \ref{improvebounds} yields
  \[
\big\|\partial^IL^J\big[(\psi^*\gamma^0\psi)\psi \big] \big\|_{L_f^2(\Hcal_{\tau})} \lesssim (C_1 \epsilon)^3 \tau^{-2}.
  \]
  Then one can see
  \[
  E_{con}^{1/2}(s, \partial^I L^J \Psi) \lesssim \epsilon + \epsilon^3 \ln s, \ \ \forall \ |I| + |J| \leq N+1.
  \]
  Noting that $s>s_0=2$, the required result follows.
\end{proof}

As a consequence of the conformal energy estimate, we have the following.
\begin{corollary} \label{cor:l2estimate}
	For all $|I|+|J| \leq N+1$ and $s\in [s_0,\infty)$, there holds
	\begin{align} 
			\Big\|\frac{s}{t}\partial^I L^J \Psi \Big\|_{L_f^2(\Hcal_s)} & \lesssim \epsilon (\ln s)^2, \label{eq:l2estiamteofpsi} \\
		\Big\|\frac{s}{t}L_0 \partial^I L^J \Psi \Big\|_{L_f^2(\Hcal_s)} & \lesssim \epsilon (\ln s)^2. \label{eq:scalingfield}
	\end{align}
\end{corollary}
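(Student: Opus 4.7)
The plan is to derive both bounds as direct consequences of the conformal energy bound supplied by Proposition \ref{prop:boundsonconformal}. The first estimate \eqref{eq:l2estiamteofpsi} comes straight from Proposition \ref{prop:l2viaconformal} applied to $u=\partial^I L^J \Psi$, while the second \eqref{eq:scalingfield} will follow by writing $L_0$ in terms of $K$ and $\overline{\partial}_a$, whose action on $u$ is already controlled by $E_{con}$, together with the first bound.

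For \eqref{eq:l2estiamteofpsi}, Proposition \ref{prop:l2viaconformal} gives
\[
\Big\|\frac{s}{t}\partial^I L^J\Psi\Big\|_{L_f^2(\Hcal_s)}
\leq \Big\|\frac{s_0}{t}\partial^I L^J\Psi\Big\|_{L_f^2(\Hcal_{s_0})}
+ C\int_{s_0}^s \frac{E_{con}^{1/2}(\tau,\partial^I L^J\Psi)}{\tau}\, \rd\tau.
\]
The initial term on $\Hcal_{s_0}$ is of size $O(\epsilon)$ because $\psi_0$ is compactly supported and standard local well-posedness for the wave equation \eqref{eq:newpsi} provides the Sobolev control of $\Psi$ needed on $\Hcal_{s_0}$. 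Substituting the bound $E_{con}^{1/2}(\tau,\partial^I L^J\Psi)\lesssim \epsilon\ln \tau$ from Proposition \ref{prop:boundsonconformal} and evaluating $\int_{s_0}^s \tau^{-1}\ln\tau\, \rd\tau \lesssim (\ln s)^2$ completes \eqref{eq:l2estiamteofpsi}.

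For \eqref{eq:scalingfield}, I use the algebraic identity $L_0 = K - x^a \overline{\partial}_a$, which is immediate from $K = s\partial_s + 2x^a \overline{\partial}_a$ and $L_0 = s\partial_s + x^a \overline{\partial}_a$. Setting $u=\partial^I L^J\Psi$, I decompose
\[
\frac{s}{t}L_0 u = \frac{s}{t}(Ku+u) - \frac{s}{t}u - \frac{s}{t}x^a \overline{\partial}_a u.
\]
Since $s/t\leq 1$ in $\Kcal$, the first piece is bounded in $L_f^2(\Hcal_s)$ by $\|Ku+u\|_{L_f^2(\Hcal_s)}\leq E_{con}^{1/2}(s,u)\lesssim \epsilon\ln s$. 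For the last piece, $|x^a|\leq t$ in $\Kcal$ yields $|(s/t)x^a \overline{\partial}_a u|\leq s|\overline{\partial}_a u|$, so it is controlled by $\|s\overline{\partial}_a u\|_{L_f^2(\Hcal_s)}\leq E_{con}^{1/2}(s,u)\lesssim \epsilon\ln s$. The middle piece $\|(s/t)u\|_{L_f^2(\Hcal_s)}\lesssim \epsilon(\ln s)^2$ is precisely \eqref{eq:l2estiamteofpsi}, and this is the dominant contribution producing the claimed $(\ln s)^2$ growth.

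The main obstacle is essentially absent since the hard analytic work was already carried out in Proposition \ref{prop:boundsonconformal}; the only (minor) point is to notice the decomposition $L_0 = K - x^a\overline{\partial}_a$, which routes the scaling vector field through quantities already controlled by $E_{con}$, and then to exploit the cone bound $s/t\leq 1$ to absorb the weight in the $Ku+u$ term.
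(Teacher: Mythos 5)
Your proposal is correct and follows essentially the same route as the paper: \eqref{eq:l2estiamteofpsi} via Proposition \ref{prop:l2viaconformal} combined with Proposition \ref{prop:boundsonconformal} and the integral $\int_{s_0}^s \tau^{-1}\ln\tau\,\rd\tau \lesssim (\ln s)^2$, and \eqref{eq:scalingfield} via the identical decomposition $\frac{s}{t}L_0 u = \frac{s}{t}(Ku+u) - \frac{s}{t}u - \frac{s}{t}x^a\overline{\partial}_a u$ with each piece controlled by $E_{con}^{1/2}$ or by \eqref{eq:l2estiamteofpsi}. No gaps to report.
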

\begin{proof}
	For any $s\in [s_0,\infty)$, by Proposition \ref{prop:l2viaconformal} and Proposition \ref{prop:boundsonconformal}, one can obtain 
	\begin{equation} \label{eq:weightl2norm}
	\begin{split}
	\Big\|\frac{s}{t}\partial^I L^J \Psi \Big\|_{L_f^2(\Hcal_s)} &\leq \Big\|\frac{s_0}{t}\partial^I L^J \Psi \Big\|_{L_f^2(\Hcal_{s_0})} + \int_{s_0}^s \frac{E^{1/2}_{con}(\tau, \partial^I L^J \Psi)}{\tau}\,\rd \tau \\
	& \lesssim \epsilon + \epsilon\int_{s_0}^s \frac{\ln \tau}{\tau} \,\rd \tau\\
	& \lesssim \epsilon (\ln s)^2.
	\end{split}
	\end{equation}
	Concerning the estimate of \eqref{eq:scalingfield}, we note that $L_0 = s\partial_s + x^a \overline{\partial}_a$ and 
	\begin{align}
		\frac{s}{t} L_0 \partial^I L^J \Psi = \frac{s}{t} (K+1) \partial^I L^J \Psi - \frac{s}{t}  \partial^I L^J \Psi - \frac{s}{t} x^a\overline{\partial}_a \partial^I L^J \Psi.  \nonumber
	\end{align}
	This implies
	\begin{align*}
		&\Big\|\frac{s}{t} L_0 \partial^I L^J \Psi  \Big\|_{L_f^2(\Hcal_s)} \\
		& \leq    \Big\|(K+1)\partial^I L^J \Psi  \Big\|_{L_f^2(\Hcal_s)} + \Big\|\frac{s}{t}\partial^I L^J \Psi \Big\|_{L_f^2(\Hcal_s)}  + \sum_a\Big\|s \overline{\partial}_a\partial^I L^J \Psi \Big\|_{L_f^2(\Hcal_s)} \\
		&\lesssim E^{1/2}_{con}(s,\partial^I L^J\Psi) + \epsilon (\ln s)^2 \\
		&\lesssim \epsilon (\ln s)^2,
	\end{align*}
	where we used Proposition \ref{prop:boundsonconformal} and \eqref{eq:weightl2norm}. The proof is completed.
\end{proof}

Now we are ready to demonstrate the proof of Theorem \ref{thm:improveddecay}.  
\begin{proof}[Proof of Theorem \ref{thm:improveddecay}]
By Corollary \ref{cor:weightl2estimate} and  Corollary \ref{cor:l2estimate}, we can find 
\[
\sup_{\Hcal_s} s |L_0\Psi(t,x)| \lesssim \sum_{|J|\leq 2}\Big\|\frac{s}{t}L_0 L^J \Psi \Big\|_{L_f^2(\Hcal_s)} \lesssim \epsilon (\ln s)^2,
\]
where we also used $[L_0,L_a]=0,\ a=1,2$.  Similarly, 
\begin{align}
	\sup_{\Hcal_s} s|L_a\Psi(t,x)|  \lesssim \epsilon (\ln s)^2. \nonumber
\end{align}
Owing to \eqref{eq:betterdecay} and the fact ${\rm supp}\,\Psi \subset \Kcal$, we finally obtain
\begin{align*}
(t-r)\sum_{\alpha=0}^2\big|	\partial_{\alpha}\Psi(t,x) \big| & \leq C\Big( |L_0 \Psi(t,x)| + \sum_a |L_a \Psi(t,x)| \Big) \\
& \lesssim \epsilon s^{-1} (\ln s)^2.
\end{align*}
This immediately yields 
\begin{align}
	|\psi(t,x)| \lesssim  \frac{ [\ln (t(t-r))]^2}{t^{1/2} (t-r)^{3/2}} \epsilon\lesssim  \frac{ (\ln t)^2}{t^{1/2} (t-r)^{3/2}} \epsilon  , \nonumber
\end{align}
given that $i\gamma^{\mu} \partial_{\mu} \Psi = \psi$ and $s\sim t^{1/2}(t-r)^{1/2}$. Combining the decay result in Theorem \ref{thm:mainresult}, we see 
\begin{align*}
	|\psi(t,x)| \lesssim  \min \Big\{\frac{1}{t^{1/2}(t-r)^{1/2}}, \frac{ (\ln t)^2}{t^{1/2} (t-r)^{3/2}} \Big\}\epsilon.
\end{align*}
This concludes the proof.
\end{proof}

 \section*{Acknowledgments} 
Both authors are grateful to Prof. Zhen Lei (Fudan University) for his constant support and encouragement.
The author S.D. owes thanks to Dr. Zoe Wyatt (Cambrige University) for many discussions.
The authors would also like to thank Dr. Yunlong Zang (Yangzhou University) for many discussions.
The author S.D. was partially supported by the China Postdoctoral Science Foundation, with grant number 2021M690702.



\end{document}